\DeclareMathOperator{\su}{supp}
\def\e{\epsilon}
\numberwithin{equation}{section}              
\newtheorem{theorem}{Theorem}[section]
\newtheorem{lemma}{Lemma}[section]
\newtheorem{proposition}{Proposition}[section]
\newtheorem*{proposition*}{Proposition}
\newtheorem{corollary}{Corollary}[section]
\newtheorem*{corollary*}{Corollary}
\newtheorem{definition}{Definition}[section]
\newtheorem*{definitions*}{Definitions}
\newtheorem*{conjecture*}{\bf Conjecture}
\newtheorem*{example*}{\bf Example}
\theoremstyle{remark}
\newtheorem{remark}{\bf Remark}[section]
\begin{document}
\date{}                                     
\title{The modified Camassa-Holm equation in Lagrangian coordinates}
	
\author[1,2]{Yu Gao\thanks{yugao@hit.edu.cn}}
\author[2]{Jian-Guo Liu\thanks{jliu@phy.duke.edu}}
\affil[1]{Department of Mathematics, Harbin Institute of Technology, Harbin, 150001, P.R. China.}
\affil[2]{Department of Mathematics and Department of Physics, Duke University, Durham, NC 27708, USA.}

	\maketitle

\begin{abstract}
In this paper, we  study the modified  Camassa-Holm (mCH) equation in Lagrangian coordinates.  For some initial data $m_0$, we show that classical solutions to this equation blow up in finite time $T_{max}$.  Before $T_{max}$,  existence and uniqueness of classical solutions are established. Lifespan for classical solutions is obtained: $T_{max}\geq \frac{1}{||m_0||_{L^\infty}||m_0||_{L^1}}.$  And there is a unique solution $X(\xi,t)$ to the Lagrange dynamics which is a strictly monotonic function of $\xi$ for any $t\in[0,T_{max})$:  $X_\xi(\cdot,t)>0$.  As $t$ approaching  $T_{max}$, we prove that classical solution $m(\cdot ,t)$ in Eulerian coordinate has a unique limit $m(\cdot,T_{max})$  in Radon measure space and there is a point $\xi_0$ such that $X_\xi(\xi_0,T_{max})=0$ which means $T_{max}$ is an onset time of collision of  characteristics.  We also show that in some cases peakons are formed at $T_{max}$.  After $T_{max}$, we regularize the Lagrange dynamics to prove global existence of weak solutions $m$ in Radon measure space.
\end{abstract}

\section{Introduction}
In this work, we consider the following nonlinear partial differential equation in $\mathbb{R}$:
\begin{align}\label{mCH}
m_t+[(u^2-u^2_x)m]_x=0,\quad m=u-u_{xx},\quad x\in\mathbb{R},~~t>0,
\end{align}
 subject to an initial condition
\begin{align}\label{initial m}
m(x,0)=m_0(x).
\end{align}
This equation is referred to as the modified Camassa-Holm(mCH) eqaution with cubic nonlinearity, which was introduced as a new integrable system by several different researchers \cite{Fokas,Fuchssteiner,Olever,Qiao2}. It has a bi-Hamiltonian structure \cite{GuiLiuOlverQu,Olever} and a Lax-pair \cite{Qiao2}. 
Equation \eqref{mCH} also has solitary wave solutions of the form \cite{GuiLiuOlverQu}:
$$u(x,t)=pG(x-x(t)),\quad m(x,t)=p\delta(x-x(t)),~\textrm{ and }~x(t)=\frac{1}{6}p^2t,$$ 
where  $p$ is a constant representing the amplitude of the soliton and $G(x)=\frac{1}{2}e^{-|x|}$  is the  fundamental solution  for the Helmholtz operator $1-\partial_{xx}$. 
With this fundamental solution $G$, we have the following relation between functions $u$ and $m$:
\begin{align*}
u(x,t)=G\ast m=\int_{\mathbb{R}}G(x-y)m(y,t)dy.
\end{align*}
Moreover, global existence of $N$-peakon weak solutions of the following form was obtained in \cite{GaoLiu}:
$$u^N(x,t)=\sum_{i=1}^Np_iG(x-x_i(t)),~~m^N(x,t)=\sum_{i=1}^Np_i\delta(x-x_i(t)).$$

In the present paper, we study local well-posedness for classical solutions and global weak solutions to \eqref{mCH} in Lagrangian coordinates.  Below we introduce the Lagrange dynamics for the mCH equation. To this end, we first review the Lagrange dynamics for incompressible $2D$ Euler equation:
\begin{align*}
\left\{
\begin{array}{ll}
\omega_t(x,t)+\nabla\cdot\big(u(x,t)\omega(x,t)\big)=0,\quad (x,t)\in\mathbb{R}^2\times[0,\infty), \\
\omega(x,0)=\omega_0(x),
\end{array}
\right.
\end{align*}
where the velocity $u$ is determined from the vorticity $\omega$ by the Biot-Savart law
$$u(x,t)=\int_{\mathbb{R}^2}K_2(x-y)\omega(y,t)dy,\quad x\in\mathbb{R}^2,$$
involving the kernel $K_2(x)=(2\pi |x|^2)^{-1}(-x_2,x_1).$
Assume $X(\xi,t)$ is the flow map generated by the velocity field $u(x,t)$:
\begin{align*}
\left\{
\begin{array}{ll}
\dot{X}(\xi,t)=u(X(\xi,t),t), \quad \xi\in\mathbb{R}^2, ~~t>0,\\
X(\xi,0)=\xi.
\end{array}
\right.
\end{align*}
By the incompressible property $\nabla\cdot u=0$, we know
\begin{align}\label{eq:incompressEuler}
\omega(X(\xi,t),t)=\omega_0(\xi).
\end{align}
The $2D$ Euler equation can be rewritten in the Lagrange dynamics
\begin{align*}
\left\{
\begin{array}{ll}
\dot{X}(\xi,t)=u(X(\xi,t),t),~~X(\xi,0)=\xi\in\mathbb{R}^2,~~t>0,\\
\omega(X(\xi,t),t)=\omega_0(\xi),\\
u(x,t)=(K_2\ast \omega)(x,t).
\end{array}
\right.
\end{align*}

Comparing with the incompressible $2D$ Euler equation, assume $X(\xi,t)$ is the flow map for the mCH equation generated by the velocity field $u^2-u_x^2$:
$$\dot{X}(\xi,t)=(u^2-u_x^2)(X(\xi,t),t),~~X(\xi,0)=\xi\in\mathbb{R},~t>0.$$
In contrast with \eqref{eq:incompressEuler}, we have the following property for the mCH equation:
\begin{align*}
m(X(\xi,t),t)X_\xi(\xi,t)=m_0(\xi).
\end{align*}
Combining the above two equalities, the mCH equation \eqref{mCH} can be rewritten in the Lagrange dynamics:
\begin{align}\label{eq:lagrange1}
\begin{cases}
\dot{X}(\xi,t)=(u^2-u_x^2)(X(\xi,t),t),~~X(\xi,0)=\xi\in\mathbb{R},~t>0,\\
m(X(\xi,t),t)X_\xi(\xi,t)=m_0(\xi),\\
u(x,t)=(G\ast m)(x,t).
\end{cases}
\end{align}
Changing of variable gives
\begin{align*}
u(x,t)&=\int_{\mathbb{R}}G(x-y)m(y,t)dy=\int_{\mathbb{R}}G(x-X(\theta,t))m(X(\theta,t))X_\theta(\theta,t)d\theta\\
&=\int_{\mathbb{R}}G(x-X(\theta,t))m_0(\theta)d\theta.
\end{align*}
Set
\begin{align}\label{UXT}
U(x,t):&=u^2(x,t)-u_x^2(x,t)\nonumber\\
&=\left(\int_{\mathbb{R}}G(x-X(\theta,t))m_0(\theta)d\theta\right)^2-\left(\int_{\mathbb{R}}G_x(x-X(\theta,t))m_0(\theta)d\theta\right)^2.
\end{align}
Then, Equation \eqref{eq:lagrange1} can be rewritten as
\begin{align}\label{Lagran dynamics}
\left\{
\begin{array}{ll}
\dot{X}(\xi,t)=U(X(\xi,t),t),\\
X(\xi,0)=\xi\in\mathbb{R}.
\end{array}
\right.
\end{align}
When $m_0\in L^1(\mathbb{R})$, the following useful properties can be easily obtained:
\begin{equation}\label{uuxUbound}
|u(x,t)|\leq\frac{1}{2}||m_0||_{L^1},\quad |u_x(x,t)|\leq\frac{1}{2}||m_0||_{L^1}~\textrm{ and }~|U(x,t)|\leq\frac{1}{2}||m_0||_{L^1}^2.
\end{equation}

In the rest of this paper, we assume the initial $m_0$ satisfying $\su\{m_0\}\subset(-L,L)$ for some constant $L>0$. Next, we summarize our  main results in four theorems. 
\begin{theorem}\label{maintheorem1}
Suppose $m_0\in C_c^k(-L,L)$ $(k\in \mathbb{N}, k\geq1)$. Then, there exists a unique maximum existence time $T_{max}\leq+\infty$ such that
 Lagrange dynamics \eqref{Lagran dynamics} has a unique solution
$$X\in C_1^{k+1}([-L,L]\times[0,T_{max})),$$
which satisfies
\begin{align*}
X_\xi(\xi,t)>0~\textrm{ for }~(\xi,t)\in[-L,L]\times[0,T_{max}).
\end{align*}
(The solution space is defined by \eqref{functionspace}.) The mCH equation \eqref{mCH}-\eqref{initial m} has a unique classical solution
$$u\in C^{k+2}_1(\mathbb{R}\times[0,T_{max})),\quad m\in C^{k}_1(\mathbb{R}\times[0,T_{max})),$$
which can be represented by $X(\xi,t)$ as
\begin{align}\label{eq:umdef}
u(x,t)=\int_{-L}^LG(x-X(\theta,t))m_0(\theta)d\theta~\textrm{ and }~m(x,t)=\int_{-L}^L\delta(x-X(\theta,t))m_0(\theta)d\theta.
\end{align}
Moreover, $m$ satisfies:
\begin{align}\label{eq:support}
\su\{m(\cdot ,t)\}\subset(-L,L)~\textrm{ for }~t\in[0,T_{max}).
\end{align}

If $T_{max}<+\infty$, then the following holds:\\
$\mathrm{(i)}$ We have
$$X\in C_1^{k+1}([-L,L]\times[0,T_{max}]).$$
$\mathrm{(ii)}$
The following equivalent statements hold:

\emph{(a)}
$$\limsup_{t\rightarrow T_{max}}||m(\cdot ,t)||_{L^\infty}=+\infty,$$

\emph{(b)}
\begin{gather*}
\left\{\begin{split}
&X_\xi(\xi,t)>0~\textrm{ for }~(\xi,t)\in[-L,L]\times[0,T_{max});\\
&\min_{\xi\in[-L,L]}X_\xi(\xi,T_{max})=0.
\end{split}
\right.
\end{gather*}

\emph{(c)}
\begin{gather*}
\liminf_{t\rightarrow T_{max}}\Big\{\inf_{\xi\in[-L,L]}\int_0^t(mu_x)(X(\xi,s),s)ds\Big\}=-\infty,
\end{gather*}

\emph{(d)}
\begin{equation*}
\liminf_{t\rightarrow T_{max}}\Big\{\inf_{x\in\mathbb{R}}(mu_x)(x,t)\Big\}=-\infty,
\end{equation*}

\emph{(e)}
\begin{equation*}
\limsup_{t\rightarrow T_{max}}||m(\cdot ,t)||_{W^{1,p}}=+\infty,~\textrm{ for }p\geq1,
\end{equation*}

\emph{(f)}
\begin{align*}
\int_0^{T_{max}}||m(\cdot ,t)||_{L^\infty}dt=+\infty.
\end{align*}
$\mathrm{(iii)}$
There exists a unique function $u(\cdot ,T_{max})$ such that
\begin{align*}
\lim_{t\rightarrow T_{max}}u(x,t)=u(x,T_{max}),\quad \lim_{t\rightarrow T_{max}}u_x(x,t)=u_x(x,T_{max})\textrm{ for every }x\in\mathbb{R}.
\end{align*}
Moreover,  for any $t\in[0,T_{max}]$ we have
$$u(\cdot,t),~~u_x(\cdot,t)\in BV(\mathbb{R})$$
and
\begin{align*}
\mathrm{Tot.Var.}\{u(\cdot, t)\}\leq M_1,\quad \mathrm{Tot.Var.}\{u_x(\cdot, t)\}\leq 2M_1.
\end{align*}
Here, $BV(\mathbb{R})$ is the space of functions with bounded variation (see definition \ref{BV}).\\
$\mathrm{(iv)}$
There exists a unique $m(\cdot ,T_{max})\in\mathcal{M}(\mathbb{R})$ (Radon measure space on $\mathbb{R}$) such that
$$m(\cdot ,t)\stackrel{\ast}{\rightharpoonup} m(\cdot , T_{max})~\textrm{ in }~\mathcal{M}(\mathbb{R}),~\textrm{ as }~t\rightarrow T_{max}.$$

\end{theorem}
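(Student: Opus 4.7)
The plan for part (iv) is to define $m(\cdot, T_{max})$ as the pushforward of the fixed reference measure $m_0(\xi)\, d\xi$ under the extended flow map $X(\cdot, T_{max})$, and then to reduce weak-* convergence to dominated convergence in the $\xi$-variable, leveraging the pushforward identity that $m(\cdot, t)$ already satisfies for $t < T_{max}$.

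\textbf{Pushforward identity and candidate limit.} Formula \eqref{eq:umdef} gives, for every $t < T_{max}$ and every $\phi \in C_0(\mathbb{R})$,
$$\int_{\mathbb{R}} \phi(x)\, m(x, t)\, dx \;=\; \int_{-L}^L \phi(X(\theta, t))\, m_0(\theta)\, d\theta.$$
Part (i) of the theorem asserts $X \in C_1^{k+1}([-L,L]\times[0,T_{max}])$, so the map $\theta \mapsto X(\theta, T_{max})$ is well-defined and continuous on $[-L,L]$. I would therefore \emph{define}
$$\langle m(\cdot, T_{max}), \phi\rangle \,:=\, \int_{-L}^L \phi(X(\theta, T_{max}))\, m_0(\theta)\, d\theta, \qquad \phi \in C_0(\mathbb{R}).$$
This is a bounded linear functional on $C_0(\mathbb{R})$ of norm at most $\|m_0\|_{L^1}$, so the Riesz representation theorem produces a unique finite signed Radon measure $m(\cdot, T_{max}) \in \mathcal{M}(\mathbb{R})$ representing it.

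\textbf{Weak-* convergence.} Fix $\phi \in C_0(\mathbb{R})$. By part (i), $X(\theta, t) \to X(\theta, T_{max})$ as $t \to T_{max}$ for each $\theta \in [-L,L]$, hence $\phi(X(\theta, t))\, m_0(\theta) \to \phi(X(\theta, T_{max}))\, m_0(\theta)$ pointwise on $[-L,L]$, and the integrands are dominated by $\|\phi\|_\infty |m_0(\theta)|$, which is integrable on $[-L,L]$. Dominated convergence then yields
$$\int_{\mathbb{R}} \phi(x)\, m(x, t)\, dx \;\longrightarrow\; \langle m(\cdot, T_{max}), \phi\rangle,$$
which is precisely the desired weak-* convergence $m(\cdot, t) \stackrel{\ast}{\rightharpoonup} m(\cdot, T_{max})$. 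Uniqueness of the limit is automatic since weak-* limits in $\mathcal{M}(\mathbb{R})$ are unique.

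\textbf{Main obstacle.} Conditional on the earlier parts of the theorem, the proof of (iv) is quite routine; the real work lies in part (i), which must extend the Lagrangian flow $X$ continuously to the closed time interval $[0, T_{max}]$ together with control of $k+1$ spatial derivatives, despite the possible degeneracy $\min_\xi X_\xi(\xi, T_{max}) = 0$ allowed by (ii)(b). That extension rests on the uniform bound $|U|\le \tfrac12\|m_0\|_{L^1}^2$ from \eqref{uuxUbound} (which makes $X(\xi,\cdot)$ uniformly Lipschitz in $t$) plus a careful bookkeeping of $\xi$-derivatives through the formula $\dot X_\xi = 2(m u_x)(X,t)\, X_\xi$; once (i) is in hand, the pushforward argument above delivers (iv) without further difficulty.
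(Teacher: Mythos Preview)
Your argument for (iv) is correct, but it takes a different route from the paper. The paper defines the limiting measure \emph{via} part (iii): since $u_x(\cdot,T_{max})\in BV(\mathbb{R})$, its distributional derivative $u_{xx}(\cdot,T_{max})$ is a Radon measure, and one sets $m(\cdot,T_{max}):=u(\cdot,T_{max})-u_{xx}(\cdot,T_{max})$; weak-$\ast$ convergence then follows by writing $\int m\phi = \int u\phi + u_x\phi_x$ and passing to the limit using the pointwise convergence of $u$ and $u_x$ from (iii). The pushforward identity $\int \phi\,dm(\cdot,T_{max})=\int_{-L}^L m_0(\xi)\phi(X(\xi,T_{max}))\,d\xi$ is then derived as a \emph{consequence}. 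You instead take that identity as the \emph{definition} of the limit and apply dominated convergence directly in Lagrangian coordinates, bypassing (iii) entirely. Your approach is more economical for (iv) standing alone; the paper's approach has the advantage of making the relation $m(\cdot,T_{max})=(1-\partial_{xx})u(\cdot,T_{max})$ explicit from the outset, which is what the subsequent analysis of peakon formation relies on. Your remark that the real work lies in (i) is accurate, and your sketch of how (i) is obtained matches the paper's method.
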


(a) and (b) tells us that $T_{max}$ is an onset time of collisions of characteristics. \eqref{eq:support} implies that the supports for classical solutions will not change.

Our another main theorem is about finite time blow-up behaviors and the lifespan of classical solutions. Let $T_{max}(m_0)$ be the maximum existence time of the classical solution to the mCH equation subject to an initial condition $m_0$. Then we have the following theorem about lifespan for classical solutions.
\begin{theorem}\label{lifespan}
Assume $m_0\in C_c^k(\mathbb{R})$ $(k\in \mathbb{N}, k\geq1)$.\\
$\mathrm{(i)}$
We have
\begin{align}\label{lifespanlowbound}
T_{max}(m_0)\geq\frac{1}{||m_0||_{L^\infty}||m_0||_{L^1}}.
\end{align}
$\mathrm{(ii)}$
If there exists  $\overline{\xi}\in[-L,L]$ such that
\begin{align}\label{initialu0}
m_0(\overline{\xi})\partial_xu_0(\overline{\xi})<0,\quad |m_0(\overline{\xi})|(\partial_xu_0(\overline{\xi}))^2>\frac{1}{2}||m_0||^3_{L^1},
\end{align}
then the classical solution to the mCH equation will blow up in finite time.
Moreover, for any $\epsilon>0$ we have
\begin{align}\label{lifespanupbound}
\frac{1}{||m_0||_{L^\infty}||m_0||_{L^1}}\cdot \frac{1}{\epsilon^2}\leq T_{max}(\epsilon m_0)\leq \frac{1}{||m_0||^2_{L^1}}\cdot\frac{1}{\epsilon^2}.
\end{align}

\end{theorem}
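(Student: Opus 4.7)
The plan is to couple two ODEs along the characteristic issuing from $\bar\xi$: one for $X_\xi$ and one for $q(\xi,t):=u_x(X(\xi,t),t)$. Differentiating $\dot X(\xi,t)=U(X(\xi,t),t)$ in $\xi$ and using $U_x=2u_xm$ together with the transport identity $m(X,t)X_\xi=m_0$ yields the key identity
\[
\dot X_\xi(\xi,t)\;=\;2\,m_0(\xi)\,q(\xi,t).
\]
This, together with the conservation $\|m(\cdot,t)\|_{L^1}=\|m_0\|_{L^1}$ (immediate from the change of variables $x=X(\xi,t)$) and the pointwise bounds \eqref{uuxUbound}, drives the entire argument.

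For part (i), I would plug $|q|\le\tfrac12\|m_0\|_{L^1}$ from \eqref{uuxUbound} into the key identity to get $|\dot X_\xi(\xi,t)|\le\|m_0\|_{L^\infty}\|m_0\|_{L^1}$ uniformly in $\xi$, whence $X_\xi(\xi,t)\ge 1-\|m_0\|_{L^\infty}\|m_0\|_{L^1}\,t$. By the equivalent blow-up criterion Theorem~\ref{maintheorem1}(ii)(b), as long as this lower bound stays positive the classical solution persists, which yields \eqref{lifespanlowbound}.

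For part (ii) the new ingredient is an ODE for $q$ itself. Applying $G*$ to $m_t+(Um)_x=0$ gives $u_t=-P_x$ with $P:=G*(Um)$, and the Helmholtz identity $G_{xx}=G-\delta$ then upgrades this to $u_{xt}=Um-P$. Composing with the flow and using $m+u_{xx}=u$ collapses the cross-terms to
\[
\dot q(\xi,t)\;=\;u(X)\,U(X)-P(X),
\]
after which \eqref{uuxUbound} together with $\|Um\|_{L^1}\le\tfrac12\|m_0\|_{L^1}^3$ and $\|G\|_{L^\infty}=\tfrac12$ deliver the clean estimate $|\dot q|\le\tfrac12\|m_0\|_{L^1}^3$. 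Assuming WLOG that $m_0(\bar\xi)>0$ (otherwise replace $m$ by $-m$, since the mCH equation is odd in $m$) and writing $q_0:=\partial_xu_0(\bar\xi)<0$, the linear control $q(\bar\xi,t)\le q_0+\tfrac12\|m_0\|_{L^1}^3\,t$ fed back through the key identity gives
\[
X_\xi(\bar\xi,t)\;\le\;1+2m_0(\bar\xi)q_0\,t+\tfrac12\,m_0(\bar\xi)\|m_0\|_{L^1}^3\,t^2.
\]
The hypothesis $m_0(\bar\xi)q_0^{2}>\tfrac12\|m_0\|_{L^1}^3$ makes the discriminant of this quadratic strictly positive; rationalising its smaller root $t^{\ast}$ yields $t^{\ast}\le 1/|m_0(\bar\xi)q_0|$, which forces $X_\xi(\bar\xi,\cdot)$ to vanish by time $t^{\ast}$, proving finite-time blow-up with $T_{\max}\le 1/|m_0(\bar\xi)q_0|$.

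The upper bound in \eqref{lifespanupbound} then follows by sharpening this constant. Dividing $|m_0(\bar\xi)|q_0^{2}>\tfrac12\|m_0\|_{L^1}^3$ by $|q_0|$ and using $|q_0|\le\tfrac12\|m_0\|_{L^1}$ produces $|m_0(\bar\xi)q_0|>\|m_0\|_{L^1}^2$, hence $T_{\max}(m_0)\le 1/\|m_0\|_{L^1}^2$. The blow-up hypothesis is scaling-invariant (both sides scale as $\epsilon^3$), so the same argument applied to $\epsilon m_0$ yields the upper bound in \eqref{lifespanupbound}, while the lower bound is \eqref{lifespanlowbound} specialised to $\epsilon m_0$. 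The only step I expect to require real care is the derivation of $\dot q=uU-P$, where one must track which convolutions absorb a $\partial_x$ and verify the signs in $u_{xt}=Um-P$; after that the blow-up is a routine convex-quadratic comparison and the scaling is bookkeeping.
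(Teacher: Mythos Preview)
Your proof is correct and structurally identical to the paper's: the key identity $\dot X_\xi(\xi,t)=2m_0(\xi)q(\xi,t)$ is Proposition~\ref{preparelemma1}(i), the bound $|\dot q|\le\tfrac12\|m_0\|_{L^1}^3$ is Lemma~\ref{preparelemma2}, the quadratic comparison and its roots are Theorem~\ref{finite time blow up theorem} (with the case $m_0(\bar\xi)<0$ handled explicitly as Theorem~\ref{finite time blow up theorem 2} rather than by the odd symmetry you invoke), and the scaling argument is exactly the paper's proof of Theorem~\ref{lifespan}.

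The one place you genuinely streamline things is the derivation of $|\dot q|\le\tfrac12 M_1^3$: your identity $\dot q=uU-G\ast(Um)$, obtained from $u_{xt}=Um-P$ and the cancellation $m+u_{xx}=u$, is considerably shorter than the paper's route, which expands $u_{xt}+(u^2-u_x^2)u_{xx}$ through a chain of integrations by parts before reaching the same bound. Your bound on the smaller root via rationalisation, $t^\ast\le 1/|m_0(\bar\xi)q_0|$, combined with $|m_0(\bar\xi)q_0|>M_1^2$ (from the hypothesis and $|q_0|\le\tfrac12 M_1$), is equivalent to the paper's direct estimate $t^\ast\le 2|q_0|/M_1^3\le 1/M_1^2$; both give the same upper bound in~\eqref{lifespanupbound}.
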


This theorem implies that there are smooth initial data with arbitrary small support and  arbitrary small $C^k(\mathbb{R})$-norm, $k\in \mathbb{N}$, for which the classical solution does not exist globally.

Next, we give a theorem to show the formation of peakons at finite blow-up time $T_{max}$. From Theorem \ref{maintheorem1}, we know there is a point $\xi_0\in[-L,L]$ such that $X_\xi(\xi_0,T_{max})=0$. Set
$$F_{T_{max}}:=\{X(\xi,T_{max}):\xi\in [-L,L],~~X_\xi(\xi,T_{max})=0\}.$$
 For any $x\in F_{T_{max}}$, because $X_\xi(\cdot,T_{max})\ge0$, we know that $X^{-1}(x,T_{max})$ is either a single point or a closed interval.
Denote
$$\widehat{F}_{T_{max}}:=\{x\in F_{T_{max}}:X^{-1}(x,T_{max})=[\xi_1,\xi_2] ~\textrm{ for some }~\xi_1<\xi_2\}.$$
The figure below describe these singular points.
\begin{figure}[H]
\begin{center}
\includegraphics[width=0.5\textwidth]{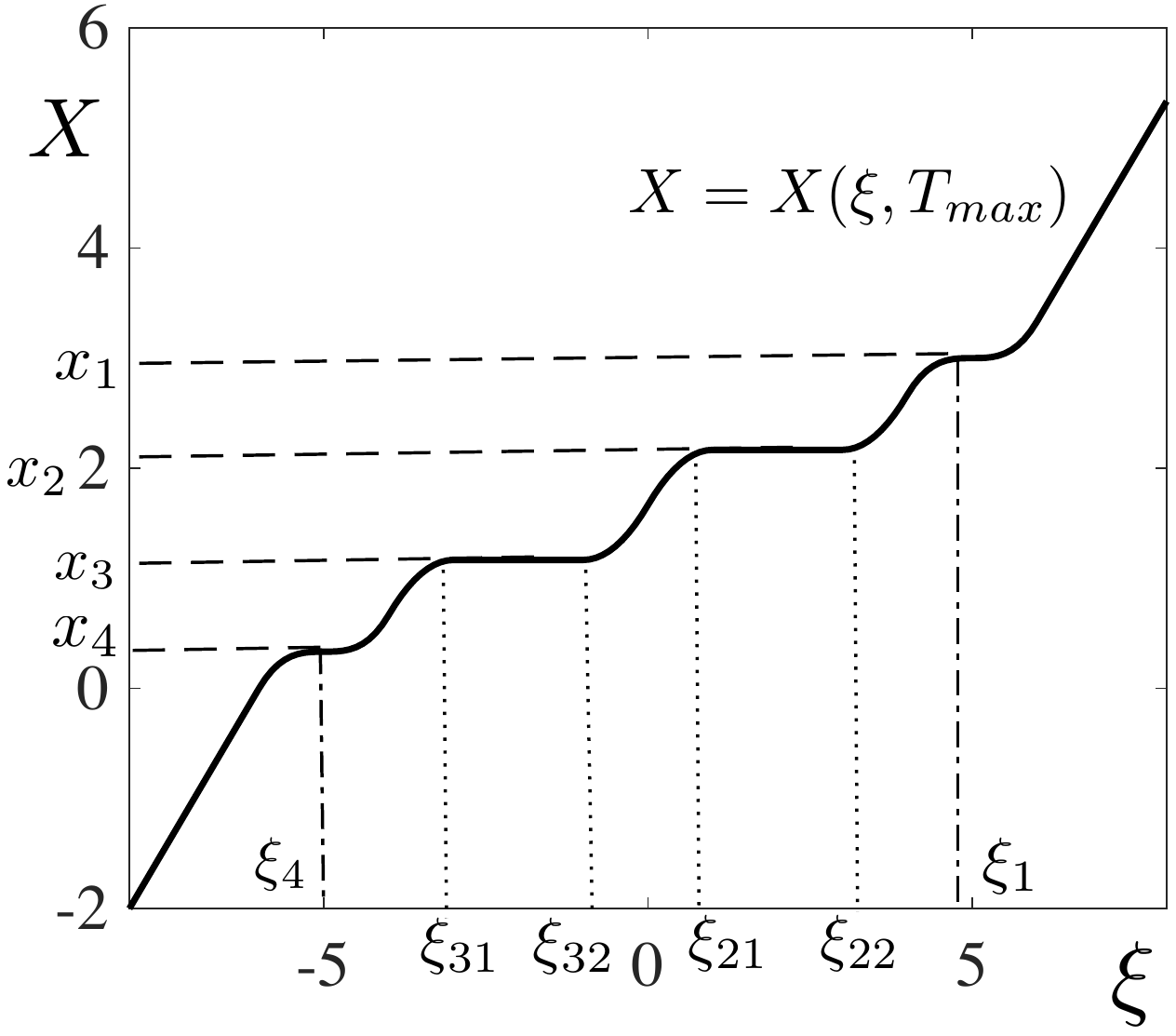}
\end{center}
\caption{At $T_{max}$, $X_\xi(\cdot,T_{max})\geq0$ and $X_\xi(\xi,T_{max})=0$ for $\xi\in\{\xi_1,\xi_4\}\cup[\xi_{21},\xi_{22}]\cup[\xi_{31},\xi_{32}]$. $F_{T_{max}}=\{x_1,x_2,x_3,x_4\}$ and $\widehat{F}_{T_{max}}=\{x_2,x_3\}$.}
\label{fig:step}
\end{figure}
For $x\in \widehat{F}_{T_{max}}$ and $X^{-1}(x,T_{max})=[\xi_1,\xi_2]$, we show that $m_0$ will not change sign in $[\xi_1,\xi_2]$ (see Proposition \ref{discontinuousset}). 
Hence, $\int_{\xi_1}^{\xi_2}m_0(\xi)d\xi\neq 0$.
We have the following theorem.
\begin{theorem}\label{maintheorem3}
Assume $F_{T_{max}}=\{x_i\}_{i=1}^{N_1}$  and  $\widehat{F}_{T_{max}}=\{x_i\}_{i=1}^N$ ($N\leq N_1$). Let $X^{-1}(x_i,T_{max})=[\xi_{i1},\xi_{i2}]$ and $p_i=\int_{\xi_{i1}}^{\xi_{i2}}m_0(\xi)d\xi$ for $1\leq i\leq N$. Then
\begin{align*}
m(x,T_{max})=m_1(x)+\sum_{i=1}^Np_i\delta(x-x_i)
\end{align*}
where  $m_1\in L^1(\mathbb{R})$ is given by \eqref{definitionofm1}.
\end{theorem}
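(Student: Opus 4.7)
The plan is to compute the weak-$*$ limit $m(\cdot, T_{max})$ directly from the Lagrangian representation. For any $\varphi \in C_c(\mathbb{R})$, formula \eqref{eq:umdef} yields
\begin{align*}
\int_{\mathbb{R}} \varphi(x)\, m(x,t)\, dx = \int_{-L}^L \varphi(X(\theta,t))\, m_0(\theta)\, d\theta, \qquad t\in[0,T_{max}).
\end{align*}
By Theorem \ref{maintheorem1}(i), $X(\cdot,t) \to X(\cdot,T_{max})$ uniformly on $[-L,L]$, so dominated convergence together with the weak-$*$ convergence in Theorem \ref{maintheorem1}(iv) identifies
\begin{align*}
\langle m(\cdot, T_{max}), \varphi \rangle = \int_{-L}^L \varphi(X(\theta, T_{max}))\, m_0(\theta)\, d\theta.
\end{align*}

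Next I would partition $[-L,L]$ into the collapsing intervals $I_i := [\xi_{i1}, \xi_{i2}]$ for $1 \leq i \leq N$ and the complement $A := [-L,L] \setminus \bigcup_{i=1}^N I_i$. On each $I_i$ the map $X(\cdot, T_{max})$ is constant equal to $x_i$, contributing $p_i \varphi(x_i)$ and producing the atomic part $\sum_{i=1}^N p_i \delta_{x_i}$. On $A$ the map $X(\cdot, T_{max})$ is strictly monotonic, because any subinterval on which it were constant would, by the definition of $\widehat{F}_{T_{max}}$, have to be contained in some $I_i$; moreover, $X(\cdot, T_{max})$ is $C^1$ with $X_\xi(\cdot, T_{max}) > 0$ off the finite set of $\xi \in A$ mapping into $F_{T_{max}} \setminus \widehat{F}_{T_{max}}$.

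The absolutely continuous part is then obtained by changing variables $x = X(\theta, T_{max})$. Concretely I would take
\begin{align*}
m_1(x) := \frac{m_0(X^{-1}(x, T_{max}))}{X_\xi(X^{-1}(x, T_{max}), T_{max})}\, \mathbf{1}_{X(A, T_{max})}(x),
\end{align*}
extended by zero at the finitely many image points where the denominator vanishes (this should match the formula \eqref{definitionofm1} referenced in the statement). Since $X(\cdot, T_{max})$ is Lipschitz and monotonic on $A$, the standard change-of-variables formula then gives both $\int_{\mathbb{R}} |m_1(x)|\, dx = \int_A |m_0(\theta)|\, d\theta \leq \|m_0\|_{L^1}$ (so $m_1 \in L^1(\mathbb{R})$) and $\int_A \varphi(X(\theta, T_{max}))\, m_0(\theta)\, d\theta = \int_{\mathbb{R}} \varphi(x)\, m_1(x)\, dx$. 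Combined with the atomic contribution, this yields the claimed decomposition.

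The main technical obstacle will be justifying the change of variables in the presence of the finitely many points of $A$ at which $X_\xi(\cdot, T_{max})$ vanishes: at those $\theta$ the pointwise quotient defining $m_1$ is singular, but the corresponding image points form a Lebesgue-null set. I would handle this by isolating each bad point in an arbitrarily small neighborhood, applying the $C^1$ change of variables on each strictly-monotone component with positive derivative, and passing to the limit. The sign-rigidity of $m_0$ on each $I_i$ (Proposition \ref{discontinuousset}) is invoked separately to certify $p_i \neq 0$, so that every atom in the decomposition is genuine rather than a removable degeneracy.
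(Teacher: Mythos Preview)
Your argument is correct and takes a genuinely different route from the paper's proof. You work entirely on the Lagrangian side: starting from the pushforward identity $\langle m(\cdot,T_{max}),\varphi\rangle=\int_{-L}^L\varphi(X(\theta,T_{max}))m_0(\theta)\,d\theta$ (which the paper does establish as \eqref{testfunctionforradon}), you split the domain into the collapsing intervals $I_i$ and their complement $A$, read off the atoms directly, and then change variables on $A$ to produce $m_1$. The paper instead works on the Eulerian side: it writes $\langle m(\cdot,T_{max}),\phi\rangle=\int u\phi+u_x\phi_x$, partitions the $x$-axis at the points $x_1<\dots<x_{N_1}$, and integrates by parts on each subinterval using the regularity $u(\cdot,T_{max})\in C^3(\mathbb{R}\setminus F_{T_{max}})$ from Theorem~\ref{continuousinotmax}. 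The boundary terms are the jumps $u_x(x_i-,T_{max})-u_x(x_i+,T_{max})$, which are identified with $p_i$ (or zero) via Proposition~\ref{discontinuousset}.

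Your approach is more direct and makes the structure of the pushforward measure transparent; it also avoids the somewhat delicate integration-by-parts bookkeeping. The paper's approach, on the other hand, explicitly exhibits the peakon weights as jumps of $u_x$, which connects the decomposition to the BV structure of $u_x(\cdot,T_{max})$ established earlier and makes the physical interpretation (formation of peakons as discontinuities in slope) more visible. Both need the finiteness of $F_{T_{max}}$: you use it to ensure only finitely many degenerate points in $A$ where $X_\xi$ vanishes, while the paper uses it to have only finitely many boundary terms. One small note: your parenthetical that the student's formula for $m_1$ should match \eqref{definitionofm1} is correct as $L^1$ functions, since $X(A,T_{max})$ differs from $O_{T_{max}}$ only by the finite set $\widetilde{F}_{T_{max}}$, and on $O_{T_{max}}$ the paper has already shown (Theorem~\ref{continuousinotmax}) that $m(x,T_{max})=m_0(\eta)/X_\eta(\eta,T_{max})$.
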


At last, we give a theorem to show global existence of weak solutions (see Definition \ref{weaksolution}). Theorem \ref{maintheorem1} (iv) tells that classical solutions become  Radon measures when blow-up happens. After the blow-up time $T_{max}$, we can extend our solution $m(x,t)$ globally in the Radon measure space. We have: 
\begin{theorem}\label{maintheorem4}
Let $m_0\in\mathcal{M}(\mathbb{R})$ with compact support. Then there exists a global weak solution to the mCH equation satisfying:
$$u\in C([0,+\infty);H^1(\mathbb{R}))\cap L^\infty(0,+\infty; W^{1,\infty}(\mathbb{R})),$$
and
$$m=u-u_{xx}\in\mathcal{M}(\mathbb{R}\times[0,T))~\textrm{ for any }~T>0.$$
\end{theorem}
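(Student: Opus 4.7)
The strategy is to mollify the measure initial datum, use Theorem \ref{maintheorem1} to obtain classical solutions of the regularized problems, continue them past their classical blow-up times via the Lagrangian formulation \eqref{Lagran dynamics}, and then pass to the limit using compactness inherent in the uniform bound \eqref{uuxUbound}. Fix a standard mollifier $\rho_\epsilon$ and set $m_0^\epsilon := m_0 * \rho_\epsilon \in C_c^\infty(\mathbb{R})$; then $\|m_0^\epsilon\|_{L^1} \le \|m_0\|_{\mathcal{M}}$ and the supports of $m_0^\epsilon$ lie in a common compact interval. Theorem \ref{maintheorem1} produces a triple $(u^\epsilon, m^\epsilon, X^\epsilon)$ on $[0, T_{max}^\epsilon)$, and \eqref{uuxUbound} immediately yields $\|u^\epsilon(\cdot,t)\|_{W^{1,\infty}} \le \|m_0\|_{\mathcal{M}}$ and $|U^\epsilon| \le \tfrac{1}{2}\|m_0\|_{\mathcal{M}}^2$ uniformly in $\epsilon$ and $t$.

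To extend beyond $T_{max}^\epsilon$ I would use \eqref{eq:umdef} (with $m_0^\epsilon$ in place of $m_0$) to define $u^\epsilon$ and $U^\epsilon$ directly from $X^\epsilon$, and continue $X^\epsilon$ as the monotonic Lipschitz-in-$\xi$ solution of $\dot X^\epsilon(\xi,t) = U^\epsilon(X^\epsilon(\xi,t),t)$; where characteristics meet, they coalesce and propagate with common velocity, which — as identified in Theorem \ref{maintheorem3} — is exactly the Eulerian mechanism of peakon formation. This gives $m^\epsilon(\cdot,t) \in \mathcal{M}(\mathbb{R})$ as the push-forward of $m_0^\epsilon$ under $X^\epsilon(\cdot,t)$ for all $t \ge 0$, with $\|m^\epsilon(\cdot,t)\|_{\mathcal{M}} \le \|m_0\|_{\mathcal{M}}$ and uniformly compact support. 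The $H^1$ bound I would obtain from the energy identity $\|u^\epsilon(\cdot,t)\|_{H^1}^2 = \int u^\epsilon \, dm^\epsilon$, verified on $[0, T_{max}^\epsilon)$ by pairing \eqref{mCH} with $u$ and persisting on the extended interval because, after changing variable by $X^\epsilon$, both sides depend only on $X^\epsilon(\cdot,t)$ and the fixed $m_0^\epsilon$.

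With uniform $W^{1,\infty}$, $H^1$, and total-variation bounds secured, the next step is compactness. An equicontinuity estimate on $\partial_t u^\epsilon$ extracted from $u^\epsilon = G * m^\epsilon$ and the uniform Lipschitz control on the flow lets me apply Arzela-Ascoli and pass, along a subsequence, to $u^\epsilon \to u$ locally uniformly, $X^\epsilon \to X$ locally uniformly, and $m^\epsilon \stackrel{\ast}{\rightharpoonup} m$ in $\mathcal{M}(\mathbb{R}\times[0,T))$; the limits satisfy $u = G*m$ and $u \in L^\infty(0,\infty;W^{1,\infty}(\mathbb{R}))$. Strong continuity $u \in C([0,\infty);H^1(\mathbb{R}))$ follows from weak continuity (a consequence of $u = G*m$ and weak-$\ast$ temporal continuity of $m$) together with conservation of $\|u\|_{H^1}^2$ inherited from the $\epsilon$-level energy identity.

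The main obstacle is passing to the limit in the weak formulation of Definition \ref{weaksolution}, because the flux $(u^2 - u_x^2)m$ is a product of two only weakly convergent quantities and $u_x^\epsilon$ develops jumps across the forming peakons. I would resolve this by recasting the distributional form of \eqref{mCH} in Lagrangian variables: testing against a smooth $\varphi$ and exploiting the push-forward structure $m^\epsilon(\cdot,t) = X^\epsilon(\cdot,t)_\sharp m_0^\epsilon$ converts the offending term into
\begin{equation*}
\int_0^T\!\!\int_{\mathbb{R}} \varphi_x\bigl(X^\epsilon(\xi,t),t\bigr)\,\dot{X}^\epsilon(\xi,t)\,dm_0^\epsilon(\xi)\,dt,
\end{equation*}
which is stable under locally uniform convergence $X^\epsilon \to X$ and weak convergence of $\dot X^\epsilon = U^\epsilon(X^\epsilon,t)$, both guaranteed by the uniform Lipschitz regularity of the Lagrangian flow. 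The remaining linear terms pass to the limit by the weak-$\ast$ convergence of $m^\epsilon$ and the locally uniform convergence of $u^\epsilon$.
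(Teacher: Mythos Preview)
Your approach differs substantially from the paper's and contains a real gap at the extension step. The paper does \emph{not} mollify the initial datum; instead it mollifies the kernel, setting $G^\epsilon=\rho_\epsilon\ast G$ and further smoothing the velocity to $U^\epsilon=\rho_\epsilon\ast U_\epsilon$. Because $G_x^\epsilon$ is then globally Lipschitz, the regularized Lagrangian system $\dot X=U^\epsilon(X;X)$ is a globally Lipschitz vector field on the Banach space $C[-L,L]$, and Picard's theorem gives a \emph{global} flow $X^\epsilon$ directly---no blow-up, no continuation past collisions. The measure $m_\epsilon(\cdot,t)=X^\epsilon(\cdot,t)\#m_0$ is therefore globally defined, and the analytic work goes into the uniform BV and $L^1$-in-time Lipschitz bounds of Lemma~\ref{uepsilonlemma} (which supply compactness) and the weak-consistency estimate of Proposition~\ref{consistency}, which shows $\mathcal{L}(u^\epsilon,\phi)+\int\phi(\cdot,0)\,dm_0=O(\epsilon)$.

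Your scheme---mollify $m_0$, run Theorem~\ref{maintheorem1} to $T_{max}^\epsilon$, then ``continue $X^\epsilon$ as the monotonic Lipschitz-in-$\xi$ solution'' with coalescing characteristics---breaks exactly at that continuation. Once $X^\epsilon(\cdot,t)$ is constant on an interval $[\xi_1,\xi_2]$, the integrand $G_x\bigl(X^\epsilon(\xi,t)-X^\epsilon(\theta,t)\bigr)$ in \eqref{UXT} sits on the jump of $G_x$ at $0$ for $\theta$ in a set of positive measure, and the velocity $U$ is ill-defined; equivalently, the vector field of \eqref{Lagran dynamics} is neither Lipschitz nor even continuous on the closure of the strictly monotone maps, so existence and uniqueness of a monotone continuation are not available. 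Making this rigorous is precisely the sticky-particle construction of \cite{GaoLiu} alluded to in Remark~\ref{solutionsforrandomeasure}, an independent and nontrivial argument that cannot be invoked as routine. A secondary issue: your route to $u\in C([0,\infty);H^1)$ via conservation of $\|u\|_{H^1}^2$ is only justified on $[0,T_{max}^\epsilon)$ and need not survive your (undefined) extension; the paper instead derives the quantitative estimate $\|u(\cdot,t)-u(\cdot,s)\|_{H^1}^2\le 3M_1^4|t-s|$ directly from Lemma~\ref{uepsilonlemma}~(iii) and the uniform $L^\infty$ bounds, bypassing any energy identity.
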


Now, we compare the mCH equation with the Camassa-Holm (CH) equation:
$$\partial_tm+\partial_x(um)+m\partial_xu=0,\quad m=u-u_{xx},\quad x\in\mathbb{R},t>0.$$
The CH equation was established by Camassa and Holm \cite{Camassa} to model the unidirectional propagation of waves at free surface of a shallow layer of water ($u(x,t)$ representing the height of water's free surface above a flat bottom). It is also a complete integrable system which has a bi-Hamiltonian structure and a Lax pair \cite{Camassa}.

There are some different properties between the CH equation and the mCH equation.

$\bullet$ \emph{Classical solutions and blow-up criteria.} 
For a large class of initial data,  classical solutions to the CH equation blow up in  finite time (see \cite{Brandolese} and references in it).  Moreover, the only way that a classical solution of the CH equation fails to exist globally is that the wave breaks \cite{Constantin11} in the sense that the solution $u$ remains bounded  while the spatial derivative $u_x$ becomes unbounded. For the mCH equation, blow-up behaviors also happen for a large class of initial data (see \cite{Chenrongming,GuiLiuOlverQu,LiuOlver}). However, $u_{xx}$ (hence $m$) becomes unbounded when blow-up happens, while $u$ and  $u_x$ remain bounded.  

$\bullet$ \emph{Lifespan for classical solutions.} 
Comparing with \eqref{lifespanupbound}, the lower bound for lifespan of strong solutions to the CH equation with initial data $\e u_0(x)$ is given by \cite{Danchin,Luc}:
$$T_{max}\geq \frac{C}{\e}~\textrm{ for some  }~C=C(u_0).$$

$\bullet$ \emph{$N$-peakon weak solutions.} 
Trajectories for $N$-peakon weak solutions to the CH equation never collide \cite{CamassaLee,AlinaLiu} provided that the initial datum $m_0(x)=\sum_{i=1}^Np_i\delta(x-c_i)$ satisfies $p_i>0$ and $c_i\neq c_j$ for $i\neq j$. However, the trajectories for $N$-peakon solutions of the mCH equation may collide in finite time even if $m_0\geq0$ \cite{GaoLiu}. Moreover, for the CH equation, when blow-up happens at finite time $T_{max}$, we have ${\liminf}_{t\to T_{max}}u_x(x,t)=-\infty$ (see \cite{Constantin11,Luc}). Peakon solutions $u$ and its derivative $u_x$ are in BV space, which are bounded functions. Hence, peakon solutions can not be formed when blow-up happens (comparing with Theorem \ref{maintheorem3}) for the CH equation.

$\bullet$ \emph{General weak solutions.} 
In \cite{GaoLiu}, the authors proved nonuniqueness of weak solutions obtained by Theorem \ref{maintheorem4}. Comparing with Theorem \ref{maintheorem4}, there is a unique global weak solution $u\in C([0,+\infty);H^1(\mathbb{R}))$ and $m\in\mathcal{M}_+(\mathbb{R})$ (see \cite{AlinaLiu,Constantin}) to the CH equation when $u_0\in H^1(\mathbb{R})$ and $0\leq m_0\in\mathcal{M}(\mathbb{R})$.  
For general initial data $u_0\in H^1(\mathbb{R})$, global existence of weak solutions to the CH equation was obtained by several different methods (see \cite{Bressan2,Bressan3,Holden,Holden2,Xin,Xin2}).

For more results about local well-posedness and blow up behavior of strong solutions to the Cauchy problem \eqref{mCH}-\eqref{initial m}, one can refer to \cite{Chenrongming,FuGui,GuiLiuOlverQu,LiuOlver}. For weak solutions, one can refer to \cite{GaoLiu,Qingtianzhang}.

The rest of this article is organized as follows.
In Section \ref{sec1}, we use contraction mapping theorem to prove  local existence and uniqueness of solutions $X(\xi,t)$ to the Lagrange dynamics \eqref{Lagran dynamics}. Then, we use $X(\xi,t)$ to give $(u(x,t),m(x,t))$ and prove that it is a unique classical solution to the mCH equation \eqref{mCH}-\eqref{initial m}. Besides, when $\sup_{t\in[0,T)}||m(\cdot ,t)||_{L^\infty}$ is finite, we can extend this classical solution in time. In Section \ref{sec2}, we show some blow-up criteria for classical solutions. In Section \ref{sec3}, we  prove that for some initial data classical solutions blow up in a finite time and the estimates for blow-up rates are given. For small initial data, almost global existence of classical solutions is obtained. In Section \ref{sec4}, we study  classical solutions at blow-up time $T_{max}$. $u(\cdot ,T_{max})$ and $u_x(\cdot ,T_{max})$ are BV functions while $m(\cdot ,t)$ has a unique limit $m(\cdot, T_{max})$ in Radon measure space as $t\rightarrow T_{max}$. Moreover, we prove that in some cases peakons are formed at $T_{max}$. In the last section, we use regularized Lagrange dynamics to prove global existence of weak solutions in Radon measure space.

\section{Lagrange dynamics and short time classical solutions}\label{sec1}
In this section, we study the existence and uniqueness of solutions to Lagrange dynamics \eqref{Lagran dynamics}. Then, we prove $(u(x,t),m(x,t))$ defined by \eqref{eq:umdef} is a unique classical solution to \eqref{mCH}-\eqref{initial m}.

First, let's introduce the spaces for solutions. For nonnegative integers $k,n$ and real number $T>0$, we denote $$U_T:=[-L,L]\times[0,T]$$
 and the function space
\begin{align}\label{functionspace}
C^{k}_{n}(U_T):=\big\{u:U_T\rightarrow\mathbb{R}:\quad\partial_x^\beta u\in C(U_T),\quad|\beta|\leq k;\quad\partial_t^\alpha u\in C(U_T),\quad|\alpha|\leq n\big\}.
\end{align}
Similarly, we can define $C_n^k(\mathbb{R}\times[0,T])$.

We will present the results of this section in three subsections  as follows.
\begin{enumerate}
  \item In Subsection \ref{section1}, when $m_0\in C_c^k(-L,L)$, we prove local existence and uniqueness of a solution $X\in C^{k+1}_1([-L,L]\times[0,t_1])$ to \eqref{Lagran dynamics} such that $$\min\{X_\xi(\xi,t):(\xi,t)\in[-L,L]\times[0,t_1]\}>0.$$
  \item In Subsection \ref{section2}, we prove $u$ defined by \eqref{eq:umdef} belongs to $C_1^{k+2}(\mathbb{R}\times[0,t_1])$ and ($u,m$) is a unique classical solution to the mCH equation.
  \item In Subsection \ref{section3}, we prove that whenever the classical solution $m$ satisfies
  $$\sup_{t\in[0,T)}||m(\cdot,t)||_{L^\infty}<\infty,$$ 
  we can extend the classical solution in time.
\end{enumerate}

\subsection{Local existence and uniqueness of solutions to Lagrange dynamics}\label{section1}
In this subsection, we use the contraction mapping theorem to prove short time existence and uniqueness of solutions to the Lagrange dynamics \eqref{Lagran dynamics}, which is equivalent to the following integral equation:
\begin{align}\label{equi form}
X(\xi,t)=\xi+\int_0^tU(X(\xi,s),s)ds,
\end{align}
where $U$ is defined by \eqref{UXT}. Set
\begin{align}\label{contractionmapping}
T_X(\xi,t):=\xi+\int_0^tU(X(\xi,s),s)ds.
\end{align}
For  constants $C_2>C_1>0$ and $t_1>0$, we define
\begin{align}\label{solutionspace}
\mathcal{Q}_{t_1}(C_1,C_2):=\Big\{X\in C(U_{t_1})&:C_1(\xi-\eta)^2\leq (X(\xi,t)-X(\eta,t))(\xi-\eta)\nonumber\\
&\leq C_2(\xi-\eta)^2,\textrm{for any }\xi,\eta\in[-L,L] \textrm{ and } t\in[0,t_1]\Big\}.
\end{align}
Obviously, $\mathcal{Q}_{t_1}(C_1,C_2)$ is a closed subset of $ C(U_{t_1})$. We will look for suitable constants $C_1,~C_2,~ t_1$ and then use the contraction mapping theorem in the set $\mathcal{Q}_{t_1}(C_1,C_2)$.

Before presenting the existence and uniqueness theorem, we give two useful lemmas.

\begin{lemma}\label{continuous lemma0}
Assume $g\in L^\infty(-L,L)$ and $X(\xi,t)\in \mathcal{Q}_{t_1}(C_1,C_2)$ for some constants  $C_2>C_1>0$ and $t_1>0$.
Let
$$A(x,t):=\int_{-L}^LG'(x-X(\theta,t))g(\theta)d\theta.$$
Then, we have $A\in C(\mathbb{R}\times[0,t_1]).$
\end{lemma}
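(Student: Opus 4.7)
The plan is to apply the dominated convergence theorem, using the fact that the only discontinuity of $G'$ is the jump at the origin, and that the strict monotonicity built into $\mathcal{Q}_{t_1}(C_1,C_2)$ ensures the set of "bad" parameters $\theta$ where this jump matters is always a single point, hence has Lebesgue measure zero.

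More concretely, fix $(x_0,t_0)\in\mathbb{R}\times[0,t_1]$ and a sequence $(x_n,t_n)\to(x_0,t_0)$. The integrand
\[
\Phi_n(\theta):=G'(x_n-X(\theta,t_n))\,g(\theta)
\]
is bounded uniformly in $n$ and $\theta$ by $\tfrac{1}{2}\|g\|_{L^\infty}$, since $|G'|\le\tfrac12$. This gives the required integrable majorant on $[-L,L]$. The remaining task is to check pointwise convergence $\Phi_n(\theta)\to\Phi_0(\theta)$ for almost every $\theta$.

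Because $G'$ is continuous on $\mathbb{R}\setminus\{0\}$ and $X$ is continuous on $U_{t_1}$, we have $\Phi_n(\theta)\to\Phi_0(\theta)$ whenever $x_0-X(\theta,t_0)\neq 0$. The lower bound in the definition of $\mathcal{Q}_{t_1}(C_1,C_2)$ forces $X(\cdot,t_0)$ to be strictly increasing, so the equation $X(\theta,t_0)=x_0$ has at most one solution $\theta\in[-L,L]$. Thus the exceptional set is at most a single point and has measure zero, giving pointwise convergence a.e.

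Dominated convergence then yields $A(x_n,t_n)\to A(x_0,t_0)$, proving $A\in C(\mathbb{R}\times[0,t_1])$. The main (and essentially only) subtle point is the jump of $G'$ at $0$; the whole argument hinges on the strict monotonicity encoded in $\mathcal{Q}_{t_1}(C_1,C_2)$, which reduces the bad locus to a negligible set. No smoothness of $g$ beyond $L^\infty$ is needed, and the same template will be reused later with $G''$ in place of $G'$ once one accounts for the $\delta$-type contribution.
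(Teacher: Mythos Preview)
Your proof is correct, and it takes a genuinely different route from the paper's argument. The paper proceeds by a direct case analysis: it fixes $(x,t)$, distinguishes whether $x$ lies to the right of $X(L,t)$, strictly inside the range of $X(\cdot,t)$, or at an endpoint, and in each case estimates $|A(y,s)-A(x,t)|$ explicitly by splitting the domain of integration around the preimages $\xi,\eta$ and using the Lipschitz bound $|G'(a)-G'(b)|\le\tfrac12|a-b|$ on regions where the arguments have the same sign, together with a crude bound $\|g\|_{L^\infty}|\xi-\eta|$ on the small interval where the sign may change. Your dominated-convergence argument bypasses all of this bookkeeping: the uniform bound $|G'|\le\tfrac12$ supplies the majorant, and the single observation that strict monotonicity of $X(\cdot,t_0)$ collapses the bad set $\{\theta:X(\theta,t_0)=x_0\}$ to at most one point handles the jump of $G'$ in one stroke. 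The paper's hands-on estimate yields an explicit modulus of continuity in terms of $|x-y|$ and the uniform continuity of $X$, which could in principle feed into quantitative bounds, but the paper does not exploit this; your argument is shorter and isolates the essential mechanism more cleanly.
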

\begin{proof}
According to \eqref{solutionspace}, $X(\xi,t)$ is monotonic about $\xi$. For given $(x,t)\in U_{t_1}$, we separate the proof into three parts.

\emph{Step 1}.  Continuity at $(x,t)\in\mathbb{R}\times[0,t_1]$ when $x>X(L,t)$.

For $(y,s)$ closed to $(x,t)$ and because $X\in C(U_{t_1})$ is monotonic, we can assume $y>X(\theta,s)$ for $\theta\in(-L,L)$. A direct estimate gives
\begin{align*}
&\quad|A(y,s)-A(x,t)|=\bigg|\int_{-L}^LG'(y-X(\theta,s))g(\theta)d\theta-\int_{-L}^LG'(x-X(\theta,t))g(\theta)d\theta\bigg|\\
&\leq\int_{-L}^L|G(x-X(\theta,t))-G(y-X(\theta,s))|\cdot|g(\theta,t)|d\theta\\
&\leq \frac{1}{2}||g||_{L^\infty}\int_{-L}^L|y-x|+|X(\theta,t)-X(\theta,s)|d\theta.
\end{align*}
Therefore, according to the uniform continuity of $X$, $A$ is continuous at $(x,t)$. The proof of the case $x<X(-L,t)$ is similar.

\emph{Step 2}. Continuity at $(x,t)\in\mathbb{R}\times[0,t_1]$ when $x=X(\xi,t)$ for some $\xi\in(-L,L)$.

Due to the continuity of $X$, for $(y,s)$ closed to $(x,t)$, there exists $\eta\in[-L,L]$ such that $X(\eta,s)=y$. Without lose of generality, we assume $\xi>\eta.$
\begin{align*}
&\quad|A(y,s)-A(x,t)|\\
&=\bigg|\int_{-L}^LG'(X(\eta,s)-X(\theta,s))g(\theta)d\theta-\int_{-L}^LG'(X(\xi,t)-X(\theta,t))g(\theta)d\theta\bigg|\\
&\leq\int_{-L}^\eta|G'(X(\eta,s)-X(\theta,s))-G'(X(\xi,t)-X(\theta,t))||g(\theta)|d\theta\\
&\quad+\int_{\eta}^\xi|G'(X(\eta,s)-X(\theta,s))-G'(X(\xi,t)-X(\theta,t))||g(\theta)|d\theta\\
&\quad+\int_{\xi}^L|G'(X(\eta,s)-X(\theta,s))-G'(X(\xi,t)-X(\theta,t))||g(\theta)|d\theta.
\end{align*}
Then, the monotonicity of $X(\theta,t)$ implies that
\begin{align*}
|A(y,s)-A(x,t)|\leq||g||_{L^\infty}|\xi-\eta|+||g||_{L^\infty}\int_{-L}^L|x-y|+|X(\theta,s)-X(\theta,t)|d\theta.
\end{align*}
From the definition of $\mathcal{Q}_{t_1}(C_1,C_2)$, we have
\begin{align}\label{will be used}
|x-y|=|X(\xi,t)-X(\eta,s)|&\geq|X(\xi,s)-X(\eta,s)|-|X(\xi,t)-X(\xi,s)|\nonumber\\
&\geq C_1|\xi-\eta|-|X(\xi,t)-X(\xi,s)|.
\end{align}
Therefore, $|\xi-\eta|\leq \frac{1}{C_1}(|x-y|+|X(\xi,t)-X(\xi,s)|)$.
Hence,  $A(x,t)$ is continuous at $(x,t)$.

\emph{Step 3}. Continuity at $(x,t)\in\mathbb{R}\times[0,t_1]$ when $x=X(L,t)$. The case $x=X(-L,t)$ is similar.

For $(y,s)$ closed to $(x,t)$, we have two cases. When $y>X(L,s)$, we can use Step 1. When there exists $\xi\in(-L,L)$ such that $y=X(\xi,s)$, we can use Step 2.

This is the end of the proof.
\end{proof}

\begin{lemma}\label{contractionmap1}
Assume $m_0\in L^\infty(-L,L)$ and $X\in\mathcal{Q}_{t_1}(C_1,C_2)$ for some constants $C_2>C_1>0$ and $t_1>0$. Then, for $-L\leq\eta<\xi\leq L$, we have
\begin{align}\label{incre}
[1-(M_1M_\infty+C_2M_1^2)t_1](\xi-\eta)&\leq T_X(\xi,t)-T_X(\eta,t)\nonumber\\
&\leq[1+(M_1M_\infty+C_2M_1^2)t_1](\xi-\eta),
\end{align}
where $M_1:=||m_0||_{L^1}$ and $M_\infty:=||m_0||_{L^\infty}$.

\end{lemma}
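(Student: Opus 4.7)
The plan is to compute $T_X(\xi,t)-T_X(\eta,t)=(\xi-\eta)+\int_0^t\bigl[U(X(\xi,s),s)-U(X(\eta,s),s)\bigr]\,ds$ and reduce everything to a Lipschitz-type estimate for $U$ along the flow. Factoring $U=u^2-u_x^2=(u-u_x)(u+u_x)$, I would write
\begin{align*}
U(x_1,s)-U(x_2,s)&=\bigl(u(x_1,s)-u(x_2,s)\bigr)\bigl(u(x_1,s)+u(x_2,s)\bigr)\\
&\quad-\bigl(u_x(x_1,s)-u_x(x_2,s)\bigr)\bigl(u_x(x_1,s)+u_x(x_2,s)\bigr),
\end{align*}
with $x_1=X(\xi,s)$, $x_2=X(\eta,s)$. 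The sum factors are trivially controlled by $M_1$ via \eqref{uuxUbound}, so the task is to bound the difference factors by a constant times $|\xi-\eta|$.

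For the $u$-difference, $G$ is globally $\tfrac12$-Lipschitz, so
$$|u(x_1,s)-u(x_2,s)|\le\tfrac12 M_1\,|x_1-x_2|\le\tfrac{C_2 M_1}{2}|\xi-\eta|,$$
using the upper bound in the definition of $\mathcal{Q}_{t_1}(C_1,C_2)$. The main obstacle is the $u_x$-difference, because $G'(x)=-\tfrac12\operatorname{sgn}(x)e^{-|x|}$ has a jump at $0$, so $G'$ is not globally Lipschitz. The way around this is to exploit the monotonicity of $X(\cdot,s)$ (guaranteed by $X\in\mathcal{Q}_{t_1}(C_1,C_2)$) and split
$$u_x(x_1,s)-u_x(x_2,s)=\int_{-L}^{L}\bigl[G'(X(\xi,s)-X(\theta,s))-G'(X(\eta,s)-X(\theta,s))\bigr]m_0(\theta)\,d\theta$$
into the pieces $\theta\in[-L,L]\setminus[\eta,\xi]$ and $\theta\in[\eta,\xi]$. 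On the first piece both arguments of $G'$ have the same sign, so $G'$ is smooth there with $|G''|\le\tfrac12$, and one gets a bound $\tfrac{C_2}{2}|\xi-\eta|\cdot M_1$ after integrating against $|m_0|$. On the second piece the arguments may have opposite signs and we simply use $|G'|\le\tfrac12$ together with $\int_\eta^\xi|m_0|\,d\theta\le M_\infty|\xi-\eta|$, yielding a contribution $M_\infty|\xi-\eta|$.

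Combining, one obtains $|u_x(x_1,s)-u_x(x_2,s)|\le(M_\infty+\tfrac{C_2 M_1}{2})|\xi-\eta|$, and then
$$|U(X(\xi,s),s)-U(X(\eta,s),s)|\le(M_1 M_\infty+C_2 M_1^{2})|\xi-\eta|$$
after collecting the two contributions (each of size $\tfrac{C_2 M_1^2}{2}|\xi-\eta|$ from the $u$-factor plus $M_1 M_\infty|\xi-\eta|+\tfrac{C_2 M_1^2}{2}|\xi-\eta|$ from the $u_x$-factor). Integrating in $s\in[0,t]\subseteq[0,t_1]$ and adding/subtracting $(\xi-\eta)$ produces exactly the two-sided estimate \eqref{incre}. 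The only delicate point is the sign-based splitting of the $G'$ integral, which is where the monotonicity assumption $X\in\mathcal{Q}_{t_1}(C_1,C_2)$ is essential; everything else is bookkeeping with the bounds already recorded in \eqref{uuxUbound}.
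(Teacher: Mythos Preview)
Your proposal is correct and follows essentially the same approach as the paper: the identical splitting of the $u_x$-difference into $\theta\notin[\eta,\xi]$ (where the arguments of $G'$ share a sign) and $\theta\in[\eta,\xi]$ (where one uses $|G'|\le\tfrac12$ against $M_\infty|\xi-\eta|$), combined with the same sum-factor bound $|u|+|u|,\,|u_x|+|u_x|\le M_1$. The factorization $U=(u-u_x)(u+u_x)$ you mention at the outset is not actually what you use---your displayed identity is the standard difference-of-squares on $u^2$ and $u_x^2$ separately, which is exactly what the paper does.
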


\begin{proof}
Assume $X\in\mathcal{Q}_{t_1}(C_1,C_2)$ for some constants $C_2>C_1>0$ and $t_1>0$. For $-L\leq\eta<\xi\leq L$, $t\in[0,t_1],$ we have
\begin{align}\label{incre1}
T_X(\xi,t)-T_X(\eta,t)=\xi-\eta+\int_0^t[U(X(\xi,s),s)-U(X(\eta,s),s)]ds.
\end{align}
By \eqref{uuxUbound}, we obtain
\begin{align*}
&\quad|U(X(\xi,s),s)-U(X(\eta,s),s)|\\
&\leq |u^2(X(\xi,s),s)-u^2(X(\eta,s),s)|+|u_x^2(X(\xi,s),s)-u_x^2(X(\eta,s),s)|\\
&\leq M_1|u(X(\xi,s),s)-u(X(\eta,s),s)|+M_1|u_x(X(\xi,s),s)-u_x(X(\eta,s),s)|\\
&=:I_1+I_2.
\end{align*}
Because $X\in\mathcal{Q}_{t_1}(C_1,C_2)$, we have
\begin{align*}
|u(X(\xi,s),s)-u(X(\eta,s),s)|&=\bigg|\int_{-L}^Lm_0(\theta)\Big(G(X(\xi,s)-X(\theta,s))-G(X(\eta,s)-X(\theta,s))\Big)d\theta\bigg|\\
&\leq\frac{1}{2}M_1(X(\xi,s)-X(\eta,s))\leq\frac{1}{2}M_1C_2(\xi-\eta).
\end{align*}
Thus,
$$I_1\leq\frac{1}{2}M_1^2C_2(\xi-\eta).$$

Next, we estimate $I_2$.

 When $X\in\mathcal{Q}_{t_1}(C_1,C_2)$, we have
$(X(\xi,s)-X(\theta,s))(X(\eta,s)-X(\theta,s))>0$ for $\theta\in[-L,\eta)\cap(\xi,L]$. On the other hand, we know $|G'(a)-G'(b)|=|G(a)-G(b)|\leq \frac{1}{2}|a-b|$ when $ab>0$. Therefore, 
\begin{align*}
&\quad|u_x(X(\xi,s),s)-u_x(X(\eta,s),s)|\\
&\leq\int_{[-L,\eta)\cap(\xi,L]}m_0(\theta)|G'(X(\xi,s)-X(\theta,s))-G'(X(\eta,s)-X(\theta,s))|d\theta\\
&\quad+\int_\eta^\xi m_0(\theta)|G'(X(\xi,s)-X(\theta,s))-G'(X(\eta,s)-X(\theta,s))|d\theta\\
&\leq(\frac{1}{2}M_1C_2+M_\infty)(\xi-\eta).
\end{align*}
Thus
$$I_2\leq (\frac{1}{2}C_2M_1^2+M_1M_\infty)(\xi-\eta).$$
Combining $I_1$ and $I_2$ gives
\begin{align*}
-(C_2M_1^2+M_1M_\infty)t_1(\xi-\eta)&\leq\int_0^t[U(X(\xi,s),s)-U(X(\eta,s),s)]ds\\
&\leq (C_2M_1^2+M_1M_\infty)t_1(\xi-\eta).
\end{align*}
Together with \eqref{incre1}, we obtain \eqref{incre}.
\end{proof}

We have the following existence and uniqueness theorem.
\begin{theorem}\label{noregularityresults}
Assume $m_0\in C_c^k(-L,L)$ $(k\in \mathbb{N}, k\geq1)$. Let $M_1:=||m_0||_{L^1}$ and $M_\infty:=||m_0||_{L^\infty}$. Then, for any $t_1$ with 
\begin{align}\label{time}
0<t_1<\frac{1}{2M_1^2+M_1M_\infty},
\end{align}
there exist constants $C_2>C_1>0$  satisfying
\begin{align}\label{C_2}
\frac{1+M_1M_\infty t_1}{1-M_1^2t_1}<C_2<\frac{1-M_1M_\infty t_1}{M_1^2t_1},
\end{align}
and
\begin{align}\label{C_1}
0<C_1<1-(M_1M_\infty+C_2M_1^2)t_1,
\end{align}
such that \eqref{equi form} has a unique solution $X\in C_1^{k+1}(U_{t_1})$ satisfying
\begin{align}
C_1\leq X_\xi(\xi,t)\leq C_2\label{Xxiproperties}
\end{align}
for $(\xi,t)\in[-L,L]\times[0,t_1]$.

Moreover, for any $\ell\in\mathbb{N}$, $0\leq\ell\leq k+1$, there exists a constant $\widehat{C}_\ell$ (depending on $||m_0||_{C^k}$, $||m_0||_{L^1}$ and $t_1$) such that
\begin{align}\label{derivativeproperties}
|\partial_\xi^\ell X(\xi,t)|\leq \widehat{C}_\ell.
\end{align}
\end{theorem}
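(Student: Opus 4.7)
The plan is to apply the Banach contraction mapping theorem to the operator $T_X$ from \eqref{contractionmapping} acting on the closed subset $\mathcal{Q}_{t_1}(C_1,C_2) \subset C(U_{t_1})$, and then to bootstrap the regularity of the resulting fixed point using $m_0 \in C_c^k$.

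First I would verify that the constants can be chosen consistently. Clearing denominators in \eqref{C_2} and simplifying reduces the compatibility of the two bounds on $C_2$ to the single condition $(2M_1^2 + M_1 M_\infty) t_1 < 1$, i.e.\ \eqref{time}; once $C_2$ is fixed in the admissible range, the right-hand side of \eqref{C_1} is positive and an admissible $C_1$ can be chosen. The invariance $T_X(\mathcal{Q}_{t_1}(C_1,C_2)) \subset \mathcal{Q}_{t_1}(C_1,C_2)$ then follows immediately from Lemma \ref{contractionmap1}: the left inequality in \eqref{C_2} delivers $1 + (M_1 M_\infty + C_2 M_1^2) t_1 \le C_2$, controlling the upper slope, while the lower slope $C_1$ is exactly \eqref{C_1}. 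Continuity of $T_X(\xi,t)$ on $U_{t_1}$ is supplied by Lemma \ref{continuous lemma0} applied with $g = m_0$ (an easier estimate for the continuous $G$ itself handles the $u^2$ piece).

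For the contraction property I would estimate
\begin{align*}
|T_X(\xi,t) - T_Y(\xi,t)| \le \int_0^t |U(X(\xi,s),s) - U(Y(\xi,s),s)|\, ds,
\end{align*}
write $U = u^2 - u_x^2$, factor differences of squares, and apply \eqref{uuxUbound} together with $|G(a) - G(b)|, |G'(a) - G'(b)| \leq \tfrac12 |a-b|$ (the unit jump of $G'$ at $0$ being absorbed by integration against the bounded $m_0$) to get $|U(X(\xi,s),s) - U(Y(\xi,s),s)| \le C(M_1, M_\infty) \sup_{\tau \le s} \|X(\cdot,\tau) - Y(\cdot,\tau)\|_\infty$. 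Passing on $\mathcal{Q}_{t_1}(C_1, C_2)$ to the exponentially weighted norm $\|X\|_\lambda := \sup_{0\le t \le t_1} e^{-\lambda t} \|X(\cdot, t)\|_\infty$ with $\lambda$ large enough makes $T_X$ a strict contraction (irrespective of the precise $t_1$ within \eqref{time}), and the Banach fixed-point theorem yields a unique $X \in \mathcal{Q}_{t_1}(C_1,C_2)$. The bi-Lipschitz condition in the definition of $\mathcal{Q}_{t_1}$, combined with the continuity of $X_\xi$ to be established next, then forces \eqref{Xxiproperties}.

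The main obstacle is the regularity claim $X \in C_1^{k+1}$ together with the a priori bounds \eqref{derivativeproperties}. I would differentiate the fixed-point equation formally in $\xi$; since $\partial_x U = 2 u_x(u - u_{xx}) = 2 u_x m$ and the pushforward identity $m(X(\xi,s),s) X_\xi(\xi,s) = m_0(\xi)$ is forced by the fixed-point structure, this collapses to the \emph{linear} integral equation
\begin{align*}
X_\xi(\xi,t) = 1 + \int_0^t 2\, u_x(X(\xi,s),s)\, m_0(\xi)\, ds,
\end{align*}
whose integrand is bounded pointwise by $M_1 M_\infty$; hence $X_\xi$ is continuous and uniformly bounded, and together with the Lipschitz lower bound built into $\mathcal{Q}_{t_1}(C_1,C_2)$ one recovers $C_1 \le X_\xi \le C_2$. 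For higher derivatives $\partial_\xi^\ell X$ with $2 \le \ell \le k+1$, an induction on $\ell$ produces an analogous linear integral equation for $\partial_\xi^\ell X$ whose inhomogeneity is a polynomial in lower-order derivatives $\partial_\xi^j X$ with $j < \ell$ and in spatial derivatives of $u,u_x,m$ evaluated along the flow; the recursion $u_{xx} = u - m$ and the pushforward $m(X(\xi,s),s) = m_0(\xi)/X_\xi(\xi,s)$ ensure that each extra $\xi$-derivative only lands one more derivative on $m_0 \in C_c^k$ rather than on the kinked $G$, so no singular term appears provided $\ell \le k+1$. A linear Grönwall estimate at each step produces \eqref{derivativeproperties} with constants depending on $\|m_0\|_{C^k}$, $M_1$, and $t_1$. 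Finally, $\partial_t X = U(X,t) \in C(U_{t_1})$ is immediate from the integral equation, giving $X \in C^1$ in $t$ and hence $X \in C_1^{k+1}(U_{t_1})$.
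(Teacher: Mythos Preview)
Your overall strategy coincides with the paper's: contraction on $\mathcal{Q}_{t_1}(C_1,C_2)$ followed by a regularity bootstrap. Two points are worth comparing.

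\textbf{Contraction.} The paper does not need the exponentially weighted norm. On $\mathcal{Q}_{t_1}(C_1,C_2)$ the arguments $X(\xi,s)-X(\theta,s)$ and $Y(\xi,s)-Y(\theta,s)$ always share the sign of $\xi-\theta$, so the jump of $G'$ at $0$ simply never enters and $|G'(a)-G'(b)|\le \tfrac12|a-b|$ holds on the nose; this (not ``absorption by the bounded $m_0$'') is what makes the estimate clean and gives the direct Lipschitz constant $2M_1^2t_1<1$ from \eqref{time}. Your weighted-norm trick works, but it obscures that the smallness of $t_1$ already delivers the contraction.

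\textbf{Regularity.} Here the paper takes a different and more self-contained route. Rather than invoking $\partial_xU=2u_xm$ together with the pushforward identity (which, as stated, looks circular: you need $X\in C^1$ with $X_\xi>0$ to even speak of $m=u-u_{xx}$ as a function and to justify the chain rule), the paper exploits monotonicity of $X$ and the identity $G'=\mp G$ on $\mathbb{R}_\pm$ to rewrite the velocity in the \emph{product form}
\[
U(X(\xi,t),t)=4\Big(\int_{-L}^{\xi}G(X(\xi,t)-X(\theta,t))m_0(\theta)\,d\theta\Big)\Big(\int_{\xi}^{L}G(X(\xi,t)-X(\theta,t))m_0(\theta)\,d\theta\Big),
\]
which is manifestly as smooth in $\xi$ as $m_0$ allows. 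Differentiating this product, the two ``inner'' terms cancel (again by $G'=\mp G$), leaving the \emph{explicit} formula
\[
X_\xi(\xi,t)=1+2m_0(\xi)\int_0^t u_x(X(\xi,s),s)\,ds,
\]
identical to yours but obtained without any appeal to $u_{xx}$ or to a pushforward. The same mechanism persists at every order: each successive $\xi$-derivative produces an explicit expression in $m_0,\dots,m_0^{(\ell-1)}$, $X_\xi,\dots,\partial_\xi^{\ell-1}X$ and integrals of $G$, so no Gr\"onwall step is ever needed and the bounds \eqref{derivativeproperties} are read off directly. Your inductive scheme would reach the same conclusion, but the anticipated linear integral equations in $\partial_\xi^\ell X$ actually degenerate to explicit formulas because of this cancellation; recognizing that is the main structural point of the paper's proof.
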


\begin{proof}
We separate this proof into two parts.

\textbf{Part I.(Existence and Uniqueness)} We use the contraction mapping theorem to prove the existence of a unique solution $X\in C_1^0(U_{t_1})$ to \eqref{equi form}.

\emph{Step 1}. When $0<t_1<\frac{1}{2M_1^2+M_1M_\infty}$, we prove there are constants $C_2>C_1>0$ such that when $X\in\mathcal{Q}_{t_1}(C_1,C_2)$, we have $T_X\in \mathcal{Q}_{t_1}(C_1,C_2)$, where $T_X$ is defined by \eqref{contractionmapping}.

When $t_1$ satisfies \eqref{time}, we have
\begin{align}\label{contrac}
M_1^2t_1<\frac{1}{2}\textrm{ and }M_1M_\infty t_1<1.
\end{align}
A simple computation shows that
$$\frac{1+M_1M_\infty t_1}{1-M_1^2t_1}<\frac{1-M_1M_\infty t_1}{M_1^2t_1}.$$
Hence, there is a constant $C_2$ satisfying \eqref{C_2}.
Moreover, inequality \eqref{C_2} implies
\begin{align}\label{C22}
1+(M_1M_\infty+C_2M_1^2)t_1\leq C_2,
\end{align}
and
$$0<1-(M_1M_\infty+C_2M_1^2)t_1.$$
Therefore, we can choose $C_1$ satisfying \eqref{C_1}.

When $X\in\mathcal{Q}_{t_1}(C_1,C_2)$, combining \eqref{incre}, \eqref{C_1} and \eqref{C22} gives
$$T_X\in\mathcal{Q}_{t_1}(C_1,C_2)$$
and Step 1 is completed.

\emph{Step 2}. We prove $T_X$ is a contraction map on $\mathcal{Q}_{t_1}(C_1,C_2)$.

For $X,Y\in\mathcal{Q}_{t_1}(C_1,C_2)$, combining \eqref{uuxUbound} we have
\begin{align}\label{contraction1}
&\quad|T_X(\xi,t)-T_Y(\xi,t)|\leq\int_0^t|U(X(\xi,s),s)-U(Y(\xi,s),s)|ds\nonumber\\
&\leq M_1\int^t_0|u(X(\xi,s),s)-u(Y(\xi,s),s)|ds+M_1\int^t_0|u_x(X(\xi,s),s)-u_x(Y(\xi,s),s)|ds\nonumber\\
&=:J_1+J_2.
\end{align}
For the first term $J_1$, we estimate
\begin{align}\label{contraction2}
 u(X(\xi,s),s)-u(Y(\xi,s),s)&=\int_{-L}^Lm_0(\theta)\Big(G(X(\xi,s)-X(\theta,s))-G(Y(\xi,s)-Y(\theta,s))\Big)d\theta\nonumber\\
&\leq\frac{1}{2}\int_{-L}^Lm_0(\theta)(|X(\xi,s)-Y(\xi,s)|+|X(\theta,s)-Y(\theta,s)|)d\theta\nonumber\\
&\leq M_1||X-Y||_{C(U_{t_1})}.
\end{align}
For the second term, due to $(X(\xi,s)-X(\theta,s))(Y(\xi,s)-Y(\theta,s))>0$, we obtain
\begin{align}\label{contraction3}
u_x(X(\xi,s),s)-u_x(Y(\xi,s),s)&=\int_{-L}^Lm_0(\theta)\Big(G'(X(\xi,s)-X(\theta,s))-G'(Y(\xi,s)-Y(\theta,s))\Big)d\theta\nonumber\\
&\leq M_1||X-Y||_{C(U_{t_1})}.
\end{align}
Combining \eqref{contraction1}, \eqref{contraction2}, and \eqref{contraction3}, we have
\begin{align*}
|T_X(\xi,t)-T_Y(\xi,t)|\leq J_1+J_2\leq 2M_1^2t_1||X-Y||_{C(U_{t_1})},
\end{align*}
which implies
$$||T_X-T_Y||_{C(U_{t_1})}\leq 2M_1^2t_1||X-Y||_{C(U_{t_1})}.$$
Inequality \eqref{contrac} shows that $T_X$ is a contraction map.

At last, by the contraction mapping theorem, the system \eqref{equi form} (or \eqref{Lagran dynamics}) has a unique solution in $C(U_{t_1})$.

On the other hand, using Lemma \ref{continuous lemma0} we can see $U=u^2-u_x^2\in C(\mathbb{R}\times[0,t_1])$, which means
$$\partial_tX\in C(U_{t_1}).$$
Hence, $X\in C_1^0(U_{t_1})$ and Part I is finished.

\textbf{Part II. (Regularity)} We show  $X$ obtained in the first part belongs to $C_1^{k+1}(U_{t_1}).$

From the first part, we can see solution $X$  belongs to $C_1^0(U_{t_1}).$
For this solution we have the following properties
$$X(\xi,t)-X(\theta,t)>0, \quad-L<\theta<\xi;\quad X(\xi,t)-X(\theta,t)<0,\quad\xi<\theta<L.$$
On the other hand,  $G(x)=\frac{1}{2}e^{-|x|}$ satisfies:
$$G'(x)=G(x), \quad x<0;\quad G'(x)=-G(x), \quad x>0.$$
We obtain
\begin{align}
\int_{-L}^\xi G'(X(\xi,s)-X(\theta,s))m_0(\theta)d\theta=-\int_{-L}^\xi G(X(\xi,s)-X(\theta,s))m_0(\theta)d\theta\label{1},\\
\int_{\xi}^L G'(X(\xi,s)-X(\theta,s))m_0(\theta)d\theta=\int_\xi^L G(X(\xi,s)-X(\theta,s))m_0(\theta)d\theta,\label{2}
\end{align}
Hence,
\begin{align*}
u_x(X(\xi,t),t)&=\int_{-L}^\xi G'(X(\xi,t)-X(\theta,t))m_0(\theta)d\theta+\int_{\xi}^LG'(X(\xi,t)-X(\theta,t))m_0(\theta)d\theta\\
&=-\int_{-L}^\xi G(X(\xi,t)-X(\theta,t))m_0(\theta)d\theta+\int_{\xi}^LG(X(\xi,t)-X(\theta,t))m_0(\theta)d\theta.
\end{align*}
We obtain
\begin{align}\label{short representation}
&\quad U(X(\xi,t),t)=u^2(X(\xi,t),t)-u_x^2(X(\xi,t),t)\nonumber\\
&=4\bigg(\int_{-L}^\xi G(X(\xi,t)-X(\theta,t))m_0(\theta)d\theta\bigg)\bigg(\int_{\xi}^LG(X(\xi,t)-X(\theta,t))m_0(\theta)d\theta\bigg).
\end{align}
Thus
\begin{align}\label{uniqueXT}
X(\xi,t)=\xi+4\int_0^t \bigg(&\int_{-L}^\xi G(X(\xi,s)-X(\theta,s))m_0(\theta)d\theta\bigg)\bigg(\int_{\xi}^LG(X(\xi,s)-X(\theta,s))m_0(\theta)d\theta\bigg)ds.
\end{align}
Because $X(\xi,t)$ is monotonic about $\xi$, its derivative exists for a.e. $\xi\in[-L,L]$. Differentiating with respect to $\xi$ shows that for a.e. $\xi\in[-L,L]$,
\begin{multline}\label{Xxi}
X_\xi(\xi,t)=1+4G(0)m_0(\xi)\int_0^t\bigg(\int_\xi^L G(X(\xi,s)-X(\theta,s))m_0(\theta)d\theta\\
-\int_{-L}^\xi G(X(\xi,s)-X(\theta,s))m_0(\theta)d\theta\bigg)ds\\
+4\int_0^t X_\xi(\xi,s)\bigg(\int_{-L}^\xi G'(X(\xi,s)-X(\theta,s))m_0(\theta)d\theta\bigg)\bigg(\int_{\xi}^LG(X(\xi,s)-X(\theta,s))m_0(\theta)d\theta\bigg)ds\\
+4\int_0^t X_\xi(\xi,s)\bigg(\int_{-L}^\xi G(X(\xi,s)-X(\theta,s))m_0(\theta)d\theta\bigg)\bigg(\int_{\xi}^LG'(X(\xi,s)-X(\theta,s))m_0(\theta)d\theta\bigg)ds,
\end{multline}
Due to \eqref{1} and \eqref{2}, the sum of the last two terms in \eqref{Xxi}  is zero, which leads to
\begin{align}\label{Xxixit}
&\quad X_\xi(\xi,t)=1+2m_0(\xi)\int_0^t\bigg(\int_\xi^L G(X(\xi,s)-X(\theta,s))m_0(\theta)d\theta\nonumber\\
&\qquad\qquad -\int_{-L}^\xi G(X(\xi,s)-X(\theta,s))m_0(\theta)d\theta\bigg)ds.
\end{align}
Because $m_0\in C_c^k(-L,L)$, we have $X_\xi\in C(U_{t_1})$ which means $X\in C^1_1(U_{t_1})$.

From \eqref{Xxixit}, we have
\begin{align}\label{upbound}
|X_\xi(\xi,t)|\leq 1+M_1M_\infty t_1=1+||m_0||_{C}||m_0||_{L^1}t_1~ \textrm{ for }~t\in[0,t_1].
\end{align}

Differentiating  \eqref{Xxixit}  with respect to $\xi$ shows that
\begin{align}\label{Xxixixit}
&X_{\xi\xi}(\xi,t)=1+2m_0'(\xi)\int_0^t\bigg(\int_\xi^L G(X(\xi,s)-X(\theta,s))m_0(\theta)d\theta\nonumber\\
&\qquad\qquad -\int_{-L}^\xi G(X(\xi,s)-X(\theta,s))m_0(\theta)d\theta\bigg)ds-2m_0^2(\xi)t\nonumber\\
&\qquad\qquad +2m_0(\xi)\int_0^t X_\xi(\xi,s) \int_{-L}^L G(X(\xi,s)-X(\theta,s))m_0(\theta)d\theta ds.
\end{align}
Hence, we obtain $X_{\xi\xi}\in C(U_{t_1})$ and
$$|X_{\xi\xi}(\xi,t)|\leq 1+2||m_0||_{C^1}||m_0||_{L^1}t_1+2||m_0||_{C}^2t_1+2||m_0||_C^2||m_0||_{L^1}^2t_1^2.$$
 We have $X\in C^2_1(U_{t_1})$.

Similarly, taking  derivative about $\xi$  for $k$ times on both sides of \eqref{Xxixit} gives that
$$X\in C^{k+1}_1(U_{t_1})$$
and \eqref{derivativeproperties} holds.

\end{proof}

\begin{remark}
Monotonicity of $X(\cdot ,t)$ plays an important role in our proof. Without monotonicity, the vector field for the Lagrange dynamics may be not Lipschitz.
From \eqref{Xxixit}, we know $\mathrm{supp}\{X_\xi(\cdot ,t)-1\}\subset(-L,L)$. Hence, we can continuously extend $X_\xi(\cdot ,t)$ globally as
$$X_\xi(\xi,t)=1~\textrm{ for }~\xi\in\mathbb{R}\setminus[-L,L].$$
\end{remark}

\subsection{Classical solutions to the mCH equation}\label{section2}
Next, we prove the short time existence and uniqueness of the classical solutions to \eqref{mCH}-\eqref{initial m}.

The following lemma shows that we can construct  classical solutions to the mCH equation \eqref{mCH}-\eqref{initial m} from the solutions to the Lagrange dynamics \eqref{Lagran dynamics}. Moreover, we show that  the support of $m(\cdot ,t)$ will not change.
\begin{lemma}\label{lagransolution}
Let $m_0\in C_c^k(-L,L)$ for some interger $k\geq1$. Assume that $X\in C_1^{k+1}(U_\delta)$ (for some $\delta>0$)  is the solution of \eqref{Lagran dynamics} and strictly monotonic about $\xi$ for any fixed time $t\in [0,\delta]$. $u(x,t)$, $m(x,t)$ are defined by \eqref{eq:umdef}. And assume $u\in C_1^{k+2}(\mathbb{R}\times[0,\delta])$. Then, $(u(x,t),m(x,t))$ is a classical solution of \eqref{mCH}-\eqref{initial m}.

Moreover, we have
\begin{equation}\label{support of m}
\emph{supp}\{m(\cdot ,t)\}\subset(-L,L),\quad t\in[0,\delta].
\end{equation}
\end{lemma}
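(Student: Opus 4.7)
The plan is to check the four requirements of a classical solution — the relation $m = u - u_{xx}$, the mCH evolution equation, the initial condition, and the support property — by exploiting the push-forward identity $m(X(\xi,t),t)\,X_\xi(\xi,t) = m_0(\xi)$, which is built into the delta-integral definition of $m$. Since $X(\cdot,t)$ is $C^{k+1}$ and strictly monotonic with $X_\xi>0$ on $U_\delta$ (the strict positivity following from the linear ODE that $X_\xi$ satisfies in $t$ together with the initial datum $X_\xi(\cdot,0)\equiv 1$, or equivalently from the bound \eqref{Xxiproperties}), the inverse $X^{-1}(\cdot,t)$ is $C^{k+1}$ on its image. Testing the delta-integral against $\varphi\in C_c^\infty(\mathbb{R})$ and changing variables $y=X(\theta,t)$ gives the classical formula
\[
m(x,t)=\frac{m_0(X^{-1}(x,t))}{X_\xi(X^{-1}(x,t),t)}\quad\text{on }X([-L,L],t),
\]
with $m\equiv 0$ elsewhere. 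In particular $m(\cdot,t)\in C^k(\mathbb{R})$, and the push-forward identity holds pointwise.

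For $m=u-u_{xx}$, the same change of variables in the formula for $u$ yields $u(\cdot,t)=G\ast m(\cdot,t)$. Since $G$ is the fundamental solution of $1-\partial_x^2$, the identity $u-u_{xx}=m$ holds in the distributional sense, and by the assumed regularities ($u\in C^{k+2}$, $m\in C^k$) it holds classically.

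To derive the mCH equation, differentiate $m(X(\xi,t),t)\,X_\xi(\xi,t)=m_0(\xi)$ in $t$. Using $\dot X=U$ and the commutator identity $\partial_t X_\xi=U_x(X(\xi,t),t)\,X_\xi(\xi,t)$ (obtained by interchanging $\partial_\xi$ and $\partial_t$ in \eqref{equi form}), and then dividing through by the positive factor $X_\xi$, one finds $m_t+Um_x+U_xm=0$ along the characteristics, i.e.\ $m_t+[(u^2-u_x^2)m]_x=0$. The range $\{X(\xi,t):\xi\in[-L,L]\}$ contains $\su\{m(\cdot,t)\}$ and both sides vanish outside it, so the equation holds for every $x$; the initial condition $m(x,0)=m_0(x)$ is immediate from $X(\xi,0)=\xi$.

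For the support property, the key observation comes from the factorized representation \eqref{short representation}: at $\xi=\pm L$ one of the two factors is an integral over an empty interval and hence vanishes, so $\dot X(\pm L,t)=0$, and together with $X(\pm L,0)=\pm L$ this yields $X(\pm L,t)=\pm L$ throughout $[0,\delta]$. Strict monotonicity of $X(\cdot,t)$ then forces $X([-L,L],t)=[-L,L]$, and since $\su\{m_0\}\subset[-L+\eta,L-\eta]$ for some $\eta>0$, monotonicity applied on this subinterval gives $\su\{m(\cdot,t)\}=X(\su\{m_0\},t)\subset(-L,L)$. The main technical point will be the $t$-differentiation in the third step — in particular verifying $\partial_t X_\xi=U_x(X,t)\,X_\xi$ cleanly despite $U_x$ involving the singular kernel $G'$ — but thanks to the strict monotonicity of $X$ and the resulting smoothness of $U(X(\xi,t),t)$ in $\xi$ (cf.\ \eqref{short representation}), this goes through; the remainder is bookkeeping.
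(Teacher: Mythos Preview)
Your argument is correct. The main difference from the paper is in how the PDE is verified: the paper tests $m$ against $\phi\in C_c^\infty(\mathbb{R})$, computes $(\phi,m_t)=\frac{d}{dt}\int m_0(\theta)\phi(X(\theta,t))\,d\theta=\int m_0(\theta)\phi'(X)\,U(X,t)\,d\theta=-\int\phi\,(Um)_x\,dx$, and concludes $m_t+(Um)_x=0$ in two lines. Your route---differentiating the push-forward identity $m(X,t)X_\xi=m_0$ in $t$ and using $\partial_tX_\xi=U_x(X,t)X_\xi$---is the classical Lagrangian computation and is slightly longer, but it makes the mechanism (transport of $m$ along characteristics) more explicit; note that your worry about the singularity of $G'$ evaporates once you use the hypothesis $u\in C_1^{k+2}$, since then $U_x=2u_x m$ is continuous and the chain rule applies directly. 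For the support property both proofs are really the same observation that $\dot X(\pm L,t)=0$: the paper shows $u_x(X(L,t),t)=-u(X(L,t),t)$ directly from $G'(x)=-G(x)$ for $x>0$, while you invoke the factorized form \eqref{short representation}; just be aware that \eqref{short representation} is derived in the paper \emph{after} this lemma (in the regularity part of Theorem~\ref{noregularityresults}), so if you want a self-contained argument here the paper's one-line computation is preferable.
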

\begin{proof}
We denote $(\phi,\psi):=\int_{\mathbb{R}}\phi(x)\psi(x)dx.$ For any test function $\phi\in C_c^\infty(\mathbb{R})$, we have
\begin{align*}
(\phi,m)&=\int_{\mathbb{R}}\phi(x)\int_{-L}^Lm_0(\theta)\delta(x-X(\theta,t))d\theta dx=\int_{-L}^Lm_0(\theta)\phi(X(\theta,t))d\theta.
\end{align*}
\begin{align*}
(\phi, m_t)&=\frac{d}{dt}(\phi,m)=\int_{-L}^Lm_0(\theta)\phi'(X(\theta,t))\dot{X}(\theta,t)d\theta\\
&=\int_{-L}^Lm_0(\theta)\phi'(X(\theta,t))U(X(\theta,t),t)d\theta=\int_{\mathbb{R}}\phi'(x)U(x,t)m(x,t)dx\\
&=-\int_{\mathbb{R}}\phi(x)(U(x,t)m(x,t))_xdx.
\end{align*}
Since that $\phi$ is arbitrary, we have $$m_t+(Um)_x=m_t+[(u^2-u_x^2)m]_x=0.$$

Next, we prove \eqref{support of m}. Because $X(\xi,t)$ is monotonic and $G'(x)=-G(x)$ for $x>0$, we obtain
$$u_x(X(L,t),t)=\int_{-L}^LG'(X(L,t)-X(\theta,t))m_0(\theta)d\theta=-u(X(L,t),t).$$
Hence, we have
$$\dot{X}(L,t)=u^2(X(L,t),t)-u_x^2(X(L,t),t)=0~\textrm{ for }~t\in[0,\delta],$$
which implies
$$X(L,t)\equiv X(L,0)=L.$$
Similarly, we have
$X(-L,t)\equiv X(-L,0)=-L.$

For any $\phi\in C^\infty_c(\mathbb{R})$,  $\mathrm{supp}\{\phi\}\subset\mathbb{R}\setminus(-L,L)$ gives
\begin{equation*}
(\phi,m)=\int_{-L}^Lm_0(\theta)\phi(X(\theta,t))d\theta=0.
\end{equation*}
Hence, \eqref{support of m} holds.
\end{proof}

\begin{remark}
Consider the following general equation with $\alpha>0$,
\begin{align}\label{eq:alphaequation}
m_t+[m(u^2-\alpha^2u_x^2)]_x=0.
\end{align}
When $\mathrm{supp}\{m_0\}\subset(-L,L)$, the support of the classical solution $m(x,t)$ to \eqref{eq:alphaequation} is also contained in $(-L,L)$.
Indeed, by scaling $\tilde{u}(x,t)=u(\alpha x,\alpha t)$ and $\tilde{m}(x,t)=m(\alpha x,\alpha t)=\tilde{u}(x,t)-\tilde{u}_{xx}(x,t)$, $\tilde{u}$ and $\tilde{m}$ satisfy
$$\tilde{m}_t+[(\tilde{u}^2-\tilde{u}_x^2)\tilde{m}]_x=0.$$
Due to $\mathrm{supp}\{\tilde{m}_0\}\subset(-\alpha L,\alpha L)$, by \eqref{support of m} we know $\mathrm{supp}\{\tilde{m}(\cdot,t)\}\subset(-\alpha L,\alpha L)$. Hence, we have
$\mathrm{supp}\{m(\cdot,t)\}\subset(-L,L)$.
\end{remark}

Next, we present a useful lemma which is similar to Lemma \ref{continuous lemma0}.
\begin{lemma}\label{continuous lemma}
Assume $g\in C(U_{t_1})$  and $g(\cdot ,t)\in C_c(-L,L)$ for any fixed time $t\in[0,t_1]$. Let $X\in C_1^1(U_{t_1})$ satisfy \eqref{Xxiproperties} for some constants $C_2>C_1>0$. Set
$$A(x,t):=\int_{-L}^L\delta(x-X(\theta,t))g(\theta,t)d\theta.$$
Then, we have $A\in C(\mathbb{R}\times[0,t_1])$  and
$$\int_{-L}^LG''(x-X(\theta,t))g(\theta,t)d\theta\in C(\mathbb{R}\times[0,t_1]).$$
\end{lemma}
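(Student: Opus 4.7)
The plan is to exploit the strict monotonicity of $X(\cdot,t)$ (guaranteed by $X_\xi \ge C_1 > 0$) to resolve the Dirac delta via change of variables, then use the compact support hypothesis on $g$ to control the boundaries, and finally reduce the $G''$ statement to the delta statement using the distributional identity $G'' = G - \delta$.

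First, since $X(\cdot,t)$ is a $C^1$ diffeomorphism from $[-L,L]$ onto $[X(-L,t), X(L,t)]$ for each $t$, I would let $\Theta(x,t)$ denote its inverse on this range. Substituting $x = X(\theta,t)$ in the defining integral (tested against $\phi \in C_c^\infty(\mathbb{R})$) produces the pointwise formula
\begin{equation*}
A(x,t) = \frac{g(\Theta(x,t), t)}{X_\xi(\Theta(x,t), t)} \text{ for } x \in [X(-L,t), X(L,t)], \qquad A(x,t) = 0 \text{ otherwise}.
\end{equation*}
Joint continuity of $\Theta$ in $(x,t)$ on the open set $\{X(-L,t) < x < X(L,t)\}$ follows from the Lipschitz estimate $C_1|\theta - \theta'| \le |X(\theta,t) - X(\theta',t)|$ combined with continuity of $X$ in $t$; by composition $A$ is continuous there, and $A \equiv 0$ trivially on the exterior. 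The delicate step is matching the two pieces across the moving boundary curves $x = X(\pm L, t)$; here the assumption $g(\cdot,t) \in C_c(-L,L)$, combined with joint continuity of $g$ on the compact set $U_{t_1}$, forces $g(\pm L, t) = 0$ for all $t$, so the interior formula tends to $0$ as $x \to X(\pm L, t)$, matching the exterior value.

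For the $G''$ claim, I would use the identity $(1-\partial_{xx})G = \delta$, which gives $G'' = G - \delta$ distributionally, hence
\begin{equation*}
\int_{-L}^L G''(x - X(\theta,t)) g(\theta,t)\, d\theta = \int_{-L}^L G(x - X(\theta,t)) g(\theta,t)\, d\theta - A(x,t).
\end{equation*}
The first integral is manifestly continuous on $\mathbb{R} \times [0, t_1]$ by uniform continuity of $G$ and joint continuity of $X$ and $g$ on a compact set, while continuity of $A$ is the first half of the lemma. The main obstacle I anticipate is exactly the boundary matching described above: without the compact-support hypothesis on $g(\cdot,t)$, the piecewise formula for $A$ could jump at $x = X(\pm L, t)$, so the whole argument hinges on using this hypothesis to conclude $g(\pm L, t) = 0$ uniformly in $t$.
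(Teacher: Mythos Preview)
Your proposal is correct and follows essentially the same route as the paper: invert $X(\cdot,t)$ to obtain the explicit formula $A(x,t)=g(\Theta(x,t),t)/X_\xi(\Theta(x,t),t)$ on the image interval (the paper writes this as $g(Z(x,t),t)Z_x(x,t)$), prove joint continuity of the inverse via the same Lipschitz bound $C_1|\xi-\eta|\le |X(\xi,t)-X(\eta,t)|$ together with continuity of $X$ in $t$, use $g(\pm L,t)=0$ to match at the moving boundary, and reduce the $G''$ claim to the $\delta$ claim via $G''=G-\delta$. The only cosmetic difference is that the paper organizes the continuity argument into three explicit cases (exterior, interior, boundary) and verifies continuity of $Z_x$ separately, whereas you fold these into a single paragraph; the underlying ideas are identical.
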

\begin{proof}
From the proof of Lemma \ref{lagransolution}, we know $[X(-L,t),X(L,t)]=[-L,L]$. However, in order to make no confusion, we still use $[X(-L,t),X(L,t)]$ in this proof.

By using the inverse function theorem, for any $t\in[0,t_1]$, there is a continuously differentiable function $Z(\cdot ,t)\in C^1[X(-L,t),X(L,t)]$  such that
$$Z(X(\theta,t),t)=\theta \textrm{ for }\theta\in[-L,L]$$
and
$$X(Z(x,t),t)=x \textrm{ for }x\in[X(-L,t),X(L,t)].$$
Moreover, we have
$$\frac{1}{C_2}\leq Z_x(x,t)\leq\frac{1}{C_1}.$$

Changing variable and using the property of Dirac measure, we have
\begin{align}\label{another representation of m}
A(x,t)&=\int_{-L}^L\delta(x-X(\theta,t))g(\theta,t)d\theta=\int_{X(-L,t)}^{X(L,t)}\delta(x-y)g(Z(y,t),t)Z_x(y,t)dy\nonumber\\
&=\left\{
         \begin{array}{ll}
           0,\quad \textrm{ for }x> X(L,t)\textrm{ or }x< X(-L,t); \\
           g(Z(x,t),t)Z_x(x,t),\quad \textrm{ for }x\in[X(-L,t),X(L,t)].
         \end{array}
       \right.
\end{align}
Next, we separate the proof into three parts, which is similar to the proof of Lemma \ref{continuous lemma0}.

\emph{Step 1}. Continuity at $(x,t)\in\mathbb{R}\times[0,t_1]$ when $x>X(L,t)$. Then case for  $x<X(-L,t)$ is similar.

In this case, we have $A(x,t)=0$.
For any $(y,s)$ closed to $(x,t)$ and because $X\in C(U_{t_1})$, we can assume $y\geq X(L,s)$. Because $g(\cdot ,s)\in C_c(-L,L)$, we have $A(y,s)=0$. Hence, $A$ is continuous at $(x,t)$.

\emph{Step 2}. Continuity at $(x,t)\in\mathbb{R}\times[0,t_1]$ when $x=X(\xi,t)$ for some $\xi\in(-L,L)$. This means $x\in(X(-L,t),X(L,t))$.

Due to the continuity of $X$, for $(y,s)$ closed enough to $(x,t)$, we can assume $y\in [X(-L,s),X(L,s)]$. In other words, there exists $\eta\in[-L,L]$ such that $X(\eta,s)=y$. Because
\begin{align*}
\quad|A(y,s)-A(x,t)|=|g(Z(x,t),t)Z_x(x,t)-g(Z(y,s),s)Z_x(y,s)|,
\end{align*}
we only have to prove $Z$ and $Z_x$ are continuous at $(x,t)$.
 \eqref{will be used} shows that
\begin{align}\label{one}
|Z(x,t)-Z(y,s)|=|\xi-\eta|\leq \frac{1}{C_1}(|x-y|+|X(\xi,t)-X(\xi,s)|),
\end{align}
which means $Z$ is continuous at $(x,t)$.

Because $Z_x(x,t)=\frac{1}{X_\xi(\xi,t)}$ and $Z_x(y,s)=\frac{1}{X_\xi(\eta,s)}$,  we have
\begin{align*}
|Z_x(x,t)-Z_x(y,s)|&=\bigg|\frac{1}{X_\xi(\xi,t)}-\frac{1}{X_\xi(\eta,s)}\bigg|\leq\frac{1}{C_1^2}|X_\xi(\xi,t)-X_\xi(\eta,s)|.
\end{align*}
From \eqref{one} we can see $(\eta,s)\rightarrow(\xi,t)$ as $(y,s)\rightarrow(x,t)$. Together with $X\in C_1^1(U_{t_1})$ implies the continuity of $Z_x(x,t)$ at $(x,t)$.

Hence,  $A(x,t)$ is continuous at $(x,t)$.

\emph{Step 3}. $x=X(L,t)$. The case $x=X(-L,t)$ is similar.

For $(y,s)$ closed to $(x,t)$, we have two cases. When $y>X(L,s)$, we can use Step 1. When there exists $\xi\in(-L,L)$ such that $y=X(\xi,s)$, we can use Step 2.

Put Step 1,2,3 together and we can see $A\in C(\mathbb{R}\times [0,t_1])$.

At last, because $G(x)$ is fundamental solution for Helmholtz operator $1-\partial_{xx}$, we have
\begin{align*}
\int_{-L}^LG''(x-X(\theta,t))g(\theta,t)d\theta=\int_{-L}^LG(x-X(\theta,t))g(\theta,t)d\theta-\int_{-L}^L\delta(x-X(\theta,t))g(\theta,t)d\theta.
\end{align*}
Hence, $\int_{-L}^LG''(x-X(\theta,t))g(\theta,t)d\theta\in C(\mathbb{R}\times[0,t_1]).$

\end{proof}

Now we prove that $u(x,t),m(x,t)$ defined by \eqref{eq:umdef} is a unique classical solution of \eqref{mCH}-\eqref{initial m}.
\begin{theorem}\label{regular solution}
Assuming $m_0\in C_c^k(-L,L)$ $(k\in \mathbb{N}, k\geq1)$. Then, for 
$$t_1<\frac{1}{2||m_0||_{L^1}^2+||m_0||_{L^1}||m_0||_{L^\infty}},$$ 
$u$ given by \eqref{eq:umdef} belongs to $C^{k+2}_1(\mathbb{R}\times[0,t_1])$ and $m$ belongs to $C^k_1(\mathbb{R}\times[0,t_1])$. $(u(x,t),m(x,t))$ is a unique classical solution to \eqref{mCH}-\eqref{initial m}.
\end{theorem}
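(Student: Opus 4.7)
The plan is to invoke Theorem \ref{noregularityresults} to get the Lagrange flow $X$ with its regularity and monotonicity, then deduce the regularity of $m$ by a change of variables using the spatial inverse $Z$ of $X(\cdot,t)$, next extract the regularity of $u$ from the representation $u=G\ast m$ together with the Helmholtz identity $u_{xx}=u-m$, and finally apply Lemma \ref{lagransolution} to conclude that $(u,m)$ is a classical solution. Uniqueness will follow by showing that every classical solution in the stated class must generate, via its characteristic ODE, a solution of \eqref{Lagran dynamics}, whereupon the uniqueness clause of Theorem \ref{noregularityresults} forces it to coincide with $X$.

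First I would apply Theorem \ref{noregularityresults}: under the stated bound on $t_1$, there exist $0<C_1<C_2$ and a unique $X\in C_1^{k+1}(U_{t_1})$ solving \eqref{equi form} with $C_1\le X_\xi\le C_2$ throughout $U_{t_1}$. Because $X(\pm L,t)\equiv\pm L$ (established during the proof of Lemma \ref{lagransolution} using $u_x(X(\pm L,t),t)=\mp u(X(\pm L,t),t)$, whence $U=0$ at the endpoints), for each fixed $t$ the map $X(\cdot,t)$ is a $C^{k+1}$ diffeomorphism of $[-L,L]$ onto itself, so its spatial inverse $Z(\cdot,t)$ lies in $C^{k+1}[-L,L]$ with $Z_x=1/X_\xi\in[1/C_2,1/C_1]$. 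Differentiating $Z(X(\xi,t),t)=\xi$ in $t$ and using $\dot X=U(X,t)$ together with the continuity of $U$ (via Lemma \ref{continuous lemma0}) gives $Z_t$ and the continuity of $(Z_x,Z_t)$ in $(x,t)$.

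Next I would derive the regularity of $m$. The change of variable $y=X(\theta,t)$ in \eqref{eq:umdef} (justified as in Lemma \ref{continuous lemma}) gives
\begin{align*}
m(x,t)=\begin{cases} m_0(Z(x,t))\,Z_x(x,t),& x\in[-L,L],\\ 0,& x\notin[-L,L],\end{cases}
\end{align*}
and since $m_0\in C_c^k(-L,L)$ vanishes in a neighborhood of $\pm L$, the two branches glue smoothly across $\pm L$, yielding $m(\cdot,t)\in C_c^k(\mathbb{R})$ with derivatives continuous in $(x,t)$ through those of $Z$; hence $m\in C_1^k(\mathbb{R}\times[0,t_1])$. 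From $u=G\ast m$, differentiation under the integral (with $G,G'\in L^\infty$ and $m$ compactly supported) shows $u,u_x\in C(\mathbb{R}\times[0,t_1])$, and the Helmholtz identity $u_{xx}=u-m$ (valid classically since all three terms are continuous) bootstraps $u(\cdot,t)\in C^{k+2}$, while $u_t=G\ast m_t$ delivers continuity of $u_t$. Thus $u\in C_1^{k+2}(\mathbb{R}\times[0,t_1])$, and Lemma \ref{lagransolution} gives that $(u,m)$ is a classical solution of \eqref{mCH}--\eqref{initial m} with $\su\{m(\cdot,t)\}\subset(-L,L)$.

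For uniqueness, suppose $(\tilde u,\tilde m)$ is another classical solution in the same regularity class with initial datum $m_0$. Solving the ODE $\dot{\tilde X}(\xi,t)=(\tilde u^2-\tilde u_x^2)(\tilde X(\xi,t),t)$, $\tilde X(\xi,0)=\xi$, yields a smooth flow, and the conservation form of \eqref{mCH} yields $\tilde m(\tilde X(\xi,t),t)\tilde X_\xi(\xi,t)=m_0(\xi)$. Substituting into $\tilde u=G\ast\tilde m$ and changing variables shows that $\tilde X$ satisfies \eqref{equi form}, so the uniqueness clause of Theorem \ref{noregularityresults} forces $\tilde X=X$ on $[-L,L]\times[0,t_1]$, hence $\tilde m=m$ and $\tilde u=u$. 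The main technical obstacle I anticipate is tracking the joint $(x,t)$-continuity of the higher-order derivatives of $Z$ (equivalently of $m$) across the boundary points $x=\pm L$, where the piecewise formula for $m$ meets zero; the vanishing of $m_0$ near $\pm L$ together with Lemmas \ref{continuous lemma0} and \ref{continuous lemma} is tailored precisely for this and should resolve it.
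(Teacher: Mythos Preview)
Your approach is correct and takes a genuinely different route to the regularity portion of the theorem. The paper establishes the spatial regularity of $u$ directly from the representation $u(x,t)=\int G(x-X(\theta,t))m_0(\theta)\,d\theta$: it repeatedly integrates by parts (producing factors of $1/X_\theta$) to transfer derivatives onto $m_0$, and invokes Lemma~\ref{continuous lemma} at each stage to control the resulting $G''$-kernels. Only after obtaining $u\in C_1^{k+2}$ does the paper turn to $m$, and even then it handles $\partial_t m$ via the identity $m(X(\xi,t),t)=m_0(\xi)/X_\xi(\xi,t)$ along characteristics---which is precisely your starting point. You, by contrast, begin with $m$: the formula $m(x,t)=m_0(Z(x,t))Z_x(x,t)$ reduces the regularity of $m$ to that of the inverse map $Z$, after which the Helmholtz identity $u_{xx}=u-m$ bootstraps to $u\in C_1^{k+2}$. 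This is more economical and sidesteps the integration-by-parts ladder almost entirely, though it does require tracking the joint $(x,t)$-continuity of $\partial_x^j Z$ up to order $k+1$ and of $Z_{xt}$; note that $X\in C_1^{k+1}$ as defined in~\eqref{functionspace} does not by itself guarantee mixed derivatives such as $X_{\xi t}$ are continuous, so you will need the explicit integral representation~\eqref{Xxixit} of $X_\xi$ at that step. Your uniqueness argument matches the paper's Part~II essentially step for step; the paper adds the explicit formula $\tilde X_\xi=\exp\bigl(2\int_0^t(\tilde m\,\tilde u_x)(\tilde X,s)\,ds\bigr)>0$ to justify the strict monotonicity needed for the change of variables showing $\tilde X$ solves~\eqref{equi form}, which you should make explicit as well.
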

\begin{proof}
Let $M_1:=||m_0||_{L^1}$ and $M_\infty:=||m_0||_{L^\infty}$.
For $t_1<\frac{1}{2M_1^2+M_1M_\infty}$, by Theorem \ref{noregularityresults}, we know there exist a solution $X\in C_1^{k+1}(U_{t_1})$ to \eqref{Lagran dynamics} satisfying \eqref{Xxiproperties} for $C_1,C_2$ given by \eqref{C_2} and \eqref{C_1}.

\textbf{Part I. Regularity.}

\emph{Step 1.}
When $k=1$, we have $X\in C_1^{2}(U_{t_1})$ and we prove  $u\in C^3_1(\mathbb{R}\times[0,t_1])$.

Taking derivative about $t$ for $u(x,t)$ in \eqref{eq:umdef} gives that
\begin{align*}
\partial_tu(x,t)=-\int_{-L}^LU(X(\theta,t),t)G'(x-X(\theta,t))m_0(\theta)d\theta.
\end{align*}
Because $m_0(\theta)U(X(\theta,t),t)\in C(U_{t_1})$  and $m_0(\cdot )U(X(\cdot ,t),t)\in C_c(-L,L)$ for any fix time $t\in[0,t_1]$, Lemma \ref{continuous lemma} shows that $\partial_tu\in C(\mathbb{R}\times[0,t_1])$.

For the spatial variable $x$, integration by parts leads to
\begin{align*}
u_x(x,t)=\int_{-L}^LG'(x-X(\theta,t))m_0(\theta)d\theta=\int_{-L}^L G(x-X(\theta,t))\partial_\theta\bigg(\frac{m_0(\theta)}{X_\theta(\theta,t)}\bigg)d\theta,
\end{align*}
\begin{align*}
u_{xx}(x,t)&=\int_{-L}^LG'(x-X(\theta,t))\partial_\theta\bigg(\frac{m_0(\theta)}{X_\theta(\theta,t)}\bigg)d\theta,
\end{align*}
and
\begin{align}\label{uxxx}
u_{xxx}(x,t)=\int_{-L}^LG''(x-X(\theta,t))\partial_\theta\bigg(\frac{m_0(\theta)}{X_\theta(\theta,t)}\bigg)d\theta.
\end{align}
Set
$g(\theta, t):=\partial_\theta\bigg(\frac{m_0(\theta)}{X_\theta(\theta,t)}\bigg).$ Then,
$g(\theta,t)$ satisfies the assumption of Lemma \ref{continuous lemma}.
Hence
$$u_{xxx}\in C(\mathbb{R}\times[0,t_1])~\textrm{ and }~u\in C^3_1(\mathbb{R}\times[0,t_1]).$$

\emph{Step 2.} When $k=2$, we have $X\in C_1^{3}(U_{t_1})$.
Integration by parts changes \eqref{uxxx}  into
\begin{align*}
u_{xxx}(x,t)=\int_{-L}^LG'(x-X(\theta,t))\partial_\theta\bigg(\frac{1}{X_\theta}\partial_\theta\bigg(\frac{m_0(\theta)}{X_\theta(\theta,t)}\bigg)\bigg)d\theta.
\end{align*}
Hence
$$\partial_x^4u(x,t)
=\int_{-L}^LG''(x-X(\theta,t))\partial_\theta\bigg(\frac{1}{X_\theta}\partial_\theta\bigg(\frac{m_0(\theta)}{X_\theta(\theta,t)}\bigg)\bigg)d\theta.$$
And Lemma \ref{continuous lemma} shows that $u\in C^4_1(\mathbb{R}\times[0,t_1])$.

\emph{Step 3.}
If $k>2$, we can keep using integration by parts and Lemma \ref{continuous lemma}  and  obtain
$$u\in C^{k+2}_1(\mathbb{R}\times[0,t_1]).$$

\emph{Step 4.} Because $m=u-u_{xx}$, from the above steps, we already know $\partial_x^km\in C(\mathbb{R}\times[0,t_1])$. In this step, we show $\partial_tm\in C(\mathbb{R}\times[0,t_1]).$ Due to \eqref{support of m}, we only have to show $\partial_t m\in C([-L,L]\times[0,t_1])$. 
From \eqref{another representation of m}, for $x\in(-L,L)$ and  $X(\xi,t)=x$, we have
\begin{align}\label{m1}
m(X(\xi,t),t)=m_0(Z(x,t))Z_x(x,t)=\frac{m_0(\xi)}{X_\xi(\xi,t)}.
\end{align}
Taking derivative of both sides of  \eqref{m1}, we have
\begin{align}\label{m2}
\frac{d}{dt}m(X(\xi,t),t)&=m_x(X(\xi,t),t)X_t(\xi,t)+\partial_tm(X(\xi,t),t)\nonumber\\
&=[m_x(u^2-u_x^2)](x,t)+m_t(x,t),
\end{align}
and
\begin{align}\label{m3}
\frac{d}{dt}\frac{m_0(\xi)}{X_\xi(\xi,t)}=-\frac{2m_0(\xi)mu_x(X(\xi,t),t)}{X_\xi(\xi,t)}=-2m^2u_x(x,t).
\end{align}
Combining \eqref{m1}, \eqref{m2} and \eqref{m3}, we obtain
$$m_t=-[m(u^2-u_x^2)]_x\in C([-L,L]\times[0,t_1]).$$

From the above proof (or Lemma \ref{lagransolution}), we can see that $u(x,t),m(x,t)$ is a classical solution to \eqref{mCH}-\eqref{initial m}.

\textbf{Part II. Uniqueness of the classical solution to \eqref{mCH}-\eqref{initial m}}.

Assume there is another classical solution $m_1\in C_1^k(\mathbb{R}\times[0,t_1])$ to \eqref{mCH}-\eqref{initial m}. $u_1=G\ast m_1\in  C_1^{k+2}(\mathbb{R}\times[0,t_1]).$ We prove that $u_1(x,t)$ can also be defined by the solution $X(\xi,t)$ to \eqref{Lagran dynamics}, which means
\begin{align}\label{u1=u}
u_1(x,t)=\int_{-L}^LG(x-X(\theta,t))m_0(\theta)d\theta=u(x,t).
\end{align}
To this end, define another characteristics $Y(\xi,t)$ by
\begin{align*}
\dot{Y}(\xi,t)=(u_1^2-\partial_xu_{1}^2)(Y(\xi,t),t),
\end{align*}
subject to
$$Y(\xi,0)=\xi\in\mathbb{R}.$$
By standard ODE theory, we can obtain a solution $Y\in C_1^{k+1}(\mathbb{R}\times[0,t_1])$.

\emph{Step 1.} We prove
$$u_1(x,t)=\int_{-L}^LG(x-Y(\theta,t))m_0(\theta)d\theta.$$

Taking derivative with respect to $\xi$ shows that
\begin{align}\label{unique1}
\dot{Y}_{\xi}(\xi,t)=2(m_1\partial_xu_1)(Y(\xi,t),t)Y_\xi(\xi,t).
\end{align}
Taking time derivative of $m_1(Y(\xi,t),t)Y_\xi(\xi,t)$ gives that
\begin{align*}
\quad\frac{d}{dt}[m_1(Y(\xi,t),t)Y_\xi(\xi,t)]&=[\partial_tm_1(Y,t)+\partial_xm_1(Y,t)Y_t]Y_\xi+m_1(Y,t)Y_{\xi t}\nonumber\\
&=[\partial_tm_1+(u_1^2-\partial_xu_1^2)\partial_xm_1]Y_\xi+2\partial_xu_1m_1^2Y_\xi\nonumber\\
&=[\partial_tm_1+[(u_1^2-\partial_xu_1^2)m_1]_x]Y_\xi=0.
\end{align*}
This implies
\begin{align}\label{u11}
m_1(Y(\theta,t),t)Y_\xi(\theta,t)=m_0(\theta),~\textrm{ for }~\theta\in[-L,L].
\end{align}
Hence, we can see
\begin{align}\label{u1}
u_1(x,t)=\int_{\mathbb{R}}G(x-y)m_1(y,t)dy&=\int_\mathbb{R}G(x-Y(\theta,t))m_1(Y(\theta,t),t)Y_\xi(\theta,t)d\theta\nonumber\\
&=\int_{-L}^LG(x-Y(\theta,t))m_0(\theta)d\theta.
\end{align}

\emph{Step 2.} We prove $Y(\xi,t)=X(\xi,t)$.

From \eqref{u1}, we obtain
\begin{align*}
&\quad\dot{Y}(\xi,t)=(u_1^2-\partial_xu_1^2)(Y(\xi,t),t)\\
&=\bigg(\int_{-L}^LG(Y(\xi,t)-Y(\theta,t))m_0(\theta)d\theta\bigg)^2-\bigg(\int_{-L}^LG'(Y(\xi,t)-Y(\theta,t))m_0(\theta)d\theta\bigg)^2,
\end{align*}
which means that $Y(\xi,t)$ is also a solution to \eqref{Lagran dynamics}.

From Theorem \ref{noregularityresults}  we know that the strictly monotonic solution to \eqref{Lagran dynamics} is unique. Therefore, to prove $Y(\xi,t)=X(\xi,t)$, we only have to prove $Y(\cdot ,t)$ is strictly monotonic for $t\in[0,t_1]$.

Combining \eqref{unique1} and \eqref{u11}  gives that
\begin{equation*}
Y_\xi(\xi,t)=\exp\bigg(2\int_0^t(m_1\partial_xu_1)(Y(\xi,s),s)ds\bigg),\quad (\xi,t)\in [-L,L]\times[0,t_1].
\end{equation*}
Because $||Y||_{L^\infty([-L,L]\times[0,t_1])}<+\infty$, $u_1\in C^{k+2}_1(\mathbb{R}\times[0,t_1])$ and $m_1\in C^k_1(\mathbb{R}\times[0,t_1])$, the minimum and maximum of $(m_1\partial_xu_1)(Y(\xi,s),s)$ can be obtained on $[-L,L]\times[0,t_1]$.
Hence
\begin{align*}
e^{2K_1t_1}\leq Y_\xi(\xi,t)\leq e^{2K_2t_1},~\textrm{ for }~t\in[0,t_1],
\end{align*}
where
$$K_1=\min_{(\xi,s)\in [-L,L]\times[0,t_1]}(m_1\partial_xu_1)(Y(\xi,s),s)$$
and
$$K_2=\max_{(\xi,s)\in [-L,L]\times[0,t_1]}(m_1\partial_1u_1)(Y(\xi,s),s).$$
Hence, $Y(\cdot ,t)$ is strictly monotonic for $t\in[0,t_1]$.

Combining Step 1 and Step 2, we obtain \eqref{u1=u}.
\end{proof}

\begin{remark}
\eqref{u11} also can be easily obtained by \cite[Theorem 5.34]{Villani}

The strictly monotonic property of $X$ plays an crucial role in the proof of the above Theorem. Whenever $X$ is strictly monotonic, we can use integration by parts to obtain the regularity of $u(x,t)$. Conversely, if $m(x,t)$ is a classical solution, then the characteristics for the mCH equation is strictly monotonic.
\end{remark}

For the convenience of the rest proof, we summarize the results in the proof of Part II of Theorem \ref{regular solution} and give a corollary.
\begin{corollary}\label{increasing}
Let $m_0\in C_c^k(-L,L)$ $(k\in \mathbb{N}, k\geq1)$ and
$X\in C^{k+1}_1([-L,L]\times[0,T])$
be the solution to \eqref{Lagran dynamics}. $u\in C^{k+2}_1(\mathbb{R}\times[0,T])$, $m\in C_1^k(\mathbb{R}\times[0,T])$ is a classical solution to \eqref{mCH}-\eqref{initial m}.  Then, we have
\begin{equation}\label{Xincrease}
X_\xi(\xi,t)=\exp\bigg(2\int_0^t(mu_x)(X(\xi,s),s)ds\bigg)~\textrm{ for }~(\xi,t)\in [-L,L]\times[0,T]
\end{equation}
and
\begin{align}\label{Xderivativebound}
e^{2K_1T}\leq X_\xi(\xi,t)\leq e^{2K_2T}~\textrm{ for }~(\xi,t)\in [-L,L]\times[0,T],
\end{align}
where
$$K_1=\min_{(\xi,s)\in [-L,L]\times[0,T]}(mu_x)(X(\xi,s),s)$$
and
$$K_2=\max_{(\xi,s)\in [-L,L]\times[0,T]}(mu_x)(X(\xi,s),s).$$

Moreover, we have
\begin{align}\label{keepsign}
m(X(\xi,t),t)X_\xi(\xi,t)=m_0(\xi)~\textrm{ for }~(\xi,t)\in(-L,L)\times[0,T].
\end{align}

\end{corollary}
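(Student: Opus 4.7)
The plan is to mimic the argument that appeared in Part II of Theorem \ref{regular solution} (for the auxiliary characteristic $Y$), but now applied directly to the given characteristic $X$. Since we assume the full regularity $X\in C_1^{k+1}$, $u\in C_1^{k+2}$, $m\in C_1^{k}$ with $k\ge 1$, every differentiation below is classical and no mollification is needed.

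First I would derive the ODE for $X_\xi$. Differentiating $\dot X(\xi,t)=(u^2-u_x^2)(X(\xi,t),t)$ in $\xi$ and applying the chain rule gives
\begin{equation*}
\dot X_\xi(\xi,t)=2\bigl(uu_x-u_xu_{xx}\bigr)(X(\xi,t),t)\,X_\xi(\xi,t)=2(mu_x)(X(\xi,t),t)\,X_\xi(\xi,t),
\end{equation*}
using $m=u-u_{xx}$. This is a linear ODE in $X_\xi$ with initial data $X_\xi(\xi,0)=1$, so integration yields \eqref{Xincrease} directly. The coefficient $(mu_x)(X(\xi,s),s)$ is continuous on the compact set $[-L,L]\times[0,T]$ by the assumed regularity, so it attains its minimum $K_1$ and maximum $K_2$ there, and exponentiating these pointwise bounds gives \eqref{Xderivativebound}. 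No subtlety is expected in this step beyond noting that the strict positivity of $X_\xi$ is automatic from the exponential form.

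Next, for \eqref{keepsign} I would compute the time derivative of $m(X(\xi,t),t)X_\xi(\xi,t)$ along the characteristic. Expanding,
\begin{align*}
\frac{d}{dt}\bigl[m(X,t)X_\xi\bigr]
&=\bigl[m_t(X,t)+m_x(X,t)\dot X\bigr]X_\xi+m(X,t)\dot X_\xi\\
&=\bigl[m_t+(u^2-u_x^2)m_x+2mu_x\,m\bigr](X,t)\,X_\xi\\
&=\bigl[m_t+((u^2-u_x^2)m)_x\bigr](X,t)\,X_\xi=0,
\end{align*}
where the last equality uses the mCH equation \eqref{mCH}. Since $m(X(\xi,0),0)X_\xi(\xi,0)=m_0(\xi)$, integrating in time gives \eqref{keepsign}. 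This step is essentially the same computation that already appeared for $(u_1,m_1,Y)$ in the uniqueness argument; here it plays the role of transporting mass along the flow.

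I do not anticipate a real obstacle: everything is a consequence of the chain rule together with the two identities $m=u-u_{xx}$ and the conservative form of \eqref{mCH}. The only thing to double-check is that the hypothesis $k\ge 1$ really gives enough regularity to commute $\partial_t$ and $\partial_\xi$ on $X$ and to differentiate $m\circ X$ — and this is guaranteed by $X\in C^{k+1}_1$ and $m\in C^k_1$ with $k\ge 1$, so both computations above are justified on the closed cylinder $[-L,L]\times[0,T]$.
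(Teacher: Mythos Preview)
Your proposal is correct and follows exactly the approach the paper intends: the corollary simply repeats for $(u,m,X)$ the computation carried out for $(u_1,m_1,Y)$ in Part II of Theorem \ref{regular solution}, namely differentiating the flow equation in $\xi$ to obtain the linear ODE $\dot X_\xi=2(mu_x)(X,t)X_\xi$, integrating it, and checking that $m(X,t)X_\xi$ is conserved via the mCH equation. The only small point you might make explicit is that $K_1\le 0\le K_2$ (since $m_0$ vanishes somewhere in $[-L,L]$, hence so does $(mu_x)(X(\xi,s),s)$), which is what allows you to replace $t$ by $T$ in the exponents of \eqref{Xderivativebound}.
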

\begin{proof}
The proof for \eqref{Xderivativebound} and \eqref{keepsign} is the same as the proof for uniqueness in Theorem \ref{regular solution}.
\end{proof}
\begin{remark}\label{positivenegative}
From \eqref{keepsign}, we know that $m(X(\theta,t),t)$ does not change sign for any $t\in[0,T]$. We present a precise argument here.

 Set
$$A^+:=\{\xi\in (-L,L):m_0(\xi)>0\},~~A^-:=\{\xi\in (-L,L):m_0(\xi)<0\},$$
and
$$A^0:=\{\xi\in(-L,L):m_0(\xi)=0\}.$$
Hence,
$$A^+\cup A^-\cup A^0=(-L,L).$$
For $t\in[0,T]$, denote
$$A^+_t:=\{X(\xi,t)\in\mathbb{R}:\xi\in A^+\},~~A^-_t:=\{X(\xi,t)\in\mathbb{R}:\xi\in A^-\},$$
and
$$A^0_t:=\{X(\xi,t)\in\mathbb{R}:\xi\in A^0\}.$$
Then, we have $A^+_0=A^+$, $A^-_0=A^-$ and $A^0_0=A^0$.  Due to the monotonicity of $X(\cdot,t)$, one can easily show that $A_t^+$ and $A_t^-$ are open sets while $A^0$ is a closed set for $t\in[0,T]$.
Also we have
$$A_t^+\cup A_t^-\cup A^0_t=(X(-L,t),X(L,t))$$
and (by \eqref{keepsign})
\begin{align*}
m(x,t)\left\{
          \begin{array}{ll}
            >0,~\textrm{ for }~ x\in A_t^+ \\
            =0,~\textrm{ for }~ x\in A_t^0 \\
            <0,~\textrm{ for }~ x\in A_t^-.
          \end{array}
        \right.
\end{align*}

Due to
$$\dot{X}_\xi(\xi,t)=2(mu_x)(X(\xi,t),t)\equiv0~\textrm{ for }~\xi\in A^0,$$
we obtain
\begin{align*}
X_\xi(\xi,t)\equiv X_\xi(\xi,0)=1~\textrm{ for }~\xi\in A^0,~~t\in[0,T].
\end{align*}
This can also be obtained by \eqref{Xxixit}.
\end{remark}

\subsection{Solution extension}\label{section3}
In this subsection, we will show that as long as classical solutions to \eqref{mCH}-\eqref{initial m} satisfying $||m(\cdot,t)||_{L^\infty}<\infty$ we can extend the solutions $X$ and $m$ in time.

\begin{proposition}\label{extendsolution}
Assume $m_0\in C_c^k(-L,L)$ and $X\in C_1^{k+1}([-L,L]\times[0,T_0))$ is the solution to \eqref{Lagran dynamics}. Let $m\in C_1^k(\mathbb{R}\times[0,T_0))$ be the corresponding solution to \eqref{mCH}-\eqref{initial m}. If
$$\sup_{t\in[0,T_0)}||m(\cdot ,t)||_{L^\infty}<+\infty,$$
then there exists $\widetilde{T}_0>T_0$ such that
$$X\in C_1^{k+1}([-L,L]\times[0,\widetilde{T}_0])$$
is a solution to \eqref{Lagran dynamics},
and
$$u\in C^{k+2}_1(\mathbb{R}\times[0,\widetilde{T}_0]),\quad m\in C_1^k(\mathbb{R}\times[0,\widetilde{T}_0])$$
is a solution to \eqref{mCH}-\eqref{initial m}.
\end{proposition}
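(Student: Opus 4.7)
The strategy is a standard continuation argument: the hypothesis $\sup_{t\in[0,T_0)}||m(\cdot,t)||_{L^\infty}<\infty$, combined with an \emph{a priori} conservation of $||m(\cdot,t)||_{L^1}$, yields a uniform lower bound on the local existence time produced by Theorem \ref{noregularityresults}. This lets us re-run the local theory from a time just before $T_0$ and overshoot past $T_0$, with uniqueness handling the gluing.

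\emph{Step 1 (A priori bounds).} Set $M_\infty^\ast:=\sup_{t\in[0,T_0)}||m(\cdot,t)||_{L^\infty}<\infty$. From the identity \eqref{keepsign} together with the change of variables $x=X(\xi,t)$, we get
\begin{align*}
||m(\cdot,t)||_{L^1}=\int_{-L}^{L}|m(X(\xi,t),t)|\,X_\xi(\xi,t)\,d\xi=||m_0||_{L^1}=:M_1
\end{align*}
for every $t\in[0,T_0)$. Define
$$\tau_\ast:=\frac{1}{2M_1^{2}+M_1 M_\infty^\ast}.$$
By the explicit lifespan condition \eqref{time}, this is a lower bound on the existence time for Theorem \ref{noregularityresults} starting from \emph{any} initial datum in $C^k_c(-L,L)$ whose $L^1$- and $L^\infty$-norms do not exceed $M_1$ and $M_\infty^\ast$ respectively.

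\emph{Step 2 (Restart near $T_0$).} Pick $t^\ast\in(T_0-\tau_\ast/2,T_0)$. Then $m(\cdot,t^\ast)\in C^k(\mathbb{R})$ and, by \eqref{support of m}, has support in $(-L,L)$; in particular $m(\cdot,t^\ast)\in C^{k}_{c}(-L,L)$ with $||m(\cdot,t^\ast)||_{L^1}=M_1$ and $||m(\cdot,t^\ast)||_{L^\infty}\leq M_\infty^\ast$. Since \eqref{Lagran dynamics} is autonomous in $t$, applying Theorems \ref{noregularityresults} and \ref{regular solution} to the shifted Cauchy problem with initial datum $m(\cdot,t^\ast)$ posed at time $t^\ast$ yields, for any $t_1<\tau_\ast$, a solution
\begin{align*}
\widetilde{X}\in C^{k+1}_{1}([-L,L]\times[t^\ast,t^\ast+t_1]),\quad \widetilde{u}\in C^{k+2}_{1}(\mathbb{R}\times[t^\ast,t^\ast+t_1]),\quad \widetilde{m}\in C^{k}_{1}(\mathbb{R}\times[t^\ast,t^\ast+t_1]),
\end{align*}
with $\widetilde{X}_\xi>0$ throughout.

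\emph{Step 3 (Gluing by uniqueness).} Choose $t_1<\tau_\ast$ with $t^\ast+t_1>T_0$, which is possible because $T_0-t^\ast<\tau_\ast/2<\tau_\ast$. On the overlap $[t^\ast,T_0)$, the uniqueness clauses of Theorems \ref{noregularityresults} and \ref{regular solution} force $\widetilde{X}=X$, $\widetilde{u}=u$, $\widetilde{m}=m$. Setting $\widetilde{T}_0:=t^\ast+t_1>T_0$ and concatenating the old and new pieces produces the claimed extension $X\in C^{k+1}_{1}([-L,L]\times[0,\widetilde{T}_0])$, $u\in C^{k+2}_{1}(\mathbb{R}\times[0,\widetilde{T}_0])$, $m\in C^{k}_{1}(\mathbb{R}\times[0,\widetilde{T}_0])$.

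\emph{Main obstacle.} The only non-trivial ingredient is the conservation $||m(\cdot,t)||_{L^1}\equiv||m_0||_{L^1}$, which makes the lifespan $\tau_\ast$ from Step 1 independent of the starting time; without it, the $L^\infty$-bound alone would not produce a uniform existence time since formula \eqref{time} depends on both norms. Once this is available, the proof is a routine restart-and-glue argument, and no further \emph{a priori} estimate on the higher derivatives of $X$ is needed, because all regularity is delivered directly by the local existence theorems at $t=t^\ast$.
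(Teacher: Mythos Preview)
Your overall strategy---uniform $L^1$ and $L^\infty$ bounds give a uniform local lifespan, restart just before $T_0$, then glue---is exactly the paper's approach. Your use of the exact conservation $||m(\cdot,t)||_{L^1}=M_1$ (via \eqref{keepsign}) is slightly sharper than the paper, which only invokes the uniform support \eqref{support of m} to bound $||m(\cdot,t)||_{L^1}$, but this makes no material difference.

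There is, however, a genuine slip in Step~3. The claim ``uniqueness forces $\widetilde{X}=X$ on $[t^\ast,T_0)$'' is false as stated: your restarted flow satisfies $\widetilde{X}(\xi,t^\ast)=\xi$, while the original flow has $X(\xi,t^\ast)\neq\xi$ in general, so the two maps cannot coincide. What uniqueness of the classical solution (Theorem~\ref{regular solution}) actually gives is $\widetilde u=u$ and $\widetilde m=m$ on the overlap; the correct relation between the flows is the \emph{composition}
\[
X(\xi,t)=\widetilde{X}\big(X(\xi,t^\ast),\,t\big),\qquad t\in[t^\ast,\widetilde T_0],
\]
and this is how the extension of $X$ must be defined. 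The paper does precisely this (equation~\eqref{Xextend}) and then checks, using the push-forward identity \eqref{keepsign} to change variables $\widetilde\theta=X(\theta,T_1)$, that the composite flow still satisfies the original Lagrange dynamics \eqref{Lagran dynamics} driven by $m_0$ rather than by $m(\cdot,t^\ast)$. Once you replace ``$\widetilde X=X$'' by this composition and add the one-line verification, your argument is complete and matches the paper's.
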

\begin{proof}
There exists a constant $\widetilde{M}_\infty$ satisfies
$$\sup_{t\in[0,T_0)}||m(\cdot ,t)||_{L^\infty}\leq \widetilde{M}_\infty.$$
From Lemma \ref{lagransolution}, we know $m(\cdot ,t)$ has a uniform (in $t$) support. Hence, there exists a constant $\widetilde{M_1}$ such that
$$\sup_{t\in[0,T_0)}||m(\cdot ,t)||_{L^1}\leq \widetilde{M}_1.$$

Consider time $T_1=T_0-\frac{1}{3(2\widetilde{M}_1^2+\widetilde{M}_1\widetilde{M}_\infty)}$. Our target is to prove that the classical solution can be extend to $\widetilde{T}_0:=T_1+\frac{1}{2(2\widetilde{M}_1^2+\widetilde{M}_1\widetilde{M}_\infty)}>T_0$. We will show this in two steps.

\emph{Step 1}. In this step we consider a dynamic system from time $T_1.$

From \eqref{support of m} we know $m(\cdot ,T_1)\in C_c^k(-L,L)$.
Set
$$\widetilde{m}_0(\widetilde{\theta}):=m(\widetilde{\theta},T_1)~\textrm{ for }~\widetilde{\theta}\in[-L,L].$$
Consider dynamics for $\widetilde{X}(\widetilde{\xi},t)$:
\begin{align}\label{T1initialdynamics}
\left\{
  \begin{array}{ll}
    \displaystyle{\frac{d}{dt}\widetilde{X}(\widetilde{\xi},t)=\bigg(\int_{-L}^{L}G(\widetilde{X}(\widetilde{\xi},t))-\widetilde{X}(\widetilde{\theta},t))
\widetilde{m}_0(\widetilde{\theta})d\widetilde{\theta}\bigg)^2}  \\ \qquad\qquad\qquad\qquad\qquad\qquad
\displaystyle{-\bigg(\int_{-L}^{L}G'(\widetilde{X}(\widetilde{\xi},t))-\widetilde{X}(\widetilde{\theta},t))\widetilde{m}_0(\widetilde{\theta})d\widetilde{\theta}\bigg)^2,} \\
    \widetilde{X}(\widetilde{\xi},0)=\widetilde{\xi}\in [-L,L].
  \end{array}
\right.
\end{align}
Because $ \widetilde{m}_0(\cdot )=m(\cdot ,T_1)\in C^k_c(-L,L)$, by Theorem \ref{regular solution}, we know that for any $$0<t_1< \frac{1}{2\widetilde{M}_1^2+\widetilde{M}_1\widetilde{M}_\infty},$$
there exists a solution
$\widetilde{X}(\widetilde{\xi},t)$ to \eqref{T1initialdynamics} and a classical solution ($\widetilde{u}(x,t),\widetilde{m}(x,t)$) to \eqref{mCH} subject to initial condition
$$\widetilde{m}(x,0)=\widetilde{m}_0(x)=m(x,T_1).$$
Moreover,
$$\widetilde{X}\in C^{k+1}_1([-L,L]\times[0,t_1]),$$
$$\widetilde{u}\in C^{k+2}_1(\mathbb{R}\times[0,t_1])~\textrm{ and }~\widetilde{m}\in C^k_1(\mathbb{R}\times[0,t_1]).$$
Choose $t_1=\frac{1}{2(2\widetilde{M}_1^2+\widetilde{M}_1\widetilde{M}_\infty)}$ and  set $\widetilde{T}_0=T_1+t_1.$ Thus $T_0<\widetilde{T}_0$.

\emph{Step 2.} In this step we extend the solutions to $[0,\widetilde{T}_0]$.

Changing variable by $\widetilde{\xi}=X(\xi,T_1)$, initial value $\widetilde{X}\big(X(\xi,T_1),0\big)=X(\xi,T_1)$ allows us to define
\begin{align}\label{Xextend}
X(\xi,T_1+t):=\widetilde{X}\big(X(\xi,T_1),t\big)~\textrm{ for }~\xi\in[-L,L],t\in[0,t_1]
\end{align}
and
we have
$$X\in C_1^{k+1}([-L,L]\times[0,\widetilde{T}_0]).$$
Similarly, because $\widetilde{m}(x,0)=m(x,T_1)$, we can use $\widetilde{u}(x,t),\widetilde{m}(x,t)$ to define
\begin{align*}
u(x,T_1+t):=\widetilde{u}(x,t),\quad m(x,T_1+t):=\widetilde{m}(x,t)\quad\textrm{for }(x,t)\in\mathbb{R}\times[0,t_1]
\end{align*}
and we have
$$u\in C_1^{k+2}(\mathbb{R}\times[0,\widetilde{T}_0]),\quad m\in C_1^k(\mathbb{R}\times[0,\widetilde{T}_0]).$$
Moreover, we can see $(u(x,t),m(x,t))$ we defined is a classical solution to \eqref{mCH}-\eqref{initial m} in $[0,\widetilde{T}_0]$.

Next, we show $X(\xi,t)$ satisfies \eqref{Lagran dynamics} in $[0,\widetilde{T}_0]$.

Actually, changing variable by $\widetilde{\theta}=X(\theta,T_1)$ and combining \eqref{Xextend} and \eqref{keepsign} lead to
\begin{align*}
u(x,T_1+t)&=\widetilde{u}(x,t)=\int_{-L}^{L}G(x-\widetilde{X}(\widetilde{\theta},t))\widetilde{m}_0(\widetilde{\theta})d\widetilde{\theta}\\
&=\int_{-L}^LG(x-X(\theta,T_1+t))m(X(\theta,T_1),T_1+t)X_\theta(\theta,T_1+t)d\theta\\
&=\int_{-L}^LG(x-X(\theta,T_1+t))m_0(\theta)d\theta.
\end{align*}
Similarly,
$$\int_{-L}^{L}G'(x-\widetilde{X}(\widetilde{\theta},t))\widetilde{m}_0(\widetilde{\theta})d\widetilde{\theta}=u_x(x,T_1+t).$$
Therefore, \eqref{T1initialdynamics} turns into
\begin{align*}
\left\{
  \begin{array}{ll}
    \dot{X}(\xi,T_1+t)=u^2(X(\xi,T_1+t),T_1+t)-u_x^2(X(\xi,T_1+t),T_1+t),\\
    X(\xi,T_1+0)=\widetilde{X}(X(\xi,T_1),0)=X(\xi,T_1),
  \end{array}
\right.
\end{align*}
for $\xi\in[-L,L]$ and $t\in[0,t_1]$.

Hence, $X\in C_1^{k+1}([-L,L]\times[0,\widetilde{T}_0])$ is a solution to \eqref{Lagran dynamics}. Corollary \ref{increasing} ensures the strictly monotonicity of $X(\cdot ,t)$ for $t\in [0,\widetilde{T}_0]$. Therefore, $X(\xi,t)$ is the unique solution which extends the solution to $\widetilde{T}_0.$

\end{proof}

\section{Blow-up criteria}\label{sec2}
In this section, we give some criteria on finite time blow-up of classical solutions to the mCH equation.

Let $T_{max}>0$ be the maximal existence time of classical solution to the mCH equation.  In other words, $T_{max}$ satisfies
\begin{gather*}
\left\{
\begin{split}
&||m(\cdot ,t)||_{L^\infty}<+\infty,\quad0\leq t<T_{max}, \\
&\limsup_{t\rightarrow T_{max}}||m(\cdot ,t)||_{L^\infty}=+\infty.
\end{split}
\right.
\end{gather*}

Next lemma shows that the solution to Lagrange dynamics \eqref{Lagran dynamics} can be extended to the blow-up time $T_{max}$.
\begin{lemma}\label{XTmaxlemma}
Let $m_0\in C_c^k(-L,L)$. Let $T_{max}$ be the maximal existence time for the classical solution $m(x,t)$ to \eqref{mCH}-\eqref{initial m} and $X\in C^{k+1}_1([-L,L]\times[0,T_{max}))$ be the solution to \eqref{Lagran dynamics}. Then we have
\begin{align}\label{Xtmax}
X\in C^{k+1}_1([-L,L]\times[0,T_{max}]).
\end{align}
\end{lemma}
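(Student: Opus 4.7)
The plan is to show that every derivative $\partial_\xi^\ell X$ for $0 \le \ell \le k+1$, together with $\partial_t X$, remains uniformly bounded and continuous on $[-L,L]\times[0,T_{max})$, and therefore extends continuously to $t=T_{max}$. The crucial observation is that although $m(\cdot,t)=m_0(\xi)/X_\xi(\xi,t)$ blows up, the map $X$ itself is built from integrals of $m_0$ against the smooth kernel $G$ composed with $X$, and these integrals never require dividing by $X_\xi$. Thus the derivatives of $X$ are insulated from the blow-up of $m$.

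First I would extend $X$ itself. From the uniform bounds \eqref{uuxUbound}, $|\dot X(\xi,t)|=|U(X(\xi,t),t)|\le\tfrac12\|m_0\|_{L^1}^2$ on $[-L,L]\times[0,T_{max})$, so $X(\xi,\cdot)$ is uniformly Lipschitz in $t$ and extends to a continuous function $X(\xi,T_{max})$. Using the explicit formula \eqref{short representation}, $U(X(\xi,t),t)$ is expressed as a product of integrals of $G(X(\xi,t)-X(\theta,t))m_0(\theta)$, whose integrands are bounded by $\tfrac12\|m_0\|_\infty$ and continuous in $(\xi,t)$; since $X$ itself extends continuously to $T_{max}$, dominated convergence gives that $U(X(\xi,t),t)$ extends continuously, so $\partial_t X\in C([-L,L]\times[0,T_{max}])$.

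Next I would extend the $\xi$-derivatives by induction on $\ell$. The base case $\ell=1$ uses formula \eqref{Xxixit}: the integrand is uniformly bounded in absolute value by $M_1 M_\infty$ and is continuous on $[-L,L]\times[0,T_{max})$ by the continuity of $X$; thus $X_\xi$ is uniformly bounded on $[-L,L]\times[0,T_{max}]$ and, by dominated convergence, extends continuously to $t=T_{max}$. For the inductive step, differentiating \eqref{Xxixit} $\ell-1$ times with respect to $\xi$ gives a representation
\[
\partial_\xi^\ell X(\xi,t) = P_\ell(\xi) + \int_0^t F_\ell\bigl(\xi, X(\cdot,s), X_\xi(\cdot,s),\ldots,\partial_\xi^{\ell-1} X(\cdot,s);\, m_0,\ldots,m_0^{(\ell-1)}\bigr)\,ds,
\]
where $P_\ell$ is a polynomial in $m_0,\ldots,m_0^{(\ell-1)}$ and $F_\ell$ is a polynomial in the displayed derivatives of $X$ integrated against $G$ or $G'$ and weighted by $m_0,\ldots,m_0^{(\ell-1)}$. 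Crucially, no term of the form $1/X_\xi$ appears, exactly because the identities \eqref{1}--\eqref{2} were already used to derive \eqref{Xxixit} (compare with \eqref{Xxixixit}). By the inductive hypothesis, all factors in $F_\ell$ are uniformly bounded and continuous on $[-L,L]\times[0,T_{max})$, so the same conclusion holds for $\partial_\xi^\ell X$.

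The main obstacle is verifying that the induction really avoids any inverse power of $X_\xi$ at every level $\ell\le k+1$; once one admits a $1/X_\xi$ factor, the argument fails because $X_\xi$ tends to $0$ somewhere as $t\to T_{max}$ (by criterion (b) of Theorem \ref{maintheorem1}). To handle this cleanly I would proceed by always differentiating the integral form \eqref{Xxixit} (rather than expressions involving $m(X(\xi,s),s)$), using the splitting into $\int_{-L}^\xi$ and $\int_\xi^L$ so that the derivative falling on the upper or lower limit produces the $G(0)m_0(\xi)$ boundary terms that cancel via the sign identities $G'(x)=\pm G(x)$ for $x\lessgtr 0$, and observing that any derivative falling on $G(X(\xi,s)-X(\theta,s))$ brings out a factor of $\partial_\xi^j X(\xi,s)$ for $j\le \ell-1$, which is bounded by induction. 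Together with the already-established extension of $\partial_t X$, this yields $X\in C_1^{k+1}([-L,L]\times[0,T_{max}])$.
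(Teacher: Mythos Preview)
Your approach is correct and is essentially the same as the paper's: both extend $X$ and its $\xi$-derivatives to $t=T_{max}$ by reading off uniform bounds from the explicit integral formulas \eqref{uniqueXT}, \eqref{Xxixit}, \eqref{Xxixixit} (and their higher analogues), together with \eqref{short representation} for $\partial_t X$. The paper is terser, simply invoking \eqref{derivativeproperties} and saying ``keep doing like this,'' whereas you spell out the inductive mechanism and correctly emphasize that no factor of $1/X_\xi$ ever appears because the sign identities $G'(x)=\mp G(x)$ on $x\gtrless 0$ convert every $G'$ back into $G$. One small inaccuracy: your ``$P_\ell(\xi)$'' is not purely a function of $\xi$---already at level $\ell=2$ the formula \eqref{Xxixixit} contains the term $-2m_0^2(\xi)t$---but this is harmless since such terms are still bounded and continuous on $[-L,L]\times[0,T_{max}]$.
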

\begin{proof}
Let $t$ go to $T_{max}$ in \eqref{uniqueXT} and we obtain $X(\xi,T_{max})$.  Using \eqref{upbound} and Lipschitz property of $G(x)=\frac{1}{2}e^{-|x|}$, we can obtain that
$$X\in C([-L,L]\times[0,T_{max}]).$$
Let $t$ go to $T_{max}$ in \eqref{Xxixit} and \eqref{Xxixixit}. Similarly, combining \eqref{derivativeproperties} gives
\begin{align*}
X\in C^2_0([-L,L]\times[0,T_{max}]).
\end{align*}
Keep doing like this and we can see
\begin{align*}
X\in C^{k+1}_0([-L,L]\times[0,T_{max}]).
\end{align*}

At last, let $t$ go to $T_{max}$ in \eqref{short representation} and combining \eqref{Lagran dynamics}, we have  $\partial_tX\in C([-L,L]\times[0,T_{max}])$.

\end{proof}

We have the following blow up criteria.
\begin{theorem}\label{criterion}
Let $m_0\in C_c^k(-L,L)$ $(k\in \mathbb{N}, k\geq1)$. $X(\xi,t)$ is the solution to Lagrange dynamics \eqref{Lagran dynamics}. Assume $T_{max}<+\infty$ is the maximum existence time for the classical solution to \eqref{mCH}-\eqref{initial m}. Then, the following equivalent statements hold.\\
$\mathrm{(i)}$
\begin{align}
\limsup_{t\rightarrow T_{max}}||m(\cdot ,t)||_{L^\infty}=+\infty,\label{mblowup}
\end{align}
$\mathrm{(ii)}$
\begin{gather}\label{blowupfor lagrange}
\left\{
  \begin{split}
   &X_\xi(\xi,t)>0~\textrm{ for }~(\xi,t)\in[-L,L]\times[0,T_{max});\\
  &\min_{\xi\in[-L,L]}X_\xi(\xi,T_{max})=0.
  \end{split}
\right.
\end{gather}
$\mathrm{(iii)}$
\begin{align}
\liminf_{t\rightarrow T_{max}}\Big\{\inf_{\xi\in[-L,L]}\int_0^t(mu_x)(X(\xi,s),s)ds\Big\}=-\infty,\label{equal2}
\end{align}
$\mathrm{(iv)}$
\begin{equation}
\liminf_{t\rightarrow T_{max}}\Big\{\inf_{x\in\mathbb{R}}(mu_x)(x,t)\Big\}=-\infty,\label{equal4}
\end{equation}
$\mathrm{(v)}$
\begin{equation}
\limsup_{t\rightarrow T_{max}}||m(\cdot ,t)||_{W^{1,p}}=+\infty~\textrm{ for }p\geq1, \label{equal5}
\end{equation}
$\mathrm{(vi)}$
\begin{align}\label{equal61}
\int_0^{T_{max}}||m(\cdot ,t)||_{L^\infty}dt=+\infty.
\end{align}

\end{theorem}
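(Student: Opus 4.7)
My plan is to establish the six equivalences via a cycle $\mathrm{(i)}\Rightarrow\mathrm{(ii)}\Rightarrow\mathrm{(iii)}\Rightarrow\mathrm{(iv)}\Rightarrow\mathrm{(i)}$, and then to show that $\mathrm{(v)}$ and $\mathrm{(vi)}$ are each equivalent to $\mathrm{(i)}$. The main tools are already developed in Section \ref{sec1}: Lemma \ref{XTmaxlemma} propagates the regularity of $X$ to the closed interval $[0,T_{max}]$; Corollary \ref{increasing} gives the exponential formula $X_\xi(\xi,t)=\exp\bigl(2\int_0^t(mu_x)(X(\xi,s),s)\,ds\bigr)$ and the keepsign identity \eqref{keepsign}; the uniform bound $|u_x|\leq\|m_0\|_{L^1}/2$ is \eqref{uuxUbound}; Proposition \ref{extendsolution} provides the extension criterion; and $m(\cdot,t)$ is supported in the fixed interval $(-L,L)$ by \eqref{support of m}.

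For $\mathrm{(i)}\Rightarrow\mathrm{(ii)}$, Lemma \ref{XTmaxlemma} makes $X_\xi$ continuous on the compact set $[-L,L]\times[0,T_{max}]$, so if $\min_\xi X_\xi(\cdot,T_{max})>0$ then $X_\xi\geq\varepsilon>0$ uniformly, and \eqref{keepsign} gives $\|m(\cdot,t)\|_{L^\infty}\leq\|m_0\|_{L^\infty}/\varepsilon$, contradicting $\mathrm{(i)}$. The step $\mathrm{(ii)}\Rightarrow\mathrm{(iii)}$ is immediate from Corollary \ref{increasing}, since $X_\xi(\xi_0,T_{max})=0$ forces the exponent to $-\infty$ at $\xi_0$. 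For $\mathrm{(iii)}\Rightarrow\mathrm{(iv)}$ I use the finiteness of $T_{max}$: a uniform bound $(mu_x)(X(\xi,s),s)\geq -M$ would give $\int_0^t(mu_x)\circ X\,ds\geq -MT_{max}$, contradicting $\mathrm{(iii)}$, so there exist $\xi_n\in[-L,L]$ and $s_n\to T_{max}$ with $(mu_x)(X(\xi_n,s_n),s_n)\to-\infty$, yielding $\liminf_{t\to T_{max}}\inf_x(mu_x)(x,t)=-\infty$. Finally $\mathrm{(iv)}\Rightarrow\mathrm{(i)}$ follows from $|u_x|\leq\|m_0\|_{L^1}/2$ via the pointwise bound $\|m(\cdot,t)\|_{L^\infty}\geq 2|(mu_x)(x,t)|/\|m_0\|_{L^1}$.

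For $\mathrm{(i)}\Leftrightarrow\mathrm{(v)}$, the elementary embedding $\|m\|_{L^\infty}\leq\|m_x\|_{L^1}\leq(2L)^{1-1/p}\|m\|_{W^{1,p}}$, valid for $1\leq p\leq\infty$ because of the fixed compact support, handles $\mathrm{(i)}\Rightarrow\mathrm{(v)}$; for the converse, if $\sup_{t<T_{max}}\|m\|_{L^\infty}<\infty$ then Proposition \ref{extendsolution} extends the classical solution past $T_{max}$, contradicting maximality. For $\mathrm{(i)}\Leftrightarrow\mathrm{(vi)}$, the direction $\mathrm{(vi)}\Rightarrow\mathrm{(i)}$ is immediate since a uniformly bounded function has finite integral on $[0,T_{max}]$; for $\mathrm{(i)}\Rightarrow\mathrm{(vi)}$ I take logarithms in the Corollary \ref{increasing} formula and use $|u_x|\leq\|m_0\|_{L^1}/2$ to obtain $|\log X_\xi(\xi,t)|\leq\|m_0\|_{L^1}\int_0^t\|m(\cdot,s)\|_{L^\infty}\,ds$; if the right-hand side stayed finite up to $T_{max}$, $X_\xi$ would remain bounded away from $0$, contradicting $\mathrm{(ii)}$. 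The main obstacle I anticipate is the passage from the inner infimum over Lagrangian labels of a time-integrated quantity in $\mathrm{(iii)}$ to the pointwise spatial infimum at varying times in $\mathrm{(iv)}$; this works only because $T_{max}<\infty$ keeps the time interval short enough that boundedness of the integrand from below would force boundedness of the integral from below along every characteristic.
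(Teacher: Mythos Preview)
Your proposal is correct and uses the same core ingredients as the paper: Lemma \ref{XTmaxlemma}, Corollary \ref{increasing} (the exponential formula \eqref{Xincrease} and the identity \eqref{keepsign}), the bound \eqref{uuxUbound} on $u_x$, and Proposition \ref{extendsolution}. The argument is essentially the paper's proof with a slightly reorganized implication graph.

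The only notable difference is that the paper closes its main cycle via $\mathrm{(iv)}\Rightarrow\mathrm{(v)}\Rightarrow\mathrm{(i)}$ (arguing by contradiction that bounded $W^{1,p}$ norm forces bounded $L^\infty$ norm via the Sobolev embedding, hence bounded $mu_x$), whereas you pass directly $\mathrm{(iv)}\Rightarrow\mathrm{(i)}$ using $|mu_x|\le\tfrac12\|m_0\|_{L^1}\|m\|_{L^\infty}$; your route is slightly shorter. For $\mathrm{(vi)}$ the paper proves $\mathrm{(iii)}\Rightarrow\mathrm{(vi)}$ from $\bigl|\int_0^t(mu_x)\,ds\bigr|\le C\int_0^t\|m\|_{L^\infty}\,ds$, while you take logarithms in \eqref{Xincrease} and invoke $\mathrm{(ii)}$; these are the same estimate read in two directions. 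One small point you leave implicit in $\mathrm{(iii)}\Rightarrow\mathrm{(iv)}$ is why the bad times $s_n$ must accumulate at $T_{max}$: this is because $m\in C^k_1(\mathbb{R}\times[0,T_0])$ for every $T_0<T_{max}$, so $mu_x$ is bounded on each such slab. The paper makes the same step by first rewriting $\mathrm{(iii)}$ as $\liminf_{t\to T_{max}}\inf_\xi(mu_x)(X(\xi,t),t)=-\infty$ and then identifying this with the spatial infimum via \eqref{support of m}.
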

\begin{proof}
We follow the following lines to prove this theorem,
$$\eqref{mblowup}\Rightarrow\eqref{blowupfor lagrange}\Rightarrow\eqref{equal2}\Rightarrow
\eqref{equal4}\Rightarrow\eqref{equal5}\Rightarrow\eqref{mblowup}$$
and
$$\eqref{equal2}\Rightarrow\eqref{equal61}\Rightarrow\eqref{mblowup}.$$

\emph{Step 1}. We prove $\eqref{mblowup}\Rightarrow\eqref{blowupfor lagrange}$.

 Assume $m(x,t)$ blows up in finite time $T_{max}$. We prove \eqref{blowupfor lagrange} by contradiction.
From Lemma \ref{XTmaxlemma}, we know $X\in C_1^2([-L,L]\times[0,T_{max}])$.
If $\eqref{blowupfor lagrange}$ does not hold, then we have
\begin{align*}
\min\big\{X_\xi(\xi,t):(\xi,t)\in[-L,L]\times[0,T_{max}]\big\}>C_1>0.
\end{align*}
Combining \eqref{keepsign} and \eqref{support of m}, we have
\begin{align*}
\sup_{t\in[0,T_{max})}||m(\cdot ,t)||_{L^\infty(\mathbb{R})}&=\sup_{t\in[0,T_{max})}||m(X(\cdot ,t),t)||_{L^\infty(-L,L)}\\
&=\sup_{t\in[0,T_{max})}\bigg|\bigg|\frac{m_0(\cdot )}{X_\theta(\cdot ,t)}\bigg|\bigg|_{L^\infty(-L,L)}\leq \frac{||m_0||_{L^\infty}}{C_1}.
\end{align*}
This is a contradiction to \eqref{mblowup}.

\emph{Step 2.} We prove $\eqref{blowupfor lagrange}\Rightarrow\eqref{equal2}.$

From \eqref{blowupfor lagrange}, we have
$$\liminf_{t\rightarrow T_{max}}\Big\{\inf_{\xi\in[-L,L]}X_\xi(\xi,t)\Big\}=0.$$
Together with \eqref{Xincrease}, we can see $\eqref{blowupfor lagrange}\Rightarrow\eqref{equal2}$.

\emph{Step 3.} We prove $\eqref{equal2}\Rightarrow\eqref{equal4}$.

 \eqref{equal2} implies that
\begin{align}
\liminf_{t\rightarrow T_{max}}\Big\{\inf_{\xi\in[-L,L]}(mu_x)(X(\xi,t),t)\Big\}=-\infty.\label{equal3}
\end{align}
Because of \eqref{support of m}, for any $t\in[0,T_{max})$ we have
$$\inf_{\xi\in[-L,L]}(mu_x)(X(\xi,t),t)=\inf_{x\in[-L,L]}mu_x(x,t)=\inf_{x\in\mathbb{R}}mu_x(x,t).$$
Hence, we can see that \eqref{equal3} and \eqref{equal4} are equivalent.

\emph{Step 4.} We prove $\eqref{equal4}\Rightarrow\eqref{equal5}$.

Assume \eqref{equal4} holds.
We prove \eqref{equal5} by contradiction. For any $1\leq p\leq+\infty$, if
$$\limsup_{t\rightarrow T_{max}}||m(\cdot ,t)||_{W^{1,p}}<+\infty,$$
then
$$\sup_{t\in[0,T_{max})}||m(\cdot ,t)||_{W^{1,p}}<+\infty.$$
 $W^{1,p}(\mathbb{R})\subset L^\infty(\mathbb{R})$ with continuous injection for all $1\leq p\leq +\infty$ implies that
$$\sup_{t\in[0,T_{max})}||m(\cdot, t)||_{L^\infty}<+\infty.$$
On the other hand, we have
\begin{align}\label{uxbound}
\sup_{t\in[0,T_{max})}||u_x(\cdot,t)||_{L^\infty}\leq\sup_{t\in[0,T_{max})}\bigg|\bigg|\int_{-L}^LG'(\cdot -X(\theta,t))m_0(\theta)d\theta\bigg|\bigg|_{L^\infty}\leq\frac{1}{2}||m_0||_{L^1}.
\end{align}
Hence we obtain $\sup_{t\in[0,T_{max})}||mu_x(\cdot ,t)||_{L^\infty}<+\infty$, which is a contradiction with \eqref{equal4}. Therefore, \eqref{equal5} holds.

\emph{Step 5.} We prove $\eqref{equal5}\Rightarrow\eqref{mblowup}$.

Assume \eqref{equal5} holds.
If $\sup_{t\in[0,T_{max})}||m(\cdot ,t)||_{L^\infty}<+\infty$, by Proposition \ref{extendsolution}, there exists  $T>T_{max}$ such that
$m\in C_1^1(\mathbb{R}\times[0,T]).$ Because $m(\cdot ,t)$ has uniform compact support for $t\in[0,T]$, we have
$$\sup_{t\in[0,T_{max})}||m(\cdot, t)||_{W^{1,p}}\leq\sup_{t\in[0,T]}||m(\cdot,t)||_{W^{1,p}}<+\infty,$$
which is a contradiction.

\emph{Step 6.} At last, we prove $$\eqref{equal2}\Rightarrow\eqref{equal61}\Rightarrow\eqref{mblowup}.$$

When \eqref{equal61} holds, one can easily obtain \eqref{mblowup}. So, we only have to prove $\eqref{equal2}\Rightarrow\eqref{equal61}$.
\eqref{equal2} implies
$$\limsup_{t\rightarrow T_{max}}\Big\{\sup_{x\in\mathbb{R}}\int_0^t|mu_x(x,s)|ds\Big\}=+\infty.$$
Due to \eqref{uxbound}, we obtain
$$\sup_{x\in\mathbb{R}}\int_0^t|mu_x(x,s)|ds\leq C\int_0^t||m(\cdot ,s)||_{L^\infty}ds\leq C\int_{0}^{T_{max}}||m(\cdot ,t)||_{L^\infty}dt$$
and this gives \eqref{equal61}.

\end{proof}

\begin{remark}
\eqref{blowupfor lagrange} shows that  there is a $\xi_0$ such that $X_\xi(\xi_0,T_{max})=0$. This means $T_{max}$ is an onset time of collision of characteristics. Now, we can conclude that if $m(x,t)$ blows up in finite time $T_{max}$, then we have
$$X\in C_1^{k+1}([-L,L]\times[0,T_{max}])~\textrm{ and }~m\in C_1^k(\mathbb{R}\times[0,T_{max})).$$

The blow-up criterion \eqref{equal4} can also be found in \cite{GuiLiuOlverQu}.
Besides, \eqref{equal61} is similar to the well known blow-up criterion for smooth solutions to $3D$ Euler equation \cite{BealeKatoMajda}.
\end{remark}

\begin{remark}[Other equivalent criteria]\label{equivalentcriterionremark}
Because $m(x,t)$ has compact support for $t\in[0,T_{max})$, by Poincar\'e inequality, \eqref{equal5} is equivalent to (for any $1\leq p\leq+\infty$)
\begin{align}\label{equal52}
\limsup_{t\rightarrow T_{max}}||m_x(\cdot ,t)||_{L^p}=+\infty.
\end{align}
Because $m=u-u_{xx}$ and $|u(x,t)|=\Big|\int_{-L}^LG(x-X(\theta,t))m_0(\theta)d\theta\Big|\leq \frac{1}{2}||m_0||_{L^1}$, we know that \eqref{mblowup} is equivalent to
\begin{align*}
\limsup_{t\rightarrow T_{max}}||u_{xx}(\cdot, t)||_{L^\infty}=+\infty.
\end{align*}
\eqref{uxbound} tells us $u_x$ is bounded. Hence the blow up behavior is different with the Camassa-Holm equation, where $u_x$ becomes unbounded \cite{Constantin11,Constantin22}.

When $m_0(x)\geq0$, equality \eqref{keepsign} implies $m(x,t)\geq0$ for any $t\in[0,T_{max})$. Then, all the above blow-up criterions are equivalent to
$$\limsup_{t\rightarrow T_{max}}\Big\{\sup_{x\in \mathbb{R}}m(x,t)\Big\}=+\infty.$$
\end{remark}

Next, when $m_0\in C_c^k(-L,L)$ ($k\geq2$), we give another proof for \eqref{equal61} based on  \eqref{equal52}(p=2).
\begin{proof}[Another proof for \eqref{equal61}]
By Theorem \ref{noregularityresults} and Theorem \ref{regular solution}, we know $m\in C_1^k(\mathbb{R}\times[0,T_{max}))$.
From \eqref{mCH}, we obtain
$$\partial_t(m_x)=-(u^2-u_x^2)(m_x)_x-2u_{xx}m^2-6u_xmm_x.$$
Multiplying both sides by $m_x$  and taking integral show that
$$\frac{1}{2}\frac{d}{dt}\int_{\mathbb{R}}m^2_xdx
=-6\int_{\mathbb{R}}mu_xm_x^2dx-2\int_{\mathbb{R}}m^2m_xu_{xx}dx-\frac{1}{2}\int_{\mathbb{R}}(u^2-u_x^2)(m^2_x)_xdx.$$
Integration by parts for the last term implies that
$$\frac{d}{dt}\int_{\mathbb{R}}m^2_xdx=-10\int_{\mathbb{R}}mu_xm_x^2dx-2\int_{\mathbb{R}}m^2m_xu_{xx}dx.$$
On the other hand, we have
\begin{align*}
\int_{\mathbb{R}}m^2m_xu_{xx}dx&=\int_{\mathbb{R}}m^2m_x(u-m)dx\\
&=\int_{\mathbb{R}}\frac{1}{3}(m^3)_xu-\frac{1}{4}(m^4)_xdx=-\frac{1}{3}\int_{\mathbb{R}}m^3u_xdx.
\end{align*}
Hence
$$\frac{d}{dt}\int_{\mathbb{R}}m^2_xdx=-10\int_{\mathbb{R}}mu_xm_x^2dx+\frac{2}{3}\int_{\mathbb{R}}m^3u_xdx.$$
Inequality \eqref{uxbound} gives
$$\int_{\mathbb{R}}mu_xm_x^2dx\leq C||m||_{L^\infty}\int_{\mathbb{R}}m_x^2dx.$$
By Poincar\'e inequality, we have
\begin{align*}
\int_{\mathbb{R}}m^3u_{x}dx\leq C||m||_{L^\infty}\int_{\mathbb{R}}|m^2|dx\leq C||m||_{L^\infty}\int_{\mathbb{R}}m_x^2dx.
\end{align*}
Hence
$$\frac{d}{dt}\int_{\mathbb{R}}m^2_xdx\leq C||m||_{L^\infty}\int_{\mathbb{R}}m_x^2dx.$$
Gronwall's inequality shows that
\begin{align*}
||m_x||^2_{L^2}\leq ||\partial_xm_0||^2_{L^2}\exp\Big\{C\int_0^t||m||_{L^\infty}ds\Big\}.
\end{align*}
which implies $\eqref{equal52}(p=2)\Rightarrow\eqref{equal61}$.

\end{proof}

\section{Finite time blow up and almost global existence of classical solutions}\label{sec3}

In the rest of this paper, we assume $m_0\in C^1_c(-L,L)$.

In this section, we show that for some initial data  solutions to the mCH equation blow up in finite time. Some blow-up rates  are obtained. Moreover, for any $\epsilon>0$ and initial data $\epsilon m_0(x)\in C^1_c(\mathbb{R})$, we prove that the lifespan of the classical solutions satisfies
$$T_{max}(\epsilon m_0)\sim\frac{C}{\epsilon^2},$$
 where $C$ is a constant depends on $m_0(x).$

Our finite time blow-up results are similar to the blow-up results in \cite{Chenrongming,GuiLiuOlverQu,LiuOlver} but with some subtle differences.  All these three papers apply the idea from transport equation  and  focus on the derivative of $u^2-u_x^2$ which is $2mu_x$.  Comparing with  \cite[Theorem 5.2,5.3]{GuiLiuOlverQu}, we show finite time blow-up for $m_0$ which can change its sign. Besides, our starting point do not have to be the maximum point of $m_0$ in contrast with \cite[Theorem 1.3]{LiuOlver}. The main idea of our proof is similar to \cite[Theorem 1.5]{Chenrongming} which shows blow-up for a sign-changing $m_0$ with the effect of the linear dispersion term $\gamma u_x$ ($\gamma\geq0$). 

We have the following proposition.

\begin{proposition}\label{preparelemma1}
Suppose $m_0\in C_c^1(-L,L)$. Let $T_{max}$ be the maximal time of the existence of the corresponding classical solution $m(x,t)$ to \eqref{mCH}-\eqref{initial m}. $X\in C_1^2([-L,L]\times[0,T_{max}))$ is the solution to \eqref{Lagran dynamics}. \\
$\mathrm{(i)}$
If $\xi_0\in[-L,L]$ satisfies $m_0(\xi_0)\neq0$, then we have
\begin{align}\label{blowupintegration}
X_\xi(\xi_0,t)=1+2m_0(\xi_0)\int_0^t u_x(X(\xi_0,s),s)ds~\textrm{ for }~t\in[0,T_{max}).
\end{align}
$\mathrm{(ii)}$
We have the following lower bound for blow-up time
\begin{align}\label{blowupbelowbound}
T_{max}\geq \frac{1}{||m_0||_{L^\infty}||m_0||_{L^1}}.
\end{align}
\end{proposition}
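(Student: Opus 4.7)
The plan for part (i) is to derive the closed-form evolution of $X_\xi(\xi_0,\cdot)$ directly from the Lagrange dynamics. Differentiating the flow equation $\dot{X}(\xi,t) = (u^2-u_x^2)(X(\xi,t),t)$ in $\xi$ and using $(u^2-u_x^2)_x = 2 u_x(u-u_{xx}) = 2 u_x m$, I obtain $\dot{X}_\xi(\xi,t) = 2(m u_x)(X(\xi,t),t)\, X_\xi(\xi,t)$. Now I invoke the transport identity \eqref{keepsign}, namely $m(X(\xi,t),t) X_\xi(\xi,t) = m_0(\xi)$, to collapse the coefficient $m \cdot X_\xi$ into the time-independent quantity $m_0(\xi)$. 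The ODE becomes the affine relation $\dot{X}_\xi(\xi_0,t) = 2 m_0(\xi_0)\, u_x(X(\xi_0,t),t)$, and integrating against the initial datum $X_\xi(\xi_0,0) = 1$ yields precisely \eqref{blowupintegration}. The hypothesis $m_0(\xi_0)\neq 0$ is not strictly required at this stage, but it is what makes the identity interesting: when $m_0(\xi_0)=0$ one recovers only the trivial $X_\xi(\xi_0,t)\equiv 1$, consistent with Remark \ref{positivenegative}.

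For part (ii), I split on whether $T_{max}$ is finite. If $T_{max} = +\infty$, the bound is automatic, so assume $T_{max} < +\infty$. By the blow-up criterion \eqref{blowupfor lagrange} from Theorem \ref{criterion}, there exists $\xi_0 \in [-L,L]$ with $X_\xi(\xi_0,T_{max}) = 0$. A preliminary observation is that any such $\xi_0$ must satisfy $m_0(\xi_0) \neq 0$: by Remark \ref{positivenegative} (or directly from the ODE above), $X_\xi(\xi,t) \equiv 1$ whenever $m_0(\xi) = 0$, so a collision point cannot lie on the zero set of $m_0$. Hence part (i) applies at $\xi_0$, and letting $t \to T_{max}^-$ in \eqref{blowupintegration} gives
$$0 = 1 + 2 m_0(\xi_0) \int_0^{T_{max}} u_x(X(\xi_0,s),s)\,ds.$$
Using the uniform bound $|u_x(x,t)| \leq \frac{1}{2}||m_0||_{L^1}$ from \eqref{uuxUbound} together with $|m_0(\xi_0)| \leq ||m_0||_{L^\infty}$, I estimate
$$1 \leq 2 |m_0(\xi_0)| \cdot \frac{1}{2} ||m_0||_{L^1} \cdot T_{max} \leq ||m_0||_{L^\infty}\, ||m_0||_{L^1}\, T_{max},$$
which rearranges to \eqref{blowupbelowbound}.

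Both steps reduce to manipulations of identities already established earlier in the paper, so there is no substantive obstacle. The main care is in invoking \eqref{keepsign} and the blow-up characterization \eqref{blowupfor lagrange} correctly, and in noting that a collision point $\xi_0$ cannot lie in $\{m_0 = 0\}$, which is what guarantees $|m_0(\xi_0)|$ is a genuinely positive number in the final estimate.
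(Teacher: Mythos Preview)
Your proof is correct. For part (i) you take a slightly more direct path than the paper: the paper tracks $m(X(\xi_0,t),t)$ along the characteristic, derives $\frac{d}{dt}\bigl(1/m\bigr)=2u_x$ (which requires dividing by $m^2$ and hence the hypothesis $m_0(\xi_0)\neq 0$), integrates, and only then converts to $X_\xi$ via \eqref{keepsign}. You instead differentiate $X_\xi$ directly and use \eqref{keepsign} to collapse $m\,X_\xi$ into $m_0$, obtaining the linear ODE $\dot{X}_\xi=2m_0(\xi_0)u_x$ in one step. Your route is cleaner and, as you observe, shows that \eqref{blowupintegration} in fact holds for every $\xi_0$ (reducing to $X_\xi\equiv 1$ when $m_0(\xi_0)=0$), whereas the paper's derivation genuinely needs $m_0(\xi_0)\neq 0$ to avoid division by zero. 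For part (ii) your argument is the same as the paper's, with the added care of explicitly noting why a collision point must satisfy $m_0(\xi_0)\neq 0$; the paper's proof leaves this implicit.
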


\begin{proof}
(i)
The mCH equation \eqref{mCH} can be rewritten as
\begin{align}\label{burgers1}
m_t+(u^2-u_x^2)m_x=-2m^2u_x.
\end{align}
Therefore, we have
\begin{align*}
\frac{d}{dt}m(X(\xi,t),t)=-2(m^2u_x)(X(\xi,t),t).
\end{align*}
By \eqref{keepsign}, when $m_0(\xi_0)\neq0$ we know $m(X(\xi,t),t)\neq0$ and it will keep sign (positive or negative) for $t\in[0,T_{max})$. Hence
\begin{align}\label{minequa1}
\frac{1}{m^2(X(\xi_0,t),t)}\frac{d}{dt}m(X(\xi_0,t),t)=-2u_x(X(\xi_0,t),t).
\end{align}
This implies
\begin{align*}
\frac{d}{dt}\bigg(\frac{1}{m(X(\xi_0,t),t)}\bigg)=2u_x(X(\xi_0,t),t).
\end{align*}
Integrating from $0$ to $t$ leads to
\begin{align}\label{burgers}
\frac{1}{m(X(\xi_0,t),t)}=\int_0^t 2u_x(X(\xi_0,s),s)ds+\frac{1}{m_0(\xi_0)},
\end{align}
and combining \eqref{keepsign} gives \eqref{blowupintegration}.\\
(ii)
If $T_{max}<\frac{1}{||m_0||_{L^\infty}||m_0||_{L^1}}$, then \eqref{blowupintegration} and \eqref{uuxUbound} give that
$$X_\xi(\xi_0,T_{max})\geq 1-||m_0||_{L^\infty}||m_0||_{L^1}T_{max}>0,$$
which is a contradiction with the assumption of blow-up at $T_{max}$.

\end{proof}

In view of  equation \eqref{burgers1}, the most natural way  to study blow-up behavior is following the characteristics. This method was used for the Burgers equation and the CH equation.
Equality \eqref{burgers} reminds us the proof for finite time blow-up of Burgers equation:
\begin{align}\label{burgersequation}
u_t+uu_x=0, ~\textrm{ for }~x\in\mathbb{R},~t>0.
\end{align}
Consider its characteries $\dot{X}(x,t)=u(X(x,t),t)$ and we have
$$\frac{d}{dt}u(X(x,t),t)=0.$$
Taking derivative of \eqref{burgersequation} gives
$$u_{xt}+u_x^2+uu_{xx}=0.$$
Then we have
$$\frac{d}{dt}u_x(X(x,t),t)=(uu_{xx})(X(x,t),t)+u_{xt}(X(x,t),t)=-u_x^2(X(x,t),t),$$
which implies
\begin{align}\label{63}
\frac{1}{u_x(X(x,t),t)}=t+\frac{1}{u_{0x}(x)}.
\end{align}
Hence, if there exists $x_0\in\mathbb{R}$ such that $u_{0x}(x_0)<0$, then $u_x$ goes to $-\infty$ in finite time.

\eqref{burgers} is similar to \eqref{63}. But we can not have direct estimate on the blow-up time like the Burgers equation. Hence we need to give some estimate about $u_x$. We have the following lemma.

\begin{lemma}\label{preparelemma2}
Suppose $m_0\in C_c^1(-L,L)$ and $M_1:=||m_0||_{L^1}$. Let $T_{max}$ be the maximal time of existence of the corresponding classical solution $m(x,t)$ to \eqref{mCH}-\eqref{initial m}.
 $X\in C_1^2([-L,L]\times[0,T_{max}))$ is the solution to \eqref{Lagran dynamics}. Then we have
\begin{align}\label{dynamicsfor ux}
\bigg|\frac{d}{dt}u_x(X(\xi,t),t)\bigg|\leq \frac{M_1^3}{2}.
\end{align}

\end{lemma}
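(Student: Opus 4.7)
The plan is to differentiate the closed-form representation of $u_x$ along characteristics, bound the resulting integrand pointwise, and conclude. From \eqref{1}--\eqref{2} we already have the split representation
\begin{equation*}
u_x(X(\xi,t),t)=-\int_{-L}^\xi G(X(\xi,t)-X(\theta,t))m_0(\theta)\,d\theta+\int_\xi^L G(X(\xi,t)-X(\theta,t))m_0(\theta)\,d\theta,
\end{equation*}
valid because $X(\cdot,t)$ is strictly monotonic on $[-L,L]$ for $t\in[0,T_{max})$ (Theorem \ref{noregularityresults}). I will differentiate this identity in $t$ for fixed $\xi$; the upper and lower integration limits are constants in $t$, so no boundary contributions arise.

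Next I carry out the $t$-derivative under each integral sign. For $\theta<\xi$ the argument $X(\xi,t)-X(\theta,t)$ is positive, so $G'=-G$ there; for $\theta>\xi$ the argument is negative, so $G'=+G$. In either case the two minus signs combine with the outer signs in the split representation to give the single clean expression
\begin{equation*}
\frac{d}{dt}u_x(X(\xi,t),t)=\int_{-L}^{L}G(X(\xi,t)-X(\theta,t))\,\bigl[\dot X(\xi,t)-\dot X(\theta,t)\bigr]\,m_0(\theta)\,d\theta.
\end{equation*}

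Now I estimate. Since $G=\tfrac12 e^{-|\cdot|}$ satisfies $\|G\|_{L^\infty}\le\tfrac12$, and since $\dot X=U$ with the a~priori bound $|U|\le\tfrac12 M_1^2$ from \eqref{uuxUbound}, the factor in square brackets is bounded by $M_1^2$. Integrating against $|m_0|$ and using $\int_{-L}^L|m_0|\,d\theta=M_1$ yields
\begin{equation*}
\Bigl|\frac{d}{dt}u_x(X(\xi,t),t)\Bigr|\le\tfrac12\cdot M_1^2\cdot M_1=\tfrac{M_1^3}{2},
\end{equation*}
which is \eqref{dynamicsfor ux}. I do not anticipate a real obstacle: the only subtlety is justifying the differentiation under the integral across $\theta=\xi$, which is legitimate because the two pieces of the split integrand agree at $\theta=\xi$ (both equal $-G(0)m_0(\xi)\cdot 0$ and $G(0)m_0(\xi)\cdot 0$ respectively, i.e.\ the apparent jump is smoothed out by the factor $\dot X(\xi,t)-\dot X(\theta,t)$ vanishing at $\theta=\xi$), and the regularity $X\in C_1^2$ from Theorem \ref{noregularityresults} provides the needed smoothness of all terms involved.
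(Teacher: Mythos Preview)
Your argument is correct, and it takes a genuinely different route from the paper's. The paper works in Eulerian coordinates: starting from \eqref{mCH} it derives, after a page of algebraic manipulation involving the Helmholtz operator and several integrations by parts, the identity
\[
u_{xt}+(u^2-u_x^2)u_{xx}=-uu_x^2-G\ast(uu_x^2)+\tfrac{2}{3}u^3-\tfrac{2}{3}G\ast(u^3)-\tfrac{1}{3}G'\ast(u_x^3),
\]
and then bounds the right side term by term via Young's inequality and \eqref{uuxUbound}. Your approach instead differentiates the split Lagrangian formula for $u_x(X(\xi,t),t)$ directly in $t$, obtaining the clean single integral
\[
\frac{d}{dt}u_x(X(\xi,t),t)=\int_{-L}^{L}G(X(\xi,t)-X(\theta,t))\bigl[\dot X(\xi,t)-\dot X(\theta,t)\bigr]m_0(\theta)\,d\theta,
\]
and bounds it in one line using $\|G\|_{L^\infty}\le\tfrac12$, $|\dot X|\le\tfrac12 M_1^2$, and $\|m_0\|_{L^1}=M_1$. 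Your route is shorter and more elementary; the paper's route has the side benefit of producing an explicit pointwise Eulerian expression for the material derivative of $u_x$, which could be of independent use, but for the purpose of this lemma your computation is the more direct one. Your remark about differentiation under the integral is adequate: for each $\theta\neq\xi$ the $t$-derivative of the integrand exists and is uniformly bounded by $\tfrac12 M_1^2|m_0(\theta)|$, so dominated convergence applies.
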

\begin{proof}

From \eqref{mCH}, we obtain
\begin{align*}
(1-\partial_{xx})u_t+(1-\partial_{xx})[(u^2-u_x^2)u_x]&=-[(u^2-u_x^2)m]_x+(1-\partial_{xx})[(u^2-u_x^2)u_x]\\
&=-2u_xm^2-6u_xu_{xx}m-2u_x^2m_x,
\end{align*}
which implies
\begin{align}\label{minquality2}
u_t+(u^2-u_x^2)u_x&=-(1-\partial_{xx})^{-1}[2u_xm^2+6u_xu_{xx}m+2u_x^2m_x].
\end{align}
Taking derivative to \eqref{minquality2} with respect to $x$ yields
\begin{align*}
u_{xt}+2u_x^2m+(u^2-u_x^2)u_{xx}=-\partial_x(1-\partial_{xx})^{-1}[2u_xm^2+6u_xu_{xx}m+2u_x^2m_x]\\
=-\partial_x(1-\partial_{xx})^{-1}[2u^2u_x+2uu_xu_{xx}-4u_xu_{xx}^2+2u_x^3-2u_x^2u_{xxx}].
\end{align*}
Due to
$\partial_{xx}(u_x^3)=6u_xu_{xx}^2+3u_x^2u_{xxx}$ and $\partial_{xx}(uu_x^2)=5u_x^2u_{xx}+2uu_{xx}^2+2u_xu_{xxx}$,
we have
\begin{align*}
&\quad u_{xt}+2u_x^2m+(u^2-u_x^2)u_{xx}\\
&=-\partial_x(1-\partial_{xx})^{-1}\Big[2u^2u_x+2uu_xu_{xx}+2u_x^3+\frac{2}{3}(1-\partial_{xx})u_x^3-\frac{2}{3}u_x^3\Big]\\
&=-2u_x^2u_{xx}-(1-\partial_{xx})^{-1}\Big[5uu_x^2+2u^2u_{xx}+u_x^2u_{xx}-(1-\partial_{xx})(uu_x^2)\Big]\\
&=uu_x^2-2u_x^2u_{xx}-(1-\partial_{xx})^{-1}\Big[5uu_x^2+2u^2u_{xx}+u_x^2u_{xx}\Big].
\end{align*}
After some calculation we obtain
\begin{align*}
u_{xt}+(u^2-u_x^2)u_{xx}=-uu_x^2-(1-\partial_{xx})^{-1}[5uu_x^2+2u^2u_{xx}+u_x^2u_{xx}].
\end{align*}
For the last two terms on the right side, integration by parts shows that
\begin{align*}
-(1-\partial_{xx})^{-1}[2u^2u_{xx}+u_x^2u_{xx}]=-G\ast\Big(2u(uu_x)_x-2uu_x^2+\frac{1}{3}(u_x^3)_x\Big)\\
=-G\ast\Big(2(u^2u_x)_x-4uu_x^2+\frac{1}{3}(u_x^3)_x\Big)=4G\ast(uu_x^2)-G'\ast\Big(2u^2u_x+\frac{1}{3}u_x^3\Big).
\end{align*}
Hence
\begin{align*}
u_{xt}+(u^2-u_x^2)u_{xx}&=-uu_x^2-G\ast(uu_x^2)-G'\ast\Big(\frac{2}{3}(u^3)_x+\frac{1}{3}u_x^3\Big)\\
&=-uu_x^2-G\ast(uu_x^2)+\frac{2}{3}u^3-\frac{2}{3}G\ast(u^3)-G'\ast\Big(\frac{1}{3}u_x^3\Big)
\end{align*}
Young's inequality and  \eqref{uuxUbound}  give that
\begin{align*}
|u_{xt}+(u^2-u_x^2)u_{xx}|&\leq ||uu_x^2||_{L^\infty}+\Big|\Big|uu_x^2-\frac{2}{3}u^3\Big|\Big|_{L^\infty}||G||_{L^1}+\frac{2}{3}||u||_{L^\infty}^3+\Big|\Big|\frac{1}{3}u_x^3\Big|\Big|_{L^\infty}||G'||_{L^1}\\
&\leq\frac{1}{2}M_1^3,
\end{align*}
which implies \eqref{dynamicsfor ux}.

\end{proof}

Next, we state and prove our main results in this section.

\begin{theorem}\label{finite time blow up theorem}
Suppose $m_0\in C_c^1(-L,L)$ and $M_1:=||m_0||_{L^1}$.  Let $T_{max}$ be the maximal time of existence of the classical solution $m(x,t)$ to \eqref{mCH}-\eqref{initial m}. $X\in C_1^2([-L,L]\times[0,T_{max}))$ is the solution to \eqref{Lagran dynamics}. If there is a $\xi_0\in[-L,L]$ such that $m_0(\xi_0)>0$ and
\begin{align}\label{initialcondition}
-\partial_xu_0(\xi_0)>\sqrt{\frac{M_1^3}{2m_0(\xi_0)}},
\end{align}
then $m(x,t)$ defined by \eqref{eq:umdef} blows up at a time
\begin{equation}\label{blowuptimelessthen}
T_{max}\leq t^*:=\frac{2}{M_1^3}\bigg(-\partial_xu_0(\xi_0)-\sqrt{\Big[\partial_xu_0(\xi_0)\Big]^2-\frac{M_1^3}{2m_0(\xi_0)}}\bigg).
\end{equation}

Moreover, when $T_{max}=t^*$, we have the following estimate of the blow-up rate for $m$:
\begin{equation}\label{blow-up rate for m)}
||m(\cdot ,t)||_{L^\infty}\geq\frac{1}{C(T_{max}-t)}\textrm{ for }t\in[0,T_{max}),
\end{equation}
and for $X_\xi$ we have
\begin{equation}\label{blow-up rate for lagrange}
\inf_{\xi\in(-L,L)}X_\xi(\xi,t)\leq Cm_0(\xi_0)(T_{max}-t)\textrm{ for }t\in[0,T_{max}),
\end{equation}
Where
$$C=-\partial_xu_0(\xi_0)+\sqrt{\Big[\partial_xu_0(\xi_0)\Big]^2-\frac{M_1^3}{2m_0(\xi_0)}}.$$

\end{theorem}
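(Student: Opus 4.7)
The plan is to follow the characteristic through $\xi_0$ and reduce the question to a scalar quadratic estimate. Set $h(t):=u_x(X(\xi_0,t),t)$. From the proof of Proposition~\ref{preparelemma1} (equation \eqref{burgers}),
\[
\frac{1}{m(X(\xi_0,t),t)} \;=\; \frac{1}{m_0(\xi_0)} + 2\int_0^t h(s)\,ds ,
\]
and by \eqref{keepsign} together with $X_\xi>0$ on $[0,T_{max})$, $m(X(\xi_0,t),t)$ remains strictly positive on $[0,T_{max})$. The crucial input is Lemma~\ref{preparelemma2}: $|h'(t)|\le M_1^3/2$, so $h(t)\le h(0)+\tfrac{M_1^3}{2}t$ with $h(0)=\partial_x u_0(\xi_0)<0$ by \eqref{initialcondition}. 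Plugging this bound in yields
\[
\frac{1}{m(X(\xi_0,t),t)} \;\le\; P(t) \;:=\; \frac{M_1^3}{2}t^2 + 2h(0)t + \frac{1}{m_0(\xi_0)} .
\]

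Hypothesis \eqref{initialcondition} states precisely that the discriminant of $P$ is positive. Since $P(0)=1/m_0(\xi_0)>0$ and the linear coefficient $2h(0)$ is negative, the roots of $P$ are both real and positive; calling them $t^*<t^{**}$, a direct computation recovers $t^*$ as in \eqref{blowuptimelessthen} and gives $\tfrac{M_1^3}{2}t^{**}=C$, where $C$ is the constant defined in the theorem. If the classical solution existed all the way up to $t^*$, then $1/m(X(\xi_0,\cdot),\cdot)$ would be continuous and strictly positive on $[0,t^*]$, contradicting $P(t^*)=0$. Hence $T_{max}\le t^*$.

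Suppose now $T_{max}=t^*$. Factor $P(t)=\tfrac{M_1^3}{2}(t^*-t)(t^{**}-t)$ and use $t^{**}-t\le t^{**}=2C/M_1^3$ for $t\in[0,T_{max})$ to get $1/m(X(\xi_0,t),t)\le C(T_{max}-t)$, which inverts to $\|m(\cdot,t)\|_{L^\infty}\ge m(X(\xi_0,t),t)\ge 1/[C(T_{max}-t)]$, proving \eqref{blow-up rate for m)}. For \eqref{blow-up rate for lagrange}, use $X_\xi(\xi_0,t)=m_0(\xi_0)/m(X(\xi_0,t),t)$, which is a consequence of \eqref{keepsign}; the previous bound becomes $X_\xi(\xi_0,t)\le Cm_0(\xi_0)(T_{max}-t)$, and taking the infimum over $\xi$ on the left delivers the claim. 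The main obstacle is not conceptual but notational: one has to check that the quadratic majorant supplied by Lemma~\ref{preparelemma2} has discriminant exactly matching the threshold in \eqref{initialcondition}, and that its factored form yields both blow-up rates with the same constant $C$ appearing in the statement.
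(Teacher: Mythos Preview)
Your proof is correct and follows essentially the same approach as the paper's own proof: both track $1/m(X(\xi_0,t),t)$ along the characteristic, apply Lemma~\ref{preparelemma2} to bound the second derivative (or equivalently $h'$), and arrive at the identical quadratic majorant $P(t)=\tfrac{M_1^3}{2}t^2+2\partial_xu_0(\xi_0)t+1/m_0(\xi_0)$, then factor it to read off $t^*$, $t^{**}$ (the paper's $t_*$), and the rate constant $C=\tfrac{M_1^3}{2}t^{**}$. The only difference is cosmetic ordering of the two integrations; the logic, the key inputs (\eqref{burgers}, \eqref{keepsign}, Lemma~\ref{preparelemma2}), and the resulting estimates all coincide.
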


\begin{proof}
\emph{Step 1.}

Assume $m_0(\xi_0)>0$.
Combining \eqref{minequa1} and  \eqref{dynamicsfor ux} shows that
\begin{align}\label{minequa2}
\frac{d}{dt}\bigg(\frac{1}{m^2(X(\xi_0,t),t)}\frac{d}{dt}m(X(\xi_0,t),t)\bigg)= -2 \frac{d}{dt}u_x(X(\xi_0,t),t)\geq -M_1^3.
\end{align}
Integrating \eqref{minequa2}  shows that
\begin{align}\label{minequa3}
\frac{1}{m^2(X(\xi_0,t),t)}\frac{d}{dt}m(X(\xi_0,t),t)\geq -M_1^3t-2\partial_xu_0(\xi_0)
\end{align}
where we used
$$\frac{1}{m^2(X(\xi_0,t),t)}\frac{d}{dt}m(X(\xi_0,t),t)\bigg|_{t=0}=-2\partial_xu_0(\xi_0).$$
Integrating \eqref{minequa3} gives
\begin{align*}
\frac{1}{m(X(\xi_0,t),t)}\leq\frac{1}{2}M_1^3t^2+2\partial_xu_0(\xi_0)t+\frac{1}{m_0(\xi_0)}.
\end{align*}
If $\xi_0$ satisfies \eqref{initialcondition}, then we have
$$\frac{1}{2}M_1^3t^2+2\partial_xu_0(\xi_0)t+\frac{1}{m_0(\xi_0)}=\frac{1}{2}M_1^3(t-t^*)(t-t_*),$$
where
$$t^*=\frac{2}{M_1^3}\bigg(-\partial_xu_0(\xi_0)-\sqrt{\Big[\partial_xu_0(\xi_0)\Big]^2-\frac{M_1^3}{2m_0(\xi_0)}}\bigg)$$
and
$$t_*=\frac{2}{M_1^3}\bigg(-\partial_xu_0(\xi_0)+\sqrt{\Big[\partial_xu_0(\xi_0)\Big]^2-\frac{M_1^3}{2m_0(\xi_0)}}\bigg).$$
Hence
\begin{align}\label{minequa5}
0<\frac{1}{m(X(\xi_0,t),t)}\leq\frac{1}{2}M_1^3(t-t^*)(t-t_*).
\end{align}
This implies that there is a time $0<T_{max}\leq t^*$ such that
$$m(X(\xi_0,t),t)\rightarrow+\infty,\textrm{ as }t\rightarrow T_{max}$$
which means $m(x,t)$ blows up at the time $T_{max}.$

\emph{Step 2.}

Assume $T_{max}=t^*.$ From \eqref{minequa5}, we have
\begin{align*}
||m(\cdot ,t)||_{L^\infty}&\geq m(X(\xi_0,t),t)\geq\frac{2}{M_1^3(t-t^*)(t-t_*)}\\
&\geq\frac{2}{M_1^3t_*(T_{max}-t)}=\frac{1}{C(T_{max}-t)}.
\end{align*}
Hence, we have \eqref{blow-up rate for m)}.

From \eqref{keepsign} and \eqref{minequa5}, we have
\begin{align*}
\inf_{\xi\in(-L,L)}X_\xi(\xi,t)&\leq X_\xi(\xi_0,t)=\frac{m_0(\xi_0)}{m(X(\xi_0,t),t)}\leq\frac{1}{2}m_0(\xi_0)M_1^3(t-t^*)(t-t_*)\\
&\leq \frac{1}{2}m_0(\xi_0)M_1^3t_*(T_{max}-t)\leq Cm_0(\xi_0)(T_{max}-t).
\end{align*}
Hence, \eqref{blow-up rate for lagrange} follows and this ends the proof.

\end{proof}

Similarly, we have the following theorem.

\begin{theorem}\label{finite time blow up theorem 2}
Suppose $m_0\in C_c^1(-L,L)$ and $M_1:=||m_0||_{L^1}$.  Let $T_{max}$ be the maximal time of existence of the classical solution $m(x,t)$ to \eqref{mCH}-\eqref{initial m}. $X\in C_1^2([-L,L]\times[0,T_{max}))$ is the solution to \eqref{Lagran dynamics}. If there is a $\xi_1\in[-L,L]$ such that $m_0(\xi_1)<0$ and
\begin{align}\label{initialcondition2}
\partial_xu_0(\xi_1)>\sqrt{\frac{M_1^3}{-2m_0(\xi_1)}},
\end{align}
then $m(x,t)$ defined by \eqref{eq:umdef} blows up at a time
\begin{equation*}
T_{max}\leq t^*:=\frac{2}{M_1^3}\bigg(\partial_xu_0(\xi_1)-\sqrt{\Big[\partial_xu_0(\xi_1)\Big]^2+\frac{M_1^3}{2m_0(\xi_1)}}\bigg).
\end{equation*}

Moreover, when $T_{max}=t^*$, we have the following estimate of the blow-up rate for $m(x,t)$:
\begin{equation*}
||m(\cdot ,t)||_{L^\infty}\geq\frac{1}{C(T_{max}-t)}\textrm{ for }t\in[0,T_{max}),
\end{equation*}
and for $X_\xi$ we have
\begin{equation*}
\inf_{\xi\in(-L,L)}X_\xi(\xi,t)\leq Cm_0(\xi_1)(t-T_{max})\textrm{ for }t\in[0,T_{max}),
\end{equation*}
Where
$$C=\partial_xu_0(\xi_1)+\sqrt{\Big[\partial_xu_0(\xi_1)\Big]^2+\frac{M_1^3}{2m_0(\xi_1)}}.$$

\end{theorem}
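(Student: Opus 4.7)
The proof plan is to mirror the argument of Theorem \ref{finite time blow up theorem} with sign adjustments appropriate for a negative-mass characteristic. By \eqref{keepsign}, since $m_0(\xi_1)<0$, the quantity $m(X(\xi_1,t),t)$ remains strictly negative on $[0,T_{max})$, so $1/m(X(\xi_1,t),t)$ is a smooth negative-valued function of $t$. Starting from \eqref{burgers1} along the characteristic $X(\xi_1,\cdot)$, we obtain
\begin{equation*}
\frac{d}{dt}\!\left(\frac{1}{m(X(\xi_1,t),t)}\right)=2u_x(X(\xi_1,t),t),
\end{equation*}
exactly as in the proof of the positive-mass case. Differentiating once more and invoking Lemma \ref{preparelemma2}, the bound $|\frac{d}{dt}u_x(X(\xi_1,t),t)|\leq M_1^3/2$ yields the two-sided estimate $-M_1^3\leq\frac{d^2}{dt^2}(1/m(X(\xi_1,t),t))\leq M_1^3$.

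In Theorem \ref{finite time blow up theorem} the upper bound was used to make $1/m$ descend to $0$ from above. Here $1/m_0(\xi_1)<0$, so I will instead use the lower bound $\frac{d^2}{dt^2}(1/m)\geq -M_1^3$ together with the initial slope $\frac{d}{dt}(1/m)|_{t=0}=2\partial_xu_0(\xi_1)>0$. Integrating twice from $0$ produces
\begin{equation*}
\frac{1}{m(X(\xi_1,t),t)}\;\geq\;-\frac{M_1^3}{2}t^2+2\partial_xu_0(\xi_1)\,t+\frac{1}{m_0(\xi_1)}\;=\;-\frac{M_1^3}{2}(t-t^*)(t-t_*),
\end{equation*}
where $t^*<t_*$ are the two roots of the right-hand side, given explicitly by the formulas written in the theorem. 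Hypothesis \eqref{initialcondition2} is precisely the condition that the discriminant $(\partial_xu_0(\xi_1))^2+M_1^3/(2m_0(\xi_1))$ is strictly positive, so both roots are real, and since $\partial_xu_0(\xi_1)>0$ both are positive with the smaller one being $t^*$. Because $1/m(X(\xi_1,\cdot),t)$ starts negative and is squeezed above a quadratic that vanishes at $t^*$, it must reach $0$ from below no later than $t^*$, forcing $|m(X(\xi_1,t),t)|\to\infty$ at some $T_{max}\leq t^*$.

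For the blow-up rates, assume $T_{max}=t^*$. The quadratic lower bound rewrites as $-1/m(X(\xi_1,t),t)\leq\tfrac{M_1^3}{2}(t^*-t)(t_*-t)\leq\tfrac{M_1^3 t_*}{2}(T_{max}-t)=C(T_{max}-t)$, from which
\begin{equation*}
\|m(\cdot,t)\|_{L^\infty}\geq -m(X(\xi_1,t),t)\geq\frac{1}{C(T_{max}-t)},
\end{equation*}
with $C=\partial_xu_0(\xi_1)+\sqrt{(\partial_xu_0(\xi_1))^2+M_1^3/(2m_0(\xi_1))}=M_1^3 t_*/2$. The bound on $X_\xi$ then follows from \eqref{keepsign}: since both $m_0(\xi_1)$ and $m(X(\xi_1,t),t)$ are negative, multiplying the inequality $1/m(X(\xi_1,t),t)\geq-\tfrac{M_1^3}{2}(t^*-t)(t_*-t)$ by $m_0(\xi_1)<0$ reverses the direction and gives $X_\xi(\xi_1,t)\leq Cm_0(\xi_1)(t-T_{max})$, yielding the stated control on $\inf_{\xi}X_\xi(\xi,t)$.

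There is no real analytical obstacle here, since every ingredient (the ODE along characteristics, Lemma \ref{preparelemma2}, the sign preservation from \eqref{keepsign}) is already in place; the only care required is bookkeeping of signs, in particular remembering that it is the lower bound on $\frac{d^2}{dt^2}(1/m)$ rather than the upper bound that drives blow-up when $m_0(\xi_1)<0$, and that multiplication by the negative quantity $m_0(\xi_1)$ reverses inequalities in the final step for $X_\xi$.
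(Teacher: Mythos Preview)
Your proposal is correct and follows precisely the route the paper intends: the paper does not write out the proof of this theorem at all, stating only ``Similarly, we have the following theorem'' after proving Theorem~\ref{finite time blow up theorem}. Your sign-bookkeeping---using the lower bound $\frac{d^2}{dt^2}(1/m)\ge -M_1^3$ so that the negative quantity $1/m(X(\xi_1,t),t)$ is squeezed from below by a downward-opening quadratic crossing zero at $t^*$---is exactly the symmetric counterpart of the paper's argument in the positive-mass case, and the derivation of the blow-up rates and the $X_\xi$ bound via \eqref{keepsign} with the inequality reversal is carried out correctly.
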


From conditions \eqref{initialcondition} and \eqref{initialcondition2},  if there exists  $\overline{\xi}\in[-L,L]$ such that \eqref{initialu0} holds,
then the classical solution will blow up in finite time.

Now we can prove Theorem \ref{lifespan}.
\begin{proof}[Proof of Theorem \ref{lifespan}]
(i) \eqref{lifespanlowbound} follows from \eqref{blowupbelowbound}.

(ii) Let $m_0$ satisfies the assumptions in Theorem \ref{finite time blow up theorem}. Then, for any $\epsilon>0$ we know $\epsilon m_0$ also satisfies the assumptions. Hence, from  \eqref{blowuptimelessthen} we have
$$T_{max}(\epsilon m_0)\leq \frac{2||\epsilon u_x||_{L^\infty}}{||\epsilon m_0||^3_{L^1}}\leq \frac{1}{||m_0||^2_{L^1}}\cdot\frac{1}{\epsilon^2},$$
where \eqref{uuxUbound} was used.
Together with \eqref{lifespanlowbound} we can obtain \eqref{lifespanupbound}.
\end{proof}

\section{Solutions at blow-up time  and formation of peakons}\label{sec4}
In this section, we study the behavior of  classical solutions at  blow-up time $T_{max}$.

First, we show that $u$ and $u_x$ are uniformly BV function for $t\in[0,T_{max}]$ (including the blow-up time $T_{max}$) and $m(\cdot ,t)$ has a unique limit in Radon measure space as  $t$ approaching  $T_{max}$.

Let us recall the concept of the space $BV(\mathbb{R})$.
\begin{definition}\label{BV}
$\mathrm{(i)}$
For dimention $d\geq1$ and an open set $\Omega\subset\mathbb{R}^d$, a function $f\in L^1(\Omega)$ belongs to $BV(\Omega)$ if
$$Tot.Var.\{f\}:=\sup\Big\{\int_{\Omega}f(x)\nabla\cdot\phi(x)dx:\phi\in C_c^1(\Omega;\mathbb{R}^d), ||\phi||_{L^\infty}\leq1\Big\}<\infty.$$
$\mathrm{(ii)}$ (Equivalent definition for one dimension case) A function $f$ belongs to $BV(\mathbb{R})$ if for any $\{x_i\}\subset\mathbb{R}$, $x_i<x_{i+1}$, the following statement holds:
$$Tot.Var.\{f\}:=\sup_{\{x_i\}}\Big\{\sum_i|f(x_i)-f(x_{i-1})|\Big\}<\infty.$$
\end{definition}
\begin{remark}
Let $\Omega\subset\mathbb{R}^d$ for $d\geq1$ and $f\in BV(\Omega)$. $Df:=(D_{x_1}f,\ldots,D_{x_d}f)$ is the distributional gradient of $f$. Then, $Df$ is a vector Radon measure and  the total variation of $f$ is equal to the total variation of $|D f|$: $Tot.Var.\{f\}=|D f|(\Omega).$  Here, $|D f|$ is the total variation measure of the vector measure $Df$ (\cite[Definition (13.2)]{Leoni}).

If a function $f:\mathbb{R}\rightarrow \mathbb{R}$ satisfies Definition \ref{BV} (ii), then $f$ satisfies Definition (i). On the contrary, if $f$ satisfies Definition \ref{BV} (i), then there exists a right continuous representative which satisfies Definition (ii). See \cite[Theorem 7.2]{Leoni} for the proof.
\end{remark}

We have the following theorem about $u$ and $u_x$ at $T_{max}$.
\begin{theorem}\label{uniform bv}
Let $m_0\in C_c^1(-L,L)$ and $M_1:=||m_0||_{L^1}$. Let $T_{max}$ be the maximal existence time for the classical solution $m(x,t)$ to \eqref{mCH}-\eqref{initial m} and $X\in C^{2}_1([-L,L]\times[0,T_{max}])$ be the solution to \eqref{Lagran dynamics}. Then, the following assertions hold:\\
$(\mathrm{i})$
There exists a function $u(x,T_{max})$ such that
\begin{align}\label{BV1}
\lim_{t\rightarrow T_{max}}u(x,t)=u(x,T_{max}),\quad \lim_{t\rightarrow T_{max}}u_x(x,t)=u_x(x,T_{max})\textrm{ for every }x\in\mathbb{R}.
\end{align}
$(\mathrm{ii})$
For any $t\in[0,T_{max}]$ we have
$$u(\cdot,t),~~u_x(\cdot,t)\in BV(\mathbb{R})$$
and
\begin{align}\label{uniforBV}
\mathrm{Tot.Var.}\{u(\cdot, t)\}\leq M_1,\quad \mathrm{Tot.Var.}\{u_x(\cdot, t)\}\leq 2M_1.
\end{align}

\end{theorem}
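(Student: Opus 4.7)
By Lemma~\ref{XTmaxlemma}, $X$ extends continuously to $C^2_1([-L,L]\times[0,T_{max}])$ with $X(\cdot,T_{max})$ monotone non-decreasing and $X(\theta,t)\to X(\theta,T_{max})$ for every $\theta$. I would first establish the BV bounds, then the pointwise limits.

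\emph{BV bounds for $t\in[0,T_{max})$.} The classical solution being smooth gives $\mathrm{Tot.Var.}\{u(\cdot,t)\}=\|u_x(\cdot,t)\|_{L^1}$ and $\mathrm{Tot.Var.}\{u_x(\cdot,t)\}=\|u_{xx}(\cdot,t)\|_{L^1}$. The integral representation $u_x(x,t)=\int_{-L}^L G'(x-X(\theta,t))m_0(\theta)\,d\theta$ together with Fubini and $\|G'\|_{L^1(\mathbb{R})}=1$ yield $\|u_x(\cdot,t)\|_{L^1}\le M_1$. For the second bound, $u_{xx}=u-m$ (from $m=u-u_{xx}$) gives $\|u_{xx}\|_{L^1}\le\|u\|_{L^1}+\|m(\cdot,t)\|_{L^1}$; the change of variable $y=X(\theta,t)$ combined with \eqref{keepsign} gives $\|m(\cdot,t)\|_{L^1}=M_1$, and $\|u\|_{L^1}\le\|G\|_{L^1}M_1=M_1$, so $\mathrm{Tot.Var.}\{u_x(\cdot,t)\}\le 2M_1$. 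The bounds at $t=T_{max}$ will follow from lower semicontinuity of total variation under pointwise convergence on $\mathbb{R}$ once (i) is established.

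\emph{Pointwise limit of $u$.} Define $u(x,T_{max}):=\int_{-L}^L G(x-X(\theta,T_{max}))m_0(\theta)\,d\theta$. Since $G$ is continuous with $|G|\le\tfrac12$, dominated convergence with dominator $\tfrac12|m_0(\theta)|$ yields $u(x,t)\to u(x,T_{max})$ for every $x\in\mathbb{R}$.

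\emph{Pointwise limit of $u_x$, and the main obstacle.} Using $G'(y)=-\operatorname{sgn}(y)G(y)$ for $y\neq 0$ and strict monotonicity of $X(\cdot,t)$ for $t<T_{max}$, rewrite
\[
u_x(x,t)=-\int_{\{X(\theta,t)<x\}}G(x-X(\theta,t))m_0(\theta)\,d\theta+\int_{\{X(\theta,t)>x\}}G(x-X(\theta,t))m_0(\theta)\,d\theta.
\]
For $x\notin\widehat F_{T_{max}}$ the preimage $X^{-1}(x,T_{max})$ is Lebesgue null, so the indicators $\mathbf 1_{\{X(\theta,t)\lessgtr x\}}$ stabilize for a.e.\ $\theta$ and dominated convergence delivers the limit. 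The main obstacle is $x_0\in\widehat F_{T_{max}}$ with $X^{-1}(x_0,T_{max})=[\xi_1,\xi_2]$, $\xi_1<\xi_2$: the $\theta\in[\xi_1,\xi_2]$ contribution reduces, via $G(x_0-X(\theta,t))\to G(0)=\tfrac12$, to $\tfrac12\bigl(\int_{\xi^*(x_0,t)}^{\xi_2}m_0\,d\theta-\int_{\xi_1}^{\xi^*(x_0,t)}m_0\,d\theta\bigr)$, where $\xi^*(x_0,t):=X(\cdot,t)^{-1}(x_0)$; convergence of $u_x(x_0,t)$ thus reduces to convergence of $\xi^*(x_0,t)$ as $t\to T_{max}$. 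My strategy is to combine the ODE $\dot\xi^*(x_0,t)=-U(x_0,t)/X_\xi(\xi^*(x_0,t),t)$ with the conservation identity $\partial_t\bigl(\int_{-\infty}^{x_0}m(y,t)\,dy\bigr)=-U(x_0,t)m(x_0,t)$ and Proposition~\ref{discontinuousset} (which gives $m_0$ fixed sign on $[\xi_1,\xi_2]$, so $m(x_0,t)$ has fixed sign near $T_{max}$) to force $\xi^*(x_0,t)$ to be eventually monotone in $t$, hence convergent; dominated convergence in the split representation then yields $u_x(x_0,t)\to u_x(x_0,T_{max})$, completing the proof.
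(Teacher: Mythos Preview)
Your BV estimates and the pointwise limit for $u$ are correct; the paper gets the same bounds by estimating $\sum_i|u(x_i,t)-u(x_{i-1},t)|$ directly against $\mathrm{Tot.Var.}\{G\}\cdot M_1$ and $\mathrm{Tot.Var.}\{G_x\}\cdot M_1$, but your $L^1$ computation via Fubini and \eqref{keepsign} is equally valid.

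The divergence is in the treatment of $u_x$. The paper does \emph{not} attempt a direct dominated-convergence argument at all. Instead it uses Helly's selection principle: from the uniform bounds $\|u_x(\cdot,t)\|_{L^\infty}\le\tfrac12 M_1$ and $\mathrm{Tot.Var.}\{u_x(\cdot,t)\}\le 2M_1$ one extracts a sequence $t_k\to T_{max}$ with $u_x(\cdot,t_k)\to\tilde v$ pointwise everywhere, identifies $\tilde v$ as the distributional derivative of $u(\cdot,T_{max})$ via the already-established pointwise limit of $u$, and then \emph{defines} $u_x(x,T_{max}):=\tilde v(x)$. The case split $x\in\widehat F_{T_{max}}$ versus $x\notin\widehat F_{T_{max}}$ never arises; that structure is analyzed only later, in Proposition~\ref{discontinuousset}, which logically sits \emph{after} this theorem.

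Your route has a genuine gap at the ``main obstacle''. The monotonicity of $\xi^*(x_0,t)$ is governed by the sign of $\dot\xi^*=-U(x_0,t)/X_\xi(\xi^*,t)$, i.e.\ by the sign of $U(x_0,t)=u^2(x_0,t)-u_x^2(x_0,t)$; nothing you have available forces $U(x_0,\cdot)$ to have eventual fixed sign near $T_{max}$, and trying to pin down $u_x(x_0,t)$ in order to do so is exactly what you are proving. The conservation identity you invoke gives $\int_{-L}^{\xi^*(x_0,t)}m_0\,d\theta=\int_{-\infty}^{x_0}m(y,t)\,dy=\int_{-\infty}^{x_0}u\,dy-u_x(x_0,t)$, so convergence of $\xi^*$ is \emph{equivalent} to convergence of $u_x(x_0,t)$ and cannot be used to deduce it. Finally, citing Proposition~\ref{discontinuousset} here is circular in the paper's ordering (its proof uses \eqref{uxinotmax}, which rests on the present theorem); the one fact you actually need from it---that $m_0$ has fixed sign on $[\xi_1,\xi_2]$---follows directly from Remark~\ref{positivenegative}, but this does not rescue the monotonicity claim. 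The Helly argument sidesteps all of this.
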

\begin{proof}

We use three steps to prove (i) and (ii) together.

\emph{Step 1.} We prove $u\in C(\mathbb{R}\times[0,T_{max}])$.

Due to \eqref{Xtmax}  and $u(x,t)=\int_{-L}^LG(x-X(\theta,t))m_0(\theta)d\theta$ for $t\in[0,T_{max})$, let $t$ go to $T_{max}$  and we obtain
$$u(x,T_{max})=\int_{-L}^LG(x-X(\theta,T_{max}))m_0(\theta)d\theta.$$
Moreover, we have $u\in C(\mathbb{R}\times[0,T_{max}])$.

\emph{Step 2.} For $0\leq t<T_{max}$, we prove \eqref{uniforBV}.

For $G=\frac{1}{2}e^{-|x|}$, we know $G,~G_x\in BV(\mathbb{R})$ and the following holds
$$\mathrm{Tot.Var.}\{G\}=1,\quad \mathrm{Tot.Var.}\{G_x\}=2.$$
 When $t\in[0,T_{max})$, for any $\{x_i\}\subset\mathbb{R}$, $x_i<x_{i+1}$, we have
\begin{align*}
\sum_i|u(x_{i},t)-u(x_{i-1},t)|&\leq\int_{-L}^L\sum_i|G(x_i-X(\theta,t))-G(x_{i-1}-X(\theta,t))||m_0(\theta)|d\theta\\
&\leq \mathrm{Tot.Var.}\{G\}||m_0||_{L^1}=M_1,
\end{align*}
which means $\mathrm{Tot.Var.}\{u(\cdot, t)\}\leq M_1.$  Similarly, we can obtain $\mathrm{Tot.Var.}\{u_x(\cdot, t)\}\leq 2M_1$ for $t\in[0,T_{max}).$

\emph{Step 3.} We prove \eqref{BV1} and show that $u(x,T_{max})$ satisfies \eqref{uniforBV}.

The first part of \eqref{BV1} is deduced by $u\in C(\mathbb{R}\times[0,T_{max}])$. To prove the second part, we have to do a little more job.

Combining  \eqref{uuxUbound}, step 2, and \cite[Theorem 2.3]{Bressan}, we know that there exists a consequence $\{t_k\}(\rightarrow T_{max})$ and two BV functions $\widetilde{u}(x),\widetilde{v}(x)$ such that
$$\lim_{k\rightarrow\infty}u(x,t_k)=\widetilde{u}(x),\quad\lim_{k\rightarrow\infty}u_x(x,t_k)=\widetilde{v}(x)\textrm{ for every }x\in\mathbb{R},$$
and
$$\mathrm{Tot.Var.}\{\widetilde{u}\}\leq M_1,\quad |\widetilde{u}|\leq \frac{1}{2}M_1 \textrm{ and } \mathrm{Tot.Var.}\{\widetilde{v}\}\leq 2M_1,\quad |\widetilde{v}|\leq \frac{1}{2}M_1.$$
Because
$$\lim_{t\rightarrow T_{max}}u(x,t)=u(x,T_{max})\textrm{ for every }x\in\mathbb{R},$$
we know $\widetilde{u}(x)=u(x,T_{max}).$

For any test function $\phi\in C_c^\infty(\mathbb{R})$, we have
\begin{align*}
-\int_{\mathbb{R}}u(x,T_{max})\phi_x(x)dx&=-\int_{\mathbb{R}}\widetilde{u}(x)\phi_x(x)dx=-\lim_{k\rightarrow\infty}\int_{\mathbb{R}}u(x,t_k)\phi_x(x)dx\\
&=\lim_{k\rightarrow\infty}\int_{\mathbb{R}}u_x(x,t_k)\phi(x)dx=\int_{\mathbb{R}}\widetilde{v}(x)\phi(x)dx,
\end{align*}
which means $\widetilde{v}(x)$ is the derivative of $u(x,T_{max})$ in distribution sense. Define $u_x(x,T_{max})=\widetilde{v}(x)$ for every $x\in\mathbb{R}$ and we obtain
$$\lim_{k\rightarrow\infty}u_x(x,t_k)=\widetilde{u}_x(x)=u_x(x,T_{max})\textrm{ for every }x\in\mathbb{R}.$$
Because $u_x(x,t)$ is continuous in $[0,T_{max})$,  we know  
$$\lim_{t\rightarrow T_{max}}u_x(x,t)=\widetilde{u}_x(x)=u_x(x,T_{max})\textrm{ for every }x\in\mathbb{R}.$$

This is the end of the proof.

\end{proof}

Next we give a theorem to prove that $m(\cdot ,t)$ has a unique limit in Radon measure space $\mathcal{M}(\mathbb{R})$ as $t$ approaching $T_{max}$. Before this, let's recall the definition $A^+_t$ and $A^-_t$ in Remark \ref{positivenegative} and denote
$$A_{T_{max}}^+:=\{X(\xi,T_{max})\in\mathbb{R}:\xi\in A^+\},\quad A_{T_{max}}^-:=\{X(\xi,T_{max})\in\mathbb{R}:\xi\in A^-\}.$$
Because $X(\xi,T_{max})$ may not be strictly monotonic, it is not obvious to see that $A_{T_{max}}^+$ and $A_{T_{max}}^-$ are open sets. We give a lemma to show this.
\begin{lemma}\label{openset}
$A_{T_{max}}^+$ and $A_{T_{max}}^-$ are open sets.
\end{lemma}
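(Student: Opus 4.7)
The plan is to reduce the problem to connected components of $A^+$ and exploit the fact that $X_\xi\equiv 1$ on $A^0$ from Remark \ref{positivenegative}, which prevents $X(\cdot,T_{max})$ from collapsing at the boundary of any component.

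Since $m_0\in C_c^1(-L,L)$ is continuous, $A^+=\{m_0>0\}$ is open in $(-L,L)$, so I can decompose it into a countable disjoint union of open intervals $A^+=\bigsqcup_i(a_i,b_i)$. Then $A^+_{T_{max}}=\bigcup_i X((a_i,b_i),T_{max})$, so it suffices to show that each $X((a_i,b_i),T_{max})$ is an open interval. For a single component $(a,b)$, the endpoints satisfy $m_0(a)=m_0(b)=0$: either they are interior points of $A^0$, or they coincide with $\pm L$ at which $m_0$ vanishes by compact support. In either case, Remark \ref{positivenegative} (together with the extension $X_\xi\equiv 1$ outside $[-L,L]$ noted after Proposition \ref{noregularityresults}) gives $X_\xi(a,t)=X_\xi(b,t)=1$ for every $t\in[0,T_{max}]$.

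Because $X\in C_1^{k+1}([-L,L]\times[0,T_{max}])$ by Lemma \ref{XTmaxlemma}, $X_\xi$ is continuous in $\xi$, so there exists $\delta>0$ with $X_\xi(\xi,T_{max})\geq 1/2$ on $[a,a+\delta]\cup[b-\delta,b]$. Hence $X(\cdot,T_{max})$ is strictly increasing on these one-sided neighborhoods, which together with the global monotonicity $X_\xi(\cdot,T_{max})\geq 0$ gives
\begin{equation*}
X(a,T_{max})<X(\xi,T_{max})<X(b,T_{max})\qquad\text{for every }\xi\in(a,b).
\end{equation*}
Combined with the intermediate value theorem applied to the continuous monotone function $X(\cdot,T_{max})$ on $[a,b]$, this forces
\begin{equation*}
X((a,b),T_{max})=\bigl(X(a,T_{max}),\,X(b,T_{max})\bigr),
\end{equation*}
an open interval. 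Unioning over $i$, $A^+_{T_{max}}$ is open. The argument for $A^-_{T_{max}}$ is identical, starting from $A^-=\{m_0<0\}$.

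The main obstacle is that, at the blow-up time, $X(\cdot,T_{max})$ may well be constant on large subintervals inside some $(a_i,b_i)$ because characteristics collide, so one cannot simply invoke the inverse function theorem. The essential saving feature is that any such collapse must occur strictly inside a component of $A^+$: its boundary points lie in $A^0$ (or on $\{\pm L\}$ where $m_0=0$), and on $A^0$ the Lagrange flow satisfies $X_\xi\equiv 1$ for all time. This single identity is what keeps the images $X(a_i,T_{max})$ and $X(b_i,T_{max})$ separated and not attained on $(a_i,b_i)$, converting the image of an open interval under a merely monotone non-decreasing continuous map into a genuinely open interval.
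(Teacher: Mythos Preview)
Your proof is correct and rests on the same key observation as the paper---namely that $X_\xi(\xi,T_{max})=1$ whenever $m_0(\xi)=0$ (Remark \ref{positivenegative}), so characteristics cannot collapse at the boundary of a component of $A^+$---but the organization is different. The paper argues pointwise: given $x_0\in A_{T_{max}}^+$, it locates the extremal labels mapping to $x_0$, splits into the cases where the preimage is a single point or a nondegenerate interval, and in each case exhibits an open subinterval of $A_{T_{max}}^+$ containing $x_0$. Your argument is global: you decompose $A^+$ into its connected components $(a_i,b_i)$, use $m_0(a_i)=m_0(b_i)=0$ and hence $X_\xi(a_i,T_{max})=X_\xi(b_i,T_{max})=1$ to conclude that each component maps onto the open interval $(X(a_i,T_{max}),X(b_i,T_{max}))$, and take the union. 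Your route is shorter and sidesteps the case analysis entirely, since the paper's Case 2 (collapsed interval) is automatically absorbed into the interior of some component. One small wording issue: the endpoints $a_i,b_i$ need not be \emph{interior} points of $A^0$ (they could be isolated zeros separating $A^+$ from $A^-$), but this does not matter because Remark \ref{positivenegative} gives $X_\xi\equiv 1$ on all of $A^0$, and \eqref{Xxixit} directly yields $X_\xi(\pm L,t)=1$ since $m_0(\pm L)=0$.
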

\begin{proof}
We only deals with $A_{T_{max}}^+$ and the proof for $A_{T_{max}}^-$ is similar.

For $x_0\in A_{T_{max}}^+$, there exist $\xi\in(-L,L)$ such that $m_0(\xi)>0$ and $x_0=X(\xi,T_{max}).$ Set
$$\xi_1:=\min\{\xi\in[-L,L]:m_0(\xi)\geq0~\textrm{ and }~X(\xi,T_{max})=x_0\}$$
and
$$\xi_2:=\max\{\xi\in[-L,L]:m_0(\xi)\geq0~\textrm{ and }~X(\xi,T_{max})=x_0\}.$$
By continuity of $m_0$ and $X(\xi,T_{max})$, $\xi_1$ and $\xi_2$ can be obtained.

1.
If $\xi_1=\xi_2$, then there is only one point $\xi_0=\xi_1$ such that $m_0(\xi_0)>0$ and $x_0=X(\xi_0,T_{max}).$ In this case, set
$$\eta_1:=\max\{\xi:m_0(\xi)=0~\textrm{ and }~\xi<\xi_0\}$$
and
$$\eta_2:=\min\{\xi:m_0(\xi)=0~\textrm{ and }~\xi>\xi_0\}.$$
Because $m_0(\xi_0)>0$, we know $\eta_1<\xi_0<\eta_2$ and $m_0(\xi)>0$ for $\xi\in(\eta_1,\eta_2)$. Hence
$$X(\xi,T_{max})\in A_{T_{max}}^+,~\textrm{ for }~\xi\in(\eta_1,\eta_2).$$
Because $X(\xi,T_{max})$ is nondecreasing, we obtain
$$x_0=X(\xi_0,T_{max})\in(X(\eta_1,T_{max}),X(\eta_2,T_{max}))\subset A_{T_{max}}^+.$$

2.
If $\xi_1<\xi_2$, we have
\begin{align}\label{xi1xi2}
X(\xi,T_{max})\equiv x_0,~\textrm{ for }~\xi\in[\xi_1,\xi_2].
\end{align}
By definition we know $m_0(\xi_i)\geq0$ for $i=1,2$. When $m_0(\xi_i)=0$ for $i=1$ or $i=2$, from Remark \ref{positivenegative} we know $X_\xi(\xi_i, T_{max})=1$. This implies that $X(\xi,T_{max})$ is strictly monotonic in a neighborhood of $\xi_i$ which is a contradiction with \eqref{xi1xi2}. Hence, we have
$$m_0(\xi_i)>0,~\textrm{ for }~i=1,2.$$
Hence, there exist $\xi_3<\xi_1$ and $\xi_4>\xi_2$ such that
$$m_0(\xi)>0~\textrm{ for }~\xi\in(\xi_3,\xi_4).$$
Therefore, we have
$$X(\xi_3,T_{max})<X(\xi_1,T_{max})=x_0=X(\xi_2,T_{max})<X(\xi_4,T_{max}),$$
and
$$X(\xi,T_{max})\in A_{T_{max}}^+~\textrm{ for }~\xi\in (\xi_3,\xi_4),$$
which imply
$$x_0=X(\xi_1,T_{max})\in(X(\xi_3,T_{max}),X(\xi_4,T_{max}))\subset A_{T_{max}}^+.$$
\end{proof}

For any Radon measure $\mu$ and measurable set $A$, we use $\mu|_A$ to stand for the restriction of $\mu$ on the set $A$. We have the following Theorem.
\begin{theorem}\label{RadonmeasureatT}
Let the assumptions in Theorem \ref{uniform bv} holds.
Then there exists a unique Radon measure $m(\cdot ,T_{max})$ such that
\begin{align}\label{convergenceradonmeasure}
m(\cdot ,t)\stackrel{\ast}{\rightharpoonup} m(\cdot ,T_{max})~\textrm{ in }~\mathcal{M}(\mathbb{R}), ~\textrm{ as }~t\rightarrow T_{max}.
\end{align}
Moreover, $m(\cdot ,T_{max})$ has the following properties:\\
$\mathrm{(i)}$
Compact support:
\begin{equation}\label{supportof mTmax}
\mathrm{supp}\{m(\cdot ,T_{max})\}\subset(-L,L).
\end{equation}
$\mathrm{(ii)}$
Denote
$$m^+_{T_{max}}:=m(\cdot ,T_{max})\Big|_{A_{T_{max}}^+}~\textrm{ and }~m^-_{T_{max}}:=m(\cdot ,T_{max})\Big|_{A_{T_{max}}^-}.$$
Then $m^+_{T_{max}}$ is a positive Radon measure and $m^-_{T_{max}}$ is a negative Radon measure. Besides, we have
\begin{align}\label{decomposition of m}
m(\cdot ,T_{max})=m^+_{T_{max}}+m^-_{T_{max}}.
\end{align}
$\mathrm{(iii)}$
 The following equality holds:
\begin{align}\label{keepmass}
\int_{\mathbb{R}}|m|(dx,T_{max})=\int_{\mathbb{R}}|m(x,t)|dx=\int_{-L}^L|m_0(x)|dx,~~t\in[0,T_{max}).
\end{align}

\end{theorem}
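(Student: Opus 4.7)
The plan is to define $m(\cdot,T_{max})$ explicitly as the push-forward of the signed measure $m_0(\theta)\,d\theta$ on $[-L,L]$ under $\theta\mapsto X(\theta,T_{max})$, and then verify each asserted property. By Lemma \ref{XTmaxlemma} the map $X$ is continuous on $[-L,L]\times[0,T_{max}]$, so $X(\theta,t)\to X(\theta,T_{max})$ uniformly in $\theta$ as $t\to T_{max}$. For every $\phi\in C_c(\mathbb{R})$, the identity
\begin{equation*}
\int_{\mathbb{R}}\phi(x)\,m(x,t)\,dx=\int_{-L}^{L}m_0(\theta)\,\phi(X(\theta,t))\,d\theta
\end{equation*}
(which follows from \eqref{eq:umdef}) passes to the limit by dominated convergence, and produces a bounded linear functional on $C_c(\mathbb{R})$ of norm $\le\|m_0\|_{L^1}$. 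The Riesz representation theorem then gives a unique Radon measure $m(\cdot,T_{max})$ realizing this functional, and \eqref{convergenceradonmeasure} is immediate; uniqueness follows from uniqueness of weak-$*$ limits.

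For the support claim (i), I would use that $\su\{m_0\}$ is a compact subset of $(-L,L)$, so $m_0\equiv 0$ on $[-L,-L+\delta]\cup[L-\delta,L]$ for some $\delta>0$. By Remark \ref{positivenegative} (extended to $t=T_{max}$ via the continuity $X\in C_1^2$ from Lemma \ref{XTmaxlemma}), $X_\xi(\xi,T_{max})\equiv 1$ on these two intervals. Combined with $X(\pm L,t)\equiv\pm L$, this forces $X(\xi,T_{max})=\xi$ on $[-L,-L+\delta]\cup[L-\delta,L]$, so the push-forward sits in $[-L+\delta,L-\delta]\subset(-L,L)$.

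The decomposition (ii) is the heart of the proof, and the key technical point is showing $A_{T_{max}}^+\cap A_{T_{max}}^-=\emptyset$. If $\xi_1\in A^+$ and $\xi_2\in A^-$ with $\xi_1<\xi_2$, continuity of $m_0$ yields $\xi_0\in A^0\cap(\xi_1,\xi_2)$; Remark \ref{positivenegative} gives $X_\xi(\xi_0,T_{max})=1$, and continuity of $X_\xi$ then implies $X(\cdot,T_{max})$ is strictly increasing in a neighborhood of $\xi_0$, which together with monotonicity forces $X(\xi_1,T_{max})<X(\xi_0,T_{max})<X(\xi_2,T_{max})$. This disjointness, together with Lemma \ref{openset}, makes the restrictions $m_{T_{max}}^{\pm}$ well-defined. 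Testing against a non-negative $\phi\in C_c(A_{T_{max}}^+)$, only the $A^+$-slice contributes to $\int m_0(\theta)\phi(X(\theta,T_{max}))\,d\theta$: the $A^0$-slice vanishes because $m_0=0$ there, and the $A^-$-slice vanishes because $X(A^-,T_{max})\subset A_{T_{max}}^-$ is disjoint from $\su\phi$. Hence $m_{T_{max}}^+\geq 0$ and symmetrically $m_{T_{max}}^-\leq 0$. Since the push-forward is concentrated on $X(A^+\cup A^-,T_{max})=A_{T_{max}}^+\cup A_{T_{max}}^-$ (the $A^0$-contribution being null), \eqref{decomposition of m} follows.

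The mass equality (iii) is then a direct bookkeeping consequence of the push-forward description: for $t=T_{max}$,
\begin{equation*}
\int_{\mathbb{R}}|m|(dx,T_{max})=\int_{A^+}m_0(\theta)\,d\theta-\int_{A^-}m_0(\theta)\,d\theta=\int_{-L}^{L}|m_0(\theta)|\,d\theta,
\end{equation*}
and for $t<T_{max}$ the classical change of variable $x=X(\theta,t)$ together with \eqref{keepsign} gives the same value. The main obstacle is the disjointness argument in (ii); its success rests squarely on the structural identity $X_\xi\equiv 1$ on $A^0$ from Remark \ref{positivenegative}, which is what prevents the push-forward from merging positive and negative portions of $m_0$ in the limit.
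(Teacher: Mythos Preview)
Your proof is correct and follows essentially the same push-forward strategy as the paper: both arguments identify $m(\cdot,T_{max})$ with $X(\cdot,T_{max})\#m_0$ via the formula $\int\phi\,dm(\cdot,T_{max})=\int_{-L}^{L}m_0(\theta)\phi(X(\theta,T_{max}))\,d\theta$, and then read off (i)--(iii) from this representation.

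There is one mild difference worth noting. The paper defines $m(\cdot,T_{max})$ as $u(\cdot,T_{max})-u_{xx}(\cdot,T_{max})$, invoking the BV result of Theorem~\ref{uniform bv} to know that $u_{xx}(\cdot,T_{max})$ is a Radon measure, and only afterwards derives the push-forward formula by passing to the limit in the change of variables. You instead go straight to the push-forward via dominated convergence and Riesz representation, which is more elementary and sidesteps the BV machinery entirely. For (ii), the paper's proof is terser: it asserts $\int_{A_{T_{max}}^+}\phi\,dm(\cdot,T_{max})=\int_{A^+}m_0(\xi)\phi(X(\xi,T_{max}))\,d\xi$ ``by the definition of $A^+$'' without spelling out why no $A^-$-labels can land in $A_{T_{max}}^+$. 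Your explicit disjointness argument (using $X_\xi\equiv1$ on $A^0$ to separate the images) fills exactly this gap and is a genuine improvement in rigor.
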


\begin{proof}
\emph{Step 1.} Proof of \eqref{convergenceradonmeasure}.

Because $u_x(\cdot ,T_{max})$ is a BV function,  its derivative $u_{xx}(\cdot ,T_{max})$ is a Radon measure. We know
$$m(\cdot ,T_{max})=u(\cdot ,T_{max})-u_{xx}(\cdot ,T_{max})$$
is a Radon measure and for any test function $\phi\in C_c^\infty(\mathbb{R})$,  we have
\begin{align}\label{radon}
\int_{\mathbb{R}}\phi(x)m(dx,T_{max})=\int_{\mathbb{R}}u(x,T_{max})\phi(x)+u_x(x,T_{max})\phi_x(x)dx.
\end{align}
Then,  we have
\begin{align*}
\lim_{t\rightarrow T_{max}}\int_{\mathbb{R}}m(x,t)\phi(x)dx&=\lim_{t\rightarrow T_{max}}\int_{\mathbb{R}}u(x,t)\phi(x)+u_x(x,t)\phi_x(x)dx\\
&=\int_{\mathbb{R}}u(x,T_{max})\phi(x)+u_x(x,T_{max})\phi_x(x)dx\\
&=\int_{\mathbb{R}}\phi(x)m(dx,T_{max}).
\end{align*}
This proves \eqref{convergenceradonmeasure}.

\emph{Step 2.} Proof of (i).

For any test function $\phi\in C_c^\infty(\mathbb{R})$,  we have
\begin{align}\label{testfunctionforradon}
\int_{\mathbb{R}}\phi(x)m(dx,T_{max})&=\lim_{t\rightarrow T_{max}}\int_{\mathbb{R}}m(x,t)\phi(x)dx\nonumber\\
&=\lim_{t\rightarrow T_{max}}\int_{\mathbb{R}}m(X(\xi,t),t)\phi(X(\xi,t))X_\xi(\xi,t)d\xi\nonumber\\
&=\lim_{t\rightarrow T_{max}}\int_{-L}^Lm_0(\xi)\phi(X(\xi,t))d\xi\nonumber\\
&=\int_{-L}^Lm_0(\xi)\phi(X(\xi,T_{max}))d\xi,
\end{align}
where \eqref{keepsign} was used. Because $X(L,t)=L$ and $X(-L,t)=-L$ for $t\in[0,T_{max}]$, we have $X(\cdot ,T_{max})\in(-L,L)$. Let test function $\phi$ satisfy $\mathrm{supp}\{\phi\}\subset \mathbb{R}\setminus(-L,L).$
Then we obtain
$$\int_{\mathbb{R}}\phi(x)m(dx,T_{max})=\int_{-L}^Lm_0(\xi)\phi(X(\xi,T_{max}))d\xi=0,$$
which implies \eqref{supportof mTmax}.

\emph{Step 3.} Proof of (ii).

Due to \eqref{testfunctionforradon}, we know 
$$m(\cdot,T_{max})=X(\cdot,T_{max})\#m_0.$$
For $\phi\in C_c^\infty(\mathbb{R})$ and $\phi\geq0$, by the definition of $A^+$  we have
\begin{align*}
\int_{\mathbb{R}}\phi(x)dm_{T_{max}}^+&=\int_{A_{T_{max}}^+}\phi(x)m(dx,T_{max})\\
&=\int_{A^+}m_0(\xi)\phi(X(\xi,T_{max}))d\xi\geq0.
\end{align*}
Hence, $m_{T_{max}}^+$ is a positive Radon measure. With the same argument, we can see that $m_{T_{max}}^-$ is a negative Radon measure.

On the other hand, by using \eqref{testfunctionforradon}, we have
\begin{align*}
&\quad \int_{\mathbb{R}}\phi(x)d(m_{T_{max}}^+ + m_{T_{max}}^-)=\int_{A_{T_{max}}^+\cup A_{T_{max}}^-}\phi(x)m(dx,T_{max})\\
&=\int_{A^+\cup A^-}m_0(\xi)\phi(X(\xi,T_{max}))d\xi=\int_{A^+\cup A^-\cup A^0}m_0(\xi)\phi(X(\xi,T_{max}))d\xi\\
&=\int_{-L}^Lm_0(\xi)\phi(X(\xi,T_{max}))d\xi=\int_{\mathbb{R}}\phi(x)m(dx,T_{max}),
\end{align*}
which implies \eqref{decomposition of m}.

\emph{Step 4.} Proof of (iii).

 From \eqref{keepsign}, we have
$$|m(X(\xi,t),t)|X_\xi(\xi,t)=|m_0(\xi)|,$$
which implies
\begin{align*}
\int_{\mathbb{R}}|m(x,t)|dx=\int_{\mathbb{R}}|m(X(\xi,t),t)|X_\xi(\xi,t)d\xi=\int_{-L}^L|m_0(\xi)|d\xi~\textrm{ for }~t\in[0,T_{max}).
\end{align*}
For any test function $\phi\in C_c^\infty(\mathbb{R})$, we have
\begin{align*}
\int_{\mathbb{R}}\phi(x)|m|(dx,T_{max})&=\int_{A_{T_{max}}^+}\phi(x) m(dx,T_{max})-\int_{A_{T_{max}}^-}\phi(x)m(dx,T_{max})\\
&=\int_{A^+}m_0(\xi)\phi(X(\xi,T_{max}))d\xi-\int_{A^-}m_0(\xi)\phi(X(\xi,T_{max}))d\xi.
\end{align*}
Choose $\phi\in C_c^\infty(\mathbb{R})$ satisfying
$$\phi(x)\equiv1,\quad x\in(X(-L,T_{max}),X(L,T_{max})).$$
Hence, we have
\begin{align*}
\int_{\mathbb{R}}|m|(dx,T_{max})=\int_{A^+}m_0(\xi)d\xi-\int_{A^-}m_0(\xi)d\xi=\int_{-L}^L|m_0(\xi)|d\xi.
\end{align*}

This ends the proof.
\end{proof}

\begin{remark}\label{solutionsforrandomeasure}
In Section \ref{sec5}, we will prove the global existence of weak solutions to the mCH equation when initial data $m_0$ belongs to $\mathcal{M}(\mathbb{R})$.  Hence, we can extend $m$ globally in time after blow up time. Similar results can be found  in \cite{GaoLiu}, where a sticky particle  method was used.

\end{remark}

 Next, we introduce another two sets to study solutions at $T_{max}$.
Assume $m_0\in C^1_c(\mathbb{R})$ and $X\in C^{2}_1([-L,L]\times[0,T_{max}])$ is the solution to the Lagrange dynamics \eqref{Lagran dynamics}.
Set
$$F:=\{\xi\in(-L,L): X_\xi(\xi,T_{max})=0\}$$ and
$$O:=\{\xi\in(-L,L): X_\xi(\xi,T_{max})>0\}.$$
Then, $F$ is a closed set and $O$ is an open set. Moreover, we have
$$F\cup O=(-L,L).$$
Because the classical solution blows up in finite time $T_{max}$, we know $F$ is not empty. On the other hand, due to $m_0(\pm L)=0$, Remark \ref{positivenegative} tells that $X_\xi(\pm L, T_{max})=1$ which implies $O$ is not empty.

Set
\begin{align}\label{FTmax}
O_{T_{max}}:=\{X(\xi,T_{max}):\xi\in O\}~\textrm{ and }~F_{T_{max}}:=\{X(\xi,T_{max}):\xi\in F\}.
\end{align}
Then, we have
$$O_{T_{max}}\cup F_{T_{max}}=(X(-L,T_{max}),X(L,T_{max})).$$
$X(\cdot ,T_{max})$ is strictly monotonic in $O$. Hence, $O_{T_{max}}$ is also an open set and $F_{T_{max}}$ is a closed set. Moreover, we claim that
\begin{align}\label{eq:dense}
\overline{O_{T_{max}}}=[X(-L,T_{max}),X(L,T_{max})].
\end{align}
To show \eqref{eq:dense}, we only have to prove $F_{T_{max}}\subset\overline{O_{T_{max}}}$. For any $x\in F_{T_{max}}$, there exists $\xi_0$ such that $x=X(\xi_0,T_{max})$ and $X_\xi(\xi_0,T_{max})=0$. Let $\xi_1=\max\{\xi:X(\xi,T_{max})=x\}.$
Then there is a small constant $\delta$ such that $\xi_1+\delta<L$ and $X_\xi(\xi,T_{max})>0$ for $\xi\in(\xi_1,\xi_1+\delta)$. Hence, $X(\xi,T_{max})\in O_{T_{max}}$ for $\xi\in(\xi_1,\xi_1+\delta)$ and $\lim_{\xi\to\xi_1+}=X(\xi_1,T_{max})=x,$ which implies $x\in \overline{O_{T_{max}}}$.

%
%

We have the following theorem.

\begin{theorem}\label{continuousinotmax}
Let assumptions in Theorem \ref{uniform bv} hold. Then we have
$$u(\cdot ,T_{max})\in C^{3}(\mathbb{R}\setminus F_{T_{max}})$$
and
$$m(\cdot ,T_{max})\in C^{1}(O_{T_{max}})\cap L^1(O_{T_{max}}).$$
Moreover, the following holds
\begin{align*}
m(X(\xi,T_{max}),T_{max})X_\xi(\xi,T_{max})=m_0(\xi)~\textrm{ for }~\xi\in O.
\end{align*}
\end{theorem}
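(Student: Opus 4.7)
The strategy is to localize at each $\xi_{0}\in O$, use that $X(\cdot,T_{max})$ is a $C^{2}$-diffeomorphism there, identify $m(\cdot,T_{max})$ on $O_{T_{max}}$ as the pushforward of $m_{0}$, and then read off the $C^{3}$ regularity of $u(\cdot,T_{max})$ on $\mathbb{R}\setminus F_{T_{max}}$ either from smoothness of the kernel $G$ or from the elliptic equation $u-u_{xx}=m$.

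\textbf{Step 1 (local inversion).} Fix $\xi_{0}\in O$, so $X_{\xi}(\xi_{0},T_{max})>0$. Since $X\in C^{2}_{1}([-L,L]\times[0,T_{max}])$ by Lemma \ref{XTmaxlemma}, continuity of $X_{\xi}$ yields an open interval $I=(\xi_{0}-\delta,\xi_{0}+\delta)\subset O$ on which $X_{\xi}(\cdot,T_{max})$ is strictly positive, and $X(\cdot,T_{max})|_{I}$ is a $C^{2}$-diffeomorphism onto an open interval $V:=X(I,T_{max})\subset O_{T_{max}}$ with $C^{2}$ inverse $Z:V\to I$. Since $X(\cdot,T_{max})$ is non-decreasing (as the pointwise limit of the strictly increasing maps $X(\cdot,t)$ from \eqref{Xincrease}), any $\xi\notin I$ is sent to a point lying at or beyond an endpoint of the open interval $V$, hence outside $V$. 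Thus the preimage of $V$ in $[-L,L]$ is exactly $I$.

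\textbf{Step 2 (identification of $m$ and the identity).} Plugging a test function $\phi\in C_{c}^{\infty}(V)$ into \eqref{testfunctionforradon}, using Step 1 to restrict the $\xi$-integration to $I$, and changing variables $y=X(\xi,T_{max})$ gives
\begin{equation*}
\int_{\mathbb{R}}\phi(x)\,m(dx,T_{max})=\int_{V}\phi(y)\,\frac{m_{0}(Z(y))}{X_{\xi}(Z(y),T_{max})}\,dy,
\end{equation*}
so that on $V$ the Radon measure $m(\cdot,T_{max})$ has the $C^{1}$ density $m_{0}\circ Z/(X_{\xi}\circ Z)$. Since $\xi_{0}\in O$ was arbitrary, $m(\cdot,T_{max})\in C^{1}(O_{T_{max}})$, and setting $y=X(\xi,T_{max})$ in the density yields the stated identity $m(X(\xi,T_{max}),T_{max})\,X_{\xi}(\xi,T_{max})=m_{0}(\xi)$ for $\xi\in O$. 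Since $O$ decomposes into countably many open intervals, each mapped diffeomorphically and with disjoint images onto components of $O_{T_{max}}$, a global change of variables gives
\begin{equation*}
\int_{O_{T_{max}}}|m(y,T_{max})|\,dy=\int_{O}|m_{0}(\xi)|\,d\xi\le\|m_{0}\|_{L^{1}},
\end{equation*}
which is the claimed $L^{1}$ bound.

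\textbf{Step 3 (regularity of $u$).} I would split $\mathbb{R}\setminus F_{T_{max}}$ as $(\mathbb{R}\setminus[-L,L])\cup\{\pm L\}\cup O_{T_{max}}$. For the first two pieces, the support condition $\mathrm{supp}\,m_{0}\subset(-L,L)$ combined with Remark \ref{positivenegative} forces $X(\cdot,T_{max})$ to be the identity on neighborhoods of $\pm L$, so $x-X(\theta,T_{max})$ stays bounded away from $0$ for $\theta\in\mathrm{supp}\,m_{0}$; hence $u(x,T_{max})=\int_{-L}^{L}G(x-X(\theta,T_{max}))m_{0}(\theta)\,d\theta$ is $C^{\infty}$ in $x$ in a neighborhood of those points. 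On $O_{T_{max}}$, the distributional equation $u-u_{xx}=m(\cdot,T_{max})$ together with $u(\cdot,T_{max})\in C(\mathbb{R})$ (Theorem \ref{uniform bv}) and $m(\cdot,T_{max})\in C^{1}(O_{T_{max}})$ from Step 2 delivers $u(\cdot,T_{max})\in C^{3}(O_{T_{max}})$ by the standard one-dimensional elliptic bootstrap.

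\textbf{Main obstacle.} The subtle part is Step 1: one must verify that no $\xi$ outside $I$ contributes to the pushforward in Step 2, so that $m(\cdot,T_{max})|_{V}$ is indeed given by the clean local formula; this rests on the interplay between the global monotonicity of $X(\cdot,T_{max})$ and the strict local positivity of $X_{\xi}$ on $I$. Once this is secured, the remainder reduces to change-of-variables bookkeeping and a standard elliptic bootstrap.
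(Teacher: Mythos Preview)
Your argument is correct and takes a genuinely different route from the paper. The paper works in the opposite order: it first computes $u_x(x,T_{max})$ and $u_{xx}(x,T_{max})$ \emph{directly} from the integral representation $u(x,T_{max})=\int_{-L}^{L}G(x-X(\theta,T_{max}))m_{0}(\theta)\,d\theta$, by splitting the domain of integration at the unique $\eta$ with $X(\eta,T_{max})=x$ and exploiting the explicit sign structure $G'(y)=\mp G(y)$ for $y\gtrless 0$; this produces the closed formula $u_{xx}(x,T_{max})=u(x,T_{max})-m_{0}(\eta)/X_{\eta}(\eta,T_{max})$, from which the $C^{3}$ regularity and the identity for $m$ are read off simultaneously. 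You instead identify $m(\cdot,T_{max})$ first, via the pushforward formula \eqref{testfunctionforradon} and a local change of variables, and then recover the regularity of $u$ by the elliptic bootstrap $u_{xx}=u-m$. Both approaches rely on the same key local-inversion observation (your Step~1), and your handling of the ``no outside contributions'' issue via global monotonicity is exactly what is needed.

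What each buys: your argument is more conceptual and avoids any kernel-splitting computation, making it transparently generalizable. The paper's direct computation, however, delivers the explicit formula \eqref{uxinotmax} for $u_x(x,T_{max})$ as a by-product, which is precisely what is used in the subsequent Proposition~\ref{discontinuousset} to compute the jump $u_x(y-,T_{max})-u_x(y+,T_{max})$ at points of $\widehat{F}_{T_{max}}$. If you pursue your route, you would need to rederive that formula separately (e.g.\ by writing $u_x=G'\ast m(\cdot,T_{max})$ and passing through the pushforward again) before proving Proposition~\ref{discontinuousset}.
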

\begin{proof}
\emph{Step 1.} We first consider the cases when $x\not\in(X(-L,T_{max}),X(L,T_{max})).$

Because $m_0(L)=0$, from Remark \ref{positivenegative} we know $X_\xi(L,T_{max})=1$, which means $X(\xi,T_{max})$ is strictly monotonic in a small neighborhood of $L$. Hence,
$$X(L,T_{max})>X(\xi,T_{max})~\textrm{ for }~\xi\in[-L,L).$$
From this we know, if $x\geq X(L,T_{max})$, we have $x-X(\xi,T_{max})>0$ for $\xi\in(-L,L)$.
From Theorem \ref{uniform bv}, we know
$$u(x,T_{max})=\int_{-L}^LG(x-X(\theta,T_{max}))m_0(\theta)d\theta.$$
Thus
\begin{align*}
u_x(x,T_{max})&=\int_{-L}^LG'(x-X(\theta,T_{max}))m_0(\theta)d\theta\\
&=-\int_{-L}^LG(x-X(\theta,T_{max}))m_0(\theta)d\theta=-u(x,T_{max}).
\end{align*}
This shows
$$u(x,T_{max})=u(X(L,T_{max}),T_{max})e^{-x+X(L,T_{max})}.$$
Hence, $u(\cdot ,T_{max})\in C^\infty[X(L,T_{max}),+\infty)$.

Similarly, we can show $u(\cdot ,T_{max})\in C^\infty(-\infty,X(-L,T_{max})]$.

\emph{Step 2.} We only left the case for $x\in O_{T_{max}}$.

When $x\in O_{T_{max}}$, there exists a $\eta\in O$ such that $X(\eta,T_{max})=x$. Because $X_\xi(\eta,T_{max})>0,$ we know $\eta$ is the unique point  satisfying $X(\eta,T_{max})=x.$
Rewrite $u(x,T_{max})$ as
\begin{align*}
u(x,T_{max})=&\int_{\eta}^L G(X(\eta,T_{max})-X(\theta,T_{max}))m_0(\theta)d\theta\\
&+\int_{-L}^\eta G(X(\eta,T_{max})-X(\theta,T_{max}))m_0(\theta)d\theta.
\end{align*}
Using $X_\eta(\eta,T_{max})>0$, we can obtain
\begin{align*}
&\quad u_x(x,T_{max})=\frac{1}{X_\eta(\eta,T_{max})}u_\eta(X(\eta,T_{max}),T_{max})\\
&=\frac{1}{X_\eta(\eta,T_{max})}\bigg(\int_{\eta}^L G'(X(\eta,T_{max})-X(\theta,T_{max}))X_\eta(\eta,T_{max})m_0(\theta)d\theta\\
&-\frac{1}{2}m_0(\eta)+\frac{1}{2}m_0(\eta)+\int^{\eta}_{-L} G'(X(\eta,T_{max})-X(\theta,T_{max}))X_\eta(\eta,T_{max})m_0(\theta)d\theta\bigg).
\end{align*}
Hence,
\begin{align}\label{uxinotmax}
u_x(x,T_{max})=&\int_{\eta}^L G(X(\eta,T_{max})-X(\theta,T_{max}))m_0(\theta)d\theta\nonumber\\
&-\int_{-L}^\eta G(X(\eta,T_{max})-X(\theta,T_{max}))m_0(\theta)d\theta.
\end{align}
Taking derivative again shows that
\begin{align}\label{mtmax}
u_{xx}(x,T_{max})=&-\frac{m_0(\eta)}{2X_\eta(\eta,T_{max})}+\int_{\eta}^L G(X(\eta,T_{max})-X(\theta,T_{max}))m_0(\theta)d\theta\nonumber\\
&-\frac{m_0(\eta)}{2X_\eta(\eta,T_{max})}+\int_{-L}^\eta G(X(\eta,T_{max})-X(\theta,T_{max}))m_0(\theta)d\theta\nonumber\\
=&-\frac{m_0(\eta)}{X_\eta(\eta,T_{max})}+u(x,T_{max}).
\end{align}
Because $m_0\in C_c^1(\mathbb{R})$ and $X_\xi(\cdot ,T_{max})\in C^1(-L,L)$, which implies
$$u(\cdot ,T_{max})\in C^{3}(O_{T_{max}}).$$
Together with Step 1 and Step 2, we obtain
$$u(\cdot ,T_{max})\in C^{3}(\mathbb{R}\setminus F_{T_{max}}).$$

\emph{Step 3.} Because $\mathbb{R}\setminus F_{T_{max}}$ is an open set, for any $\phi\in C_c^\infty(\mathbb{R}\setminus F_{T_{max}})$ we have
\begin{align*}
\int_{\mathbb{R}}\phi(x)m(dx,T_{max})&=\int_{\mathbb{R}}u(x,T_{max})\phi(x)+u_x(x,T_{max})\phi_x(x)dx\\
&=\int_{\mathbb{R}\setminus F_{T_{max}}}u(x,T_{max})\phi(x)+u_x(x,T_{max})\phi_x(x)dx\\
&=\int_{\mathbb{R}\setminus F_{T_{max}}}(u(x,T_{max})-u_{xx}(x,T_{max}))\phi(x)dx,
\end{align*}
where \eqref{radon} was used. Because $\phi$ is arbitrary and $u(\cdot ,T_{max})\in C^{3}(\mathbb{R}\setminus F_{T_{max}})$, we obtain
\begin{align}\label{mrepresentation}
m(\cdot ,T_{max})=u(\cdot ,T_{max})-u_{xx}(\cdot ,T_{max})\in C^{1}(\mathbb{R}\setminus F_{T_{max}}).
\end{align}
From Theorem \ref{RadonmeasureatT}, we know $m(\cdot,T_{max})$ has compact support in $(-L,L)$. Hence,
$$m(\cdot ,T_{max})\in C^{1}(O_{T_{max}}).$$
Because the Radon measure  $m(\cdot ,T_{max})$ has finite total variation, we obtain
$$m(\cdot ,T_{max})\in L^1(O_{T_{max}}).$$

From \eqref{mtmax}, we know
$$m(x,T_{max})=\frac{m_0(\eta)}{X_\eta(\eta,T_{max})}$$
where $x\in O_{T_{max}}$ and $X(\eta,T_{max})=x.$ This means \eqref{keepsign} holds in the set $O$:
$$m(X(\xi,T_{max}),T_{max})X_\xi(\xi,T_{max})=m_0(\xi)~\textrm{ for }~\xi\in O.$$

This finishes our proof.

\end{proof}

Because $u(\cdot ,T_{max})$ and $u_x(\cdot ,T_{max})$ are BV functions, their discontinuous points are countable.
We give a proposition to show  discontinuous points of $u_x(\cdot ,T_{max})$.
First, let us introduce two subsets of $F_{T_{max}}$.
$$\widetilde{F}_{T_{max}}=\{x\in F_{T_{max}}:X^{-1}(x,T_{max})=\{\xi\} ~\textrm{ for some }~\xi\in[-L,L]\},$$
and
\begin{align}\label{FTmax2}
\widehat{F}_{T_{max}}=\{x\in F_{T_{max}}:X^{-1}(x,T_{max})=[\xi_1,\xi_2] ~\textrm{ for some }~\xi_1<\xi_2\}.
\end{align}

\begin{proposition}\label{discontinuousset}
Let the assumptions in Theorem \ref{uniform bv} hold. Then, $u_x(\cdot ,T_{max})\in C(\mathbb{R}\setminus \widehat{F}_{T_{max}})$ and $u_x(\cdot ,T_{max})$ is not continuous at $y\in \widehat{F}_{T_{max}}$.
\end{proposition}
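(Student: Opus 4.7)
The plan is to exploit the monotone structure of $X(\cdot,T_{max})$ together with the integral representation of $u_x$. Since Theorem \ref{continuousinotmax} already gives $u(\cdot,T_{max})\in C^{3}(\mathbb{R}\setminus F_{T_{max}})$, the only candidates for a discontinuity of $u_x(\cdot,T_{max})$ lie in $F_{T_{max}}=\widetilde F_{T_{max}}\cup\widehat F_{T_{max}}$. Fix $y\in F_{T_{max}}$ and write $X^{-1}(y,T_{max})=[\xi_1,\xi_2]$, with $\xi_1=\xi_2$ precisely when $y\in\widetilde F_{T_{max}}$. Because $O_{T_{max}}$ is dense in $(X(-L,T_{max}),X(L,T_{max}))$ by \eqref{eq:dense}, and $u_x(\cdot,T_{max})\in BV(\mathbb{R})$ admits one-sided limits at every point, I will compute these limits along sequences $x_n\to y^{\pm}$ chosen inside $O_{T_{max}}$.

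For such $x_n\in O_{T_{max}}$, the unique preimage $\eta_n$ satisfies $x_n=X(\eta_n,T_{max})$, and continuity together with monotonicity of $X(\cdot,T_{max})$ forces $\eta_n\to\xi_2$ as $x_n\to y^+$ and $\eta_n\to\xi_1$ as $x_n\to y^-$. Substituting into the representation \eqref{uxinotmax} and applying dominated convergence (the integrand is bounded by $\tfrac12\|m_0\|_{L^\infty}$) yields
\begin{align*}
\lim_{x\to y^+}u_x(x,T_{max}) &= \int_{\xi_2}^L G(y-X(\theta,T_{max}))m_0(\theta)\,d\theta - \int_{-L}^{\xi_2}G(y-X(\theta,T_{max}))m_0(\theta)\,d\theta,\\
\lim_{x\to y^-}u_x(x,T_{max}) &= \int_{\xi_1}^L G(y-X(\theta,T_{max}))m_0(\theta)\,d\theta - \int_{-L}^{\xi_1}G(y-X(\theta,T_{max}))m_0(\theta)\,d\theta.
\end{align*}
Subtracting and using $X(\theta,T_{max})\equiv y$ on $[\xi_1,\xi_2]$ together with $G(0)=\tfrac12$ produces the jump identity
$$\lim_{x\to y^+}u_x(x,T_{max})-\lim_{x\to y^-}u_x(x,T_{max}) \;=\; -\int_{\xi_1}^{\xi_2}m_0(\theta)\,d\theta.$$

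If $y\in\widetilde F_{T_{max}}$ then $\xi_1=\xi_2$, the jump vanishes, and the same splitting shows that $u_x(y,T_{max})$ itself coincides with the common one-sided limit; combined with Theorem \ref{continuousinotmax} this gives $u_x(\cdot,T_{max})\in C(\mathbb{R}\setminus\widehat F_{T_{max}})$. For $y\in\widehat F_{T_{max}}$ the remaining and main obstacle is to show $\int_{\xi_1}^{\xi_2}m_0\,d\theta\neq 0$. I will argue that $m_0$ cannot vanish anywhere on $[\xi_1,\xi_2]$: by Remark \ref{positivenegative}, one has $X_\xi(\xi,T_{max})=1$ whenever $m_0(\xi)=0$, whereas $X(\cdot,T_{max})\equiv y$ on the nondegenerate interval $[\xi_1,\xi_2]$ together with $X\in C^{k+1}_1([-L,L]\times[0,T_{max}])$ from Lemma \ref{XTmaxlemma} forces $X_\xi\equiv 0$ throughout $[\xi_1,\xi_2]$, a contradiction. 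Continuity of $m_0$ then yields a single (nonzero) sign on $[\xi_1,\xi_2]$, whence the integral is nonzero and $u_x(\cdot,T_{max})$ has a genuine jump at $y$.
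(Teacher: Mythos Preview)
Your proof is correct and follows essentially the same route as the paper's: approach $y$ from either side through sequences in the dense set $O_{T_{max}}$, invoke the representation \eqref{uxinotmax} to compute the one-sided limits, and use Remark~\ref{positivenegative} to rule out zeros of $m_0$ on $[\xi_1,\xi_2]$, which forces a nonzero jump exactly at points of $\widehat F_{T_{max}}$. The only cosmetic difference is that you package the one-sided limit argument through the BV structure of $u_x(\cdot,T_{max})$, whereas the paper writes out the two sequences explicitly; the content is the same.
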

\begin{proof}
\emph{Step 1.} Assume $y\in \widetilde{F}_{T_{max}}$ and we prove $u_x(\cdot ,T_{max})$ is continuous at $y$.

By definition of $F_{T_{max}}$, we know there is only one point $\xi_0\in F$, such that $X(\xi_0,T_{max})=y$. Due to \eqref{eq:dense}, there exist two sequence $\{\overline{y}_n\}$ and $\{\widehat{y}_n\}$ such that the following hold:
$$\{\overline{y}_n\}\subset O_{T_{max}},\quad \lim_{n\rightarrow +\infty}\overline{y}_n=y,\quad \overline{y}_n\textrm{ is increasing}$$
and
$$\{\widehat{y}_n\}\subset O_{T_{max}},\quad \lim_{n\rightarrow +\infty}\widehat{y}_n=y,\quad \widehat{y}_n\textrm{ is decreasing.}$$
Because $\overline{y}_n\in O_{T_{max}}$, there is a unique  $\overline{\xi}_n\in O$ such that $X(\overline{\xi}_n,T_{max})=\overline{y}_n$. Similarly, we have a unique $\widehat{\xi}_n\in O$ such that $X(\widehat{\xi}_n,T_{max})=\widehat{y}_n.$  (Uniqueness is because $X(\xi,T_{max})$ is strictly monotonic in $O$.)   Moreover, we have
$$\overline{\xi}_n<\xi_0<\widehat{\xi}_n,$$
and
$$\lim_{n\rightarrow+\infty}\overline{\xi}_n=\xi_0=\lim_{n\rightarrow +\infty}\widehat{\xi}_n.$$

Because formula \eqref{uxinotmax} holds for $x\in O_{T_{max}}$,  we know
\begin{align*}
u_x(\overline{y}_n,T_{max})=&\int_{\overline{\xi}_n}^L G(X(\overline{\xi}_n,T_{max})-X(\theta,T_{max}))m_0(\theta)d\theta\nonumber\\
&-\int_{-L}^{\overline{\xi}_n} G(X(\overline{\xi}_n,T_{max})-X(\theta,T_{max}))m_0(\theta)d\theta.
\end{align*}
Let $n$ goes to infinity and we obtain
\begin{align*}
u_x(y-,T_{max})=&\int_{\xi_0}^L G(X(\xi_0,T_{max})-X(\theta,T_{max}))m_0(\theta)d\theta\nonumber\\
&-\int_{-L}^{\xi_0} G(X(\xi_0,T_{max})-X(\theta,T_{max}))m_0(\theta)d\theta.
\end{align*}
Similarly, we have
\begin{align*}
u_x(\widehat{y}_n,T_{max})=&\int_{\widehat{\xi}_n}^L G(X(\widehat{\xi}_n,T_{max})-X(\theta,T_{max}))m_0(\theta)d\theta\nonumber\\
&-\int_{-L}^{\widehat{\xi}_n} G(X(\widehat{\xi}_n,T_{max})-X(\theta,T_{max}))m_0(\theta)d\theta.
\end{align*}
and
\begin{align*}
u_x(y+,T_{max})=&\int_{\xi_0}^L G(X(\xi_0,T_{max})-X(\theta,T_{max}))m_0(\theta)d\theta\nonumber\\
&-\int_{-L}^{\xi_0} G(X(\xi_0,T_{max})-X(\theta,T_{max}))m_0(\theta)d\theta.
\end{align*}
This implies $u_x(y-,T_{max})=u_x(y+,T_{max})$.
For any $y\in \widetilde{F}_{T_{max}}$, define
\begin{align*}
u_x(y,T_{max})=&\int_{\xi_0}^L G(X(\xi_0,T_{max})-X(\theta,T_{max}))m_0(\theta)d\theta\nonumber\\
&-\int_{-L}^{\xi_0} G(X(\xi_0,T_{max})-X(\theta,T_{max}))m_0(\theta)d\theta.
\end{align*}
Then using similar argument for any sequence $\mathbb{R}\setminus \widehat{F}_{T_{max}}\ni y_n\rightarrow y$,  we know
$$u_x(\cdot ,T_{max})\in C(\mathbb{R}\setminus \widehat{F}_{T_{max}}).$$

\emph{Step 2.} Assume $y\in \widehat{F}_{T_{max}}$ and we prove $u_x(\cdot ,T_{max})$ is discontinuous at $y$.

Set
$$\xi_1=\min\{\xi\in F:X(\xi,T_{max})=y\}~\textrm{ and }~\xi_2=\max\{\xi\in F:X(\xi,T_{max})=y\}.$$
By definition of $\widehat{F}_{T_{max}}$ we know $\xi<\xi_2$. Moreover, we know
$$X(\xi,T_{max})=y,\quad X_\xi(\xi,T_{max})=0~\textrm{ for }~\xi\in[\xi_1,\xi_2].$$
 \textbf{Claim:} $m_0$ will not change sign in $[\xi_1,\xi_2]$.

If this is not true, then we have $\eta\in [\xi_1,\xi_2]$ such that $m_0(\eta)=0$. Remark \ref{positivenegative} tells us that $X_\xi(\eta,T_{max})=1$ and we obtain a contradiction.

Similar to Step 1, we have four sequences $\overline{\overline{y}}_n$, $\overline{\overline{\xi}}_n$, $\widehat{\widehat{y}}_n$ and $\widehat{\widehat{\xi}}_n$ which satisfy
$$\lim_{n\rightarrow+\infty}\overline{\overline{y}}_n=y=\lim_{n\rightarrow+\infty}\widehat{\widehat{y}}_n,$$
$$\overline{\overline{y}}_n\in O_{T_{max}}\textrm{ increasing},\quad \widehat{\widehat{y}}_n\in O_{T_{max}}\textrm{ decreasing},$$
and
$$\lim_{n\rightarrow+\infty}\overline{\overline{\xi}}_n=\xi_1,\quad \lim_{n\rightarrow+\infty}\widehat{\widehat{\xi}}_n=\xi_2.$$
From \eqref{uxinotmax}, we know
\begin{align*}
u_x(\overline{\overline{y}}_n,T_{max})=&\int_{\overline{\overline{\xi}}_n}^L G(X(\overline{\overline{\xi}}_n,T_{max})-X(\theta,T_{max}))m_0(\theta)d\theta\nonumber\\
&-\int_{-L}^{\overline{\overline{\xi}}_n} G(X(\overline{\overline{\xi}}_n,T_{max})-X(\theta,T_{max}))m_0(\theta)d\theta.
\end{align*}
Let $n$ go to $+\infty$ and we obtain
\begin{align*}
u_x(y-,T_{max})=&\int_{\xi_1}^L G(X(\xi_1,T_{max})-X(\theta,T_{max}))m_0(\theta)d\theta\nonumber\\
&-\int_{-L}^{\xi_1} G(X(\xi_1,T_{max})-X(\theta,T_{max}))m_0(\theta)d\theta\\
=\int_{\xi_1}^L &G(y-X(\theta,T_{max}))m_0(\theta)d\theta-\int_{-L}^{\xi_1} G(y-X(\theta,T_{max}))m_0(\theta)d\theta.
\end{align*}
Similarly, we also have
\begin{align*}
u_x(y+,T_{max})=\int_{\xi_2}^L G(y-X(\theta,T_{max}))m_0(\theta)d\theta-\int_{-L}^{\xi_2} G(y-X(\theta,T_{max}))m_0(\theta)d\theta.
\end{align*}
Hence, using the above claim, we have
\begin{align}\label{peakonweight}
u_x(y-,T_{max})-u_x(y+,T_{max})&=2\int_{\xi_1}^{\xi_2}G(y-X(\theta,T_{max}))m_0(\theta)d\theta\nonumber\\
&=\int_{\xi_1}^{\xi_2}m_0(\theta)d\theta\neq0
\end{align}
which shows that $u_x(\cdot ,T_{max})$ is not continuous at $y$.

\end{proof}

Next, we prove Theorem \ref{maintheorem3}.  Let's give some  notations first.

Assume $F_{T_{max}}=\{x_i\}_{i=1}^{N_1}$  and  $x_1<x_2\ldots<x_{N_1}$. Let  $\widehat{F}_{T_{max}}=\{x_i\}_{i=1}^N$ ($N\leq N_1$).
From the proof \eqref{peakonweight}, we know that for each $1\leq i\leq N$ there exist $\xi_{i1}<\xi_{i2}$ such that
$$u_x(x_i-,T_{max})-u_x(x_i+,T_{max})=p_i$$
where
\begin{align}\label{peakonweights}
p_i=\int_{\xi_{i1}}^{\xi_{i2}}m_0(\theta)d\theta.
\end{align}

Set
\begin{align}\label{definitionofm1}
m_1(x)=\left\{
         \begin{array}{ll}
           m(x,T_{max}),\quad x\in O_{T_{max}}; \\
           0,\quad x\in \mathbb{R}\setminus O_{T_{max}}.
         \end{array}
       \right.
\end{align}

\begin{proof}[Proof of Theorem \ref{maintheorem3}]

For any text function $\phi\in C_c^\infty(\mathbb{R})$, we have
\begin{align*}
\int_{\mathbb{R}}\phi(x)m(dx,T_{max})=\int_{\mathbb{R}}u(x,T_{max})\phi(x)+u_x(x,T_{max})\phi_x(x)dx\\
=\bigg(\int_{-\infty}^{x_1}+\sum_{i=1}^{N_1-1}\int_{x_i}^{x_{i+1}}+\int^{+\infty}_{x_{N_1}}\bigg)\Big[u(x,T_{max})\phi(x)+u_x(x,T_{max})\phi_x(x)\Big]dx.
\end{align*}
Because $u_x(\cdot ,T_{max})\in C^{k+2}(\mathbb{R}\setminus F_{T_{max}})$, integration by parts leads to
\begin{align*}
&\int_{\mathbb{R}}\phi(x)m(dx,T_{max})\\
&=\bigg(\int_{-\infty}^{x_1}+\sum_{i=1}^{N_1-1}\int_{x_i}^{x_{i+1}}+\int^{+\infty}_{x_{N_1}}\bigg)\Big[\big(u(x,T_{max})-u_{xx}(x,T_{max})\big)\phi(x)\Big]dx\\
&\qquad+\sum_{i=1}^{N_1}\big(u_x(x_i-,T_{max})-u_x(x_i+,T_{max})\big)\phi(x_i)\\
&=\int_{O_{T_{max}}}m(x,T_{max})\phi(x)dx+\sum_{i=1}^{N_1}\big(u_x(x_i-,T_{max})-u_x(x_i+,T_{max})\big)\phi(x_i).
\end{align*}
Because $u_x(\cdot ,T_{max})$ is continuous at $x_i$ for $i\geq N+1$, combining \eqref{mrepresentation} and \eqref{peakonweights} gives that
\begin{align*}
\int_{\mathbb{R}}\phi(x)m(dx,T_{max})&=\int_{O_{T_{max}}}m(x,T_{max})\phi(x)dx+\sum_{i=1}^{N}\int_{\xi_{i1}}^{\xi_{i2}}m_0(\theta)d\theta\phi(x_i)\\
&=\int_{O_{T_{max}}}m(x,T_{max})\phi(x)dx+\sum_{i=1}^{N}p_i\phi(x_i)\\
&=\int_{O_{T_{max}}}m(x,T_{max})\phi(x)dx+\sum_{i=1}^{N}\int_{\mathbb{R}}p_i\delta(x-x_i)\phi(x)dx\\
&=\int_{\mathbb{R}}\bigg(m_1(x)+\sum_{i=1}^Np_i\delta(x-x_i)\bigg)\phi(x)dx.
\end{align*}

\end{proof}
This theorem tells us that peakons are exactly the points in the set $\widehat{F}_{T_{max}}$. Hence, a peakon is formulated when some Lagrangian labels in a interval $[\xi_1,\xi_2]$ aggregate into one point at $T_{max}$ and the weight of the peakon is the integration of $m_0(x)$ on $[\xi_1,\xi_2]$.

\section{Solutions after blow-up.}\label{sec5}
At the blow up time, the solution to the mCH equation $m$ becomes a Radon measure. In this section, we assume initial data $m_0$ belongs to the Radon measure space $\mathcal{M}(\mathbb{R})$ and use the Lagrange dynamics to prove that weak solution to \eqref{mCH}-\eqref{initial m} exists  globally in Radon measure space.

\subsection{Regularized Lagrange dynamics and BV estimate.}
Let $m_0\in\mathcal{M}(\mathbb{R})$ satisfies
\begin{align}\label{initialRadon}
\mathrm{supp}\{m_0\}\subset(-L,L)~\textrm{ and }~M_1:=|m_0|(\mathbb{R})<+\infty.
\end{align}
$G'$ is not continuous and may not be integrable with respect to Radon measure $m_0$. \eqref{Lagran dynamics} can not be used directly.  Hence, a regularization is needed.

Let's give the definition of mollifier.

\begin{definition}\label{defmollifier}
$\mathrm{(i)}$ Define the mollifier $0\leq\rho\in C^k(\mathbb{R})$, $k\geq2$ satisfying
\begin{align*}
\int_{\mathbb{R}}\rho(x)dx=1,\quad \rho(x)=\rho(|x|) ~\textrm{ and }~\mathrm{supp}\{\rho\}\subset\{x\in\mathbb{R}:|x|<1\}.
\end{align*}
$\mathrm{(ii)}$ For each $\epsilon>0$, set
$$\rho_\epsilon(x):=\frac{1}{\epsilon}\rho(\frac{x}{\epsilon}).$$
\end{definition}
With this definition, we define
$$G^\epsilon (x):=(\rho_\epsilon\ast G)(x).$$
Hence, $G^\epsilon\in C^k(\mathbb{R})$ for $k\geq2$. By Young's inequality we have
\begin{align}\label{Gproperties1}
||G^\epsilon||_{L^\infty}\leq ||G||_{L^\infty}=\frac{1}{2}, \quad ||G_x^\epsilon||_{L^\infty}\leq ||G_x||_{L^\infty}=\frac{1}{2}
\end{align}
and
\begin{align*}
||G^\epsilon||_{L^1}\leq ||G||_{L^1}=1, \quad ||G_x^\epsilon||_{L^1}\leq ||G_x||_{L^\infty}=1.
\end{align*}

Because $G_{xx}(x)=G(x)$ when $x\neq0$, we have
$$|G^\epsilon_{xx}(x)|=\bigg|\int_{\mathbb{R}}\rho_\epsilon(y)G_{xx}(x-y)dy\bigg|=\bigg|\int_{\mathbb{R}}\rho_\epsilon(y)G(x-y)dy\bigg|\leq \frac{1}{2},~\textrm{ for }~|x|>\epsilon.$$
 On the other hand, because $G^\epsilon_{xx}\in C[-\epsilon,\epsilon]$, there is a constant $\ell^\epsilon>0$ such that
$$G^\epsilon_{xx}(x)\leq \ell^\epsilon~\textrm{ for }~x\in [-\epsilon,\epsilon].$$
Hence, $G^\epsilon_x(x)$ is a global Lipschitz function.
For any measurable function $X(\xi,t)$, we define
\begin{align*}
U_\epsilon(x;X):=\bigg(\int_{-L}^LG^\epsilon(x-X(\theta,t))dm_0(\theta)\bigg)^2-\bigg(\int_{-L}^LG_x^\epsilon(x-X(\theta,t))dm_0(\theta)\bigg)^2
\end{align*}
and
\begin{align*}
U^\epsilon(x;X):=[\rho_\epsilon\ast U_\epsilon](x;X).
\end{align*}
The regularized Lagrange dynamics is given by
\begin{align*}
\left\{
  \begin{array}{ll}
    \dot{X}(\xi,t)=U^\epsilon(X(\xi,t);X), \\
    \displaystyle{X(\xi,0)=\xi\in [-L,L].}
  \end{array}
\right.
\end{align*}
Consider this equation in the Banach space $C[-L,L]$ with $\sup$ norm. One can easily show that the vector field is globally Lipschitz. Hence, by the Picard theorem for ODEs in a Banach space, we obtain a unique global solution
$$X^\epsilon(\xi,t)\in C([-L,L]\times[0,+\infty))~\textrm{ for any }~\epsilon>0.$$

Define
\begin{align}\label{umepsilon}
u^\epsilon(x,t):=\int_{-L}^LG^\epsilon(x-X^\epsilon(\theta,t))dm_0(\theta),~~m^\epsilon(x,t):=u^\epsilon(x,t)-u^\epsilon_{xx}(x,t)
\end{align}
and
\begin{align}\label{mepsilon}
m_\epsilon(\cdot,t):=X^\epsilon(\cdot,t)\# m_0(\cdot).
\end{align}
By the definition, we have
\begin{align*}
u^\epsilon(x,t)=\int_{-L}^LG^\epsilon(x-X^\epsilon(\theta,t))dm_0(\theta)=\int_{\mathbb{R}}G^\epsilon(x-y)m_\epsilon(dy,t).
\end{align*}
Hence, we have the following relation between $m^\epsilon$ and $m_\epsilon$
\begin{align}\label{relationm}
m^\epsilon(x,t)=(1-\partial_{xx})\int_{\mathbb{R}}G^\epsilon(x-y)m_\epsilon(dy,t)=\int_{\mathbb{R}}\rho_\epsilon(x-y)m_\epsilon(dy,t).
\end{align}
In the following of this paper, we  denote
$$U_\epsilon(x,t):=(u^\epsilon)^2(x,t)-(u_x^\epsilon)^2(x,t)~\textrm{ and }~U^\epsilon(x,t):=[\rho_\epsilon\ast U_\epsilon](x,t).$$
Hence, we have
\begin{align}\label{lagrange2}
\dot{X}^\epsilon(\xi,t)=U^\epsilon(X^\epsilon(\xi,t),t).
\end{align}

From Definition \ref{BV} we can easily obtain
\begin{align*}
\mathrm{Tot.Var.}\{G\}=1,\quad \mathrm{Tot.Var.}\{G_x\}=2
\end{align*}
and
\begin{align}\label{Gproperties3}
\mathrm{Tot.Var.}\{G^\epsilon\}=1,\quad \mathrm{Tot.Var.}\{G_x^\epsilon\}=2.
\end{align}

We have the following Lemma about $u^\epsilon$.
\begin{lemma}\label{uepsilonlemma}
Let $m_0\in \mathcal{M}(\mathbb{R})$ satisfy \eqref{initialRadon}. For $\epsilon>0$, $u^\epsilon(x,t)$ is defined by \eqref{umepsilon}. Then, the following statements hold:\\
$\mathrm{(i)}$
$$||u^\epsilon||_{L^\infty}\leq\frac{1}{2}M_1~\textrm{ and }~||u_x^\epsilon||_{L^\infty}\leq\frac{1}{2}M_1~\textrm{ uniformly in }~\epsilon.$$
$\mathrm{(ii)}$
$$\mathrm{Tot.Var.}\{u^\epsilon(\cdot,t)\}\leq M_1~\textrm{ and }~\mathrm{Tot.Var.}\{u^\epsilon(\cdot,t)\}\leq 2M_1~\textrm{ uniformly in }~\epsilon.$$
$\mathrm{(iii)}$
For any $t,~s\in [0,\infty)$, we have
$$\int_\mathbb{R}|u^\epsilon(x,t)-u^\epsilon(x,s)|dx\leq \frac{1}{2}M_1^3|t-s|~\textrm{ and }~\int_\mathbb{R}|u_x^\epsilon(x,t)-u_x^\epsilon(x,s)|dx\leq M_1^3|t-s|.$$

Moreover, for any $T>0$, there exist  subsequences of $u^\epsilon$, $u_x^\epsilon$ (also denoted as $u^\epsilon$, $u_x^\epsilon$) and two functions $u,~u_x\in BV(\mathbb{R}\times[0,T))$ such that
\begin{align*}
u^\epsilon\rightarrow u,~~u_x^\epsilon\rightarrow u_x~\textrm{ in }~L_{loc}^1(\mathbb{R}\times[0,+\infty))~\textrm{ as }~\epsilon\rightarrow0
\end{align*}
and  $u$, $u_x$ satisfy all the properties in  $\mathrm{(i)}$, $\mathrm{(ii)}$ and $\mathrm{(iii)}$.
\end{lemma}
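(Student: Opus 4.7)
The three bounds (i)--(iii) are pointwise estimates that follow by transporting known bounds on $G^\epsilon$ through the representation \eqref{umepsilon}. For (i), I would apply the triangle inequality directly: since $\|G^\epsilon\|_{L^\infty},\|G^\epsilon_x\|_{L^\infty}\le 1/2$ by \eqref{Gproperties1}, we get $|u^\epsilon(x,t)|\le \tfrac12|m_0|(\mathbb{R})=\tfrac12 M_1$ and likewise for $u^\epsilon_x$, with the bound independent of $\epsilon$. For (ii), I would mimic the BV argument in Step 2 of the proof of Theorem~\ref{uniform bv}: for any partition $x_1<x_2<\cdots$, swap the sum and the $m_0$-integral, apply \eqref{Gproperties3} pointwise in $\theta$ to get $\sum_i|u^\epsilon(x_i,t)-u^\epsilon(x_{i-1},t)|\le \mathrm{Tot.Var.}\{G^\epsilon\}\cdot|m_0|(\mathbb{R})=M_1$, and similarly $\mathrm{Tot.Var.}\{u^\epsilon_x(\cdot,t)\}\le 2M_1$.

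For (iii), the key intermediate step is a uniform-in-$\epsilon$ temporal Lipschitz estimate on the characteristic $X^\epsilon$. From \eqref{lagrange2} and the bound $|U^\epsilon|\le\|U_\epsilon\|_{L^\infty}\le \tfrac12 M_1^2$ (obtained in part (i) combined with Young's inequality for $\rho_\epsilon\ast$), we have $|X^\epsilon(\theta,t)-X^\epsilon(\theta,s)|\le \tfrac12 M_1^2|t-s|$ for every $\theta$. Writing $G^\epsilon(x-a)-G^\epsilon(x-b)=-\int_b^a G^\epsilon_x(x-\tau)\,d\tau$ and Fubini then give $\int_\mathbb{R}|G^\epsilon(x-a)-G^\epsilon(x-b)|\,dx\le\|G^\epsilon_x\|_{L^1}\,|a-b|\le|a-b|$. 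Inserting this into
\[
\int_\mathbb{R}|u^\epsilon(x,t)-u^\epsilon(x,s)|\,dx\le\int_{-L}^L\!\!\int_\mathbb{R}|G^\epsilon(x-X^\epsilon(\theta,t))-G^\epsilon(x-X^\epsilon(\theta,s))|\,dx\,d|m_0|(\theta)
\]
yields the claimed $\tfrac12 M_1^3|t-s|$ bound. For $u^\epsilon_x$, the analogous identity $G^\epsilon_x(x-a)-G^\epsilon_x(x-b)=-\int_b^a G^\epsilon_{xx}(x-\tau)\,d\tau$ is used, together with the bound $\|G^\epsilon_{xx}\|_{L^1}\le\mathrm{Tot.Var.}\{G^\epsilon_x\}\le 2$ (equivalently, $G^\epsilon_{xx}=G^\epsilon-\rho_\epsilon$, both with $L^1$-norm at most $1$), giving $M_1^3|t-s|$.

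For the convergence claim, the uniform spatial BV bound from (ii) combined with the uniform $L^\infty$ bound from (i) gives, for each fixed $T>0$ and each compact $K\subset\mathbb{R}$, that $\{u^\epsilon(\cdot,t)\}_\epsilon$ is bounded in $BV(K)$ uniformly in $t\in[0,T]$. Combined with the $L^1$-in-$x$ equicontinuity in $t$ from (iii), a diagonal/Helly extraction (or Kolmogorov--Riesz applied slice-by-slice followed by a further diagonal) produces a subsequence $u^{\epsilon_n}\to u$ in $L^1_{loc}(\mathbb{R}\times[0,T))$, and likewise $u^{\epsilon_n}_x\to v$. Identifying $v=u_x$ distributionally is immediate by passing to the limit in $\int u^{\epsilon_n}\phi_x\,dx\,dt=-\int u^{\epsilon_n}_x\phi\,dx\,dt$ for $\phi\in C_c^\infty$. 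Finally, the uniform bounds (i)--(iii) pass to the limit by Fatou/lower semicontinuity of total variation and of the spatial $L^1$-norm in $t$, so the limits $u,u_x$ inherit all the stated estimates. The only mildly delicate point in the whole scheme is the compactness step, where one must combine spatial-BV with $L^1$-time-Lipschitz (rather than appealing to a single two-dimensional BV compactness result) to guarantee convergence in $L^1_{loc}$ of both $u^\epsilon$ and $u^\epsilon_x$ along a common subsequence.
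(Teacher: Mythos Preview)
Your proposal is correct and follows essentially the same approach as the paper: parts (i)--(ii) are handled identically, and for (iii) both you and the paper combine the uniform speed bound $|\dot X^\epsilon|\le\tfrac12 M_1^2$ with a translate-in-$L^1$ estimate for $G^\epsilon$ and $G^\epsilon_x$. The only cosmetic difference is that where the paper cites \cite[Lemma~2.3, Theorems~2.4 and 2.6]{Bressan} for the translate bound and the Helly-type compactness, you spell these out directly (using $G^\epsilon_{xx}=G^\epsilon-\rho_\epsilon$ for the $u_x^\epsilon$ estimate and a diagonal extraction for the limit); this is the same argument made explicit rather than a different route.
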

\begin{proof}
(i) From \eqref{Gproperties1} and the definition of $u^\epsilon$, we can easily obtain (i).

(ii)
For any $\{x_i\}\subset\mathbb{R}$, $x_i<x_{i+1}$, \eqref{Gproperties3} yields
\begin{align*}
\sum_i|u^\epsilon(x_i,t)-u^\epsilon(x_{i-1},t)|\leq &\int_{-L}^L \sum_i|G^\epsilon(x_i-X^\epsilon(\theta,t))-G^\epsilon(x_{i-1}-X^\epsilon(\theta,t))|dm_0(\theta)\\
\leq &\mathrm{Tot.Var.}\{G^\epsilon\}M_1=M_1.
\end{align*}
Hence, $\mathrm{Tot.Var.}\{u^\epsilon(\cdot,t)\}\leq M_1$. Similarly, we can obtain $\mathrm{Tot.Var.}\{u_x^\epsilon(\cdot,t)\}\leq 2M_1$.

(iii)
\begin{align*}
\int_\mathbb{R}|u^\epsilon(x,t)-u^\epsilon(x,s)|dx\leq \int_{\mathbb{R}}\int_{-L}^L |G^\epsilon(x-X^\epsilon(\theta,t))-G^\epsilon(x-X^\epsilon(\theta,s))|dm_0(\theta)dx.
\end{align*}
By the definition of $U^\epsilon$ and \eqref{lagrange2}, we know
$$|\dot{X}^\epsilon(\xi,t)|\leq \frac{1}{2}M_1^2.$$
Hence,
$$|X^\epsilon(\theta,t)-X^\epsilon(\theta,s)|\leq\frac{1}{2}M_1^2|t-s|.$$
 \cite[Lemma 2.3]{Bressan} gives
\begin{align*}
\int_{\mathbb{R}}|G^\epsilon(x-X^\epsilon(\theta,t))-G^\epsilon(x-X^\epsilon(\theta,s))|dx\leq \mathrm{Tot.Var.}\{G^\epsilon\}|X^\epsilon(\theta,t)-X^\epsilon(\theta,s)|\leq\frac{1}{2}M_1^2|t-s|.
\end{align*}
Hence
\begin{align*}
\int_\mathbb{R}|u^\epsilon(x,t)-u^\epsilon(x,s)|dx\leq \frac{1}{2}M_1^3|t-s|.
\end{align*}

Similarly, we can obtain
\begin{align*}
\int_\mathbb{R}|u_x^\epsilon(x,t)-u_x^\epsilon(x,s)|dx\leq M_1^3 |t-s|.
\end{align*}

The rest results can be obtained by using \cite[Theorem 2.4,2.6]{Bressan}.

\end{proof}

\subsection{Weak consistency and convergence theorem}
In this subsection, we show that $u^\epsilon$ defined by \eqref{umepsilon} is weak consistent with the mCH equation \eqref{mCH}-\eqref{initial m}.

We rewrite \eqref{mCH} as equation of $u$,
\begin{align*}
&\quad(1-\partial_{xx})u_t+[(u^2-u^2_x)(u-u_{xx})]_x\\
&=(1-\partial_{xx})u_t+(u^3+uu_x^2)_x-\frac{1}{3}(u^3)_{xxx}+\frac{1}{3}(u^3_x)_{xx}=0.
\end{align*}

Now, we introduce the definition of weak solution in terms of $u$. To this end, for  $\phi\in C_c^\infty(\mathbb{R}\times[0,T))$, we denote the functional
\begin{align}\label{weakfunctional}
\mathcal{L}(u,\phi):&=\int_0^T\int_{\mathbb{R}}u(x,t)[\phi_t(x,t)-\phi_{txx}(x,t)]dxdt\nonumber\\
&\quad-\frac{1}{3}\int_0^T\int_{\mathbb{R}}u^3_x(x,t)\phi_{xx}(x,t)dxdt-\frac{1}{3}\int_0^T\int_{\mathbb{R}}u^3(x,t)\phi_{xxx}(x,t)dxdt\nonumber\\
&\quad+\int_0^T\int_{\mathbb{R}}(u^3+uu_x^2)\phi_x(x,t)dxdt.
\end{align}
Then, the definition of the weak solution to \eqref{mCH} in terms of $u(x,t)$ is given as follows.
\begin{definition}\label{weaksolution}
For $m_0\in\mathcal{M}(\mathbb{R})$, a function
$$u\in C([0,T);H^1(\mathbb{R}))\cap L^\infty(0,T; W^{1,\infty}(\mathbb{R}))$$
 is said to be a weak solution of \eqref{mCH}-\eqref{initial m} if
$$\mathcal{L}(u,\phi)=-\int_{\mathbb{R}}\phi(x,0)dm_0(x)$$
holds for all $\phi\in C_c^\infty(\mathbb{R}\times[0,T))$.  If $T=+\infty$, we call $u(x,t)$ as a global weak solution of the mCH equation.
\end{definition}

For simplicity in notations, we denote
$$\langle f(x,t), g(x,t)\rangle:=\int_0^T\int_{\mathbb{R}}f(x,t)g(x,t)dxdt.$$
For any test function $\phi\in C_c^\infty(\mathbb{R}\times[0,T))$, we have
\begin{align}\label{consistence}
&\quad\langle m_\epsilon(x,t),\phi_t\rangle+\langle U^\epsilon m_\epsilon,\phi_x\rangle\nonumber\\
&=\int_0^T\int_{\mathbb{R}}\phi_t(x,t)m_\epsilon(dx,t)dt+\int_0^T\int_{\mathbb{R}}U^\epsilon(x,t)\phi_x(x,t)m_\epsilon(dx,t)dt\nonumber\\
&=\int_0^T\int_{-L}^L\Big[\phi_t(X^\epsilon(\theta,t),t)+U^\epsilon(X^\epsilon(\theta,t),t)\phi_x(X^\epsilon(\theta,t),t)\Big]dm_0 (\theta) dt\nonumber\\
&=\int_0^T\frac{d}{dt}\int_{-L}^L\phi(X^\epsilon(\theta,t),t)dm_0(\theta)  dt=-\int_{-L}^L\phi(\theta,0)dm_0(\theta).
\end{align}
On the other hand, combining  the definition  \eqref{umepsilon}  and \eqref{weakfunctional} gives
\begin{align*}
\mathcal{L}(u^\epsilon,\phi)&=\int_0^T\int_{\mathbb{R}}u^\epsilon[\phi_t-\phi_{txx}]dxdt-\frac{1}{3}\int_0^T\int_{\mathbb{R}}(\partial_xu^\epsilon)^3\phi_{xx}dxdt\\
&\qquad\qquad-\frac{1}{3}\int_0^T\int_{\mathbb{R}}(u^\epsilon)^3\phi_{xxx}dxdt+\int_0^T\int_{\mathbb{R}}((u^\epsilon)^3+u^\epsilon(u_x^\epsilon)^2)\phi_xdxdt\\
&=\langle\phi_t,(1-\partial_{xx})u^\epsilon\rangle+\langle[(u^\epsilon)^2-(\partial_xu^\epsilon)^2](1-\partial_{xx})u^\epsilon,\phi_x\rangle\\
&=\langle m^\epsilon,\phi_t\rangle+\langle  U_\epsilon m^\epsilon, \phi_x\rangle.
\end{align*}
Combining the last two equalities, we define
\begin{align}\label{Eepsilon}
E_\epsilon:=\langle m^\epsilon-m_\epsilon,\phi_t\rangle+\langle  U_\epsilon m^\epsilon-U^\epsilon m_\epsilon, \phi_x\rangle=\mathcal{L}(u^\epsilon,\phi)+\int_{\mathbb{R}}\phi(x,0)dm_0(x).
\end{align}
We now state the main result of this section.
\begin{proposition}\label{consistency}
We have the following estimate
$$|E_\epsilon|\leq C\epsilon.$$
The constant $C$ is independent of $\epsilon.$
\end{proposition}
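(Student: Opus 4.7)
The plan is to split $E_\epsilon$ from \eqref{Eepsilon} into the time-derivative piece $\langle m^\epsilon-m_\epsilon,\phi_t\rangle$ and the space-derivative piece $\langle U_\epsilon m^\epsilon-U^\epsilon m_\epsilon,\phi_x\rangle$ and estimate each separately. In both, the crucial input is the identity $m^\epsilon=\rho_\epsilon\ast m_\epsilon$ from \eqref{relationm} together with the symmetry $\rho_\epsilon(-x)=\rho_\epsilon(x)$; the strategy is to fold every $\rho_\epsilon$ onto the smooth test function $\phi$ rather than onto $U_\epsilon$ or $m_\epsilon$, so that the error reduces to a standard mollifier remainder paired with the bounded-total-variation measure $m_\epsilon$.

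For the first piece, I would use Fubini and the symmetry of $\rho_\epsilon$ to rewrite
$$\langle m^\epsilon-m_\epsilon,\phi_t\rangle=\int_0^T\!\!\int_{\mathbb{R}}\bigl[(\rho_\epsilon\ast\phi_t)(y,t)-\phi_t(y,t)\bigr]\,dm_\epsilon(y,t)\,dt,$$
and then apply the elementary pointwise bound $|\rho_\epsilon\ast\phi_t-\phi_t|\le\epsilon\|\phi_{tx}\|_{L^\infty}$, which follows from Taylor expansion in $x$ and $\mathrm{supp}\,\rho_\epsilon\subset(-\epsilon,\epsilon)$. Since $m_\epsilon(\cdot,t)=X^\epsilon(\cdot,t)\#m_0$, its total variation is bounded by $|m_0|(\mathbb{R})=M_1$ for every $t$, so this piece is $O(\epsilon M_1 T\|\phi_{tx}\|_{L^\infty})$.

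For the second piece, I would change variables in both summands so that everything is integrated against $dm_\epsilon$; the two resulting expressions differ only in where the $\phi_x$ factor sits with respect to the $\rho_\epsilon$-convolution, and subtracting gives
$$\langle U_\epsilon m^\epsilon-U^\epsilon m_\epsilon,\phi_x\rangle=\int_0^T\!\!\int_{\mathbb{R}}\!\!\int_{\mathbb{R}}\rho_\epsilon(z-y)\,U_\epsilon(z,t)\,[\phi_x(z,t)-\phi_x(y,t)]\,dz\,dm_\epsilon(y,t)\,dt.$$
Because $\rho_\epsilon(z-y)$ forces $|z-y|<\epsilon$, the bracket is at most $\epsilon\|\phi_{xx}\|_{L^\infty}$, while Lemma \ref{uepsilonlemma}(i) gives $\|U_\epsilon\|_{L^\infty}\le M_1^2/2$; combined with $|m_\epsilon|(\mathbb{R})\le M_1$ and integration over $[0,T]$, this produces the bound $\tfrac12\epsilon M_1^3 T\|\phi_{xx}\|_{L^\infty}$. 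Adding the two estimates yields $|E_\epsilon|\le C\epsilon$ with $C$ depending only on $M_1$, $T$ and the $C^2$-norm of $\phi$.

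The main trap to avoid is the naive decomposition $U_\epsilon m^\epsilon-U^\epsilon m_\epsilon=(U_\epsilon-U^\epsilon)m^\epsilon+U^\epsilon(m^\epsilon-m_\epsilon)$: here $\|U_\epsilon-U^\epsilon\|_{L^\infty}$ is only $O(1)$, because $u^\epsilon_{xx}=u^\epsilon-m^\epsilon$ forces the Lipschitz constant of $u^\epsilon_x$ to be $O(1/\epsilon)$, and likewise $\|m^\epsilon\|_{L^\infty}=O(1/\epsilon)$, so neither factor is small enough to give $O(\epsilon)$ without cancellation. Keeping both $\rho_\epsilon$-convolutions inside a single integral against $dm_\epsilon$, so that they combine into a difference of $\phi_x$ at nearby points, is what lets the estimate close using only $L^\infty$ control on $U_\epsilon$ and total-variation control on $m_\epsilon$.
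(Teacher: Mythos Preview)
Your proof is correct and follows essentially the same approach as the paper: both terms are handled by writing $m^\epsilon=\rho_\epsilon\ast m_\epsilon$, using the evenness of $\rho_\epsilon$ to transfer the convolution onto the test function, and then reading the resulting difference $\phi_t(z,t)-\phi_t(y,t)$ (resp.\ $\phi_x(z,t)-\phi_x(y,t)$) as $O(\epsilon)$ on the support of $\rho_\epsilon(z-y)$, against a measure of total mass $M_1$. The only cosmetic difference is that the paper pulls $m_\epsilon$ back to $m_0$ via $X^\epsilon(\cdot,t)\#m_0$ before writing the integrals, whereas you work directly with $m_\epsilon$; the computations and final constants are identical.
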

\begin{proof}
By the definition of $m^\epsilon$ and $m_\epsilon$, the first term in \eqref{Eepsilon} can be estimated as
\begin{align*}
\langle m^\epsilon-m_\epsilon,\phi_t\rangle&=\int_0^T\bigg(\int_{\mathbb{R}}\phi_t(x,t)m^\epsilon(x,t)dx-\int_{\mathbb{R}}\phi_t(x,t)m_\epsilon(dx,t)\bigg)dt\\
&=\int_0^T\bigg(\int_{\mathbb{R}}\int_{\mathbb{R}}\phi_t(x,t)\rho_\epsilon(x-y)m_\epsilon(dy,t)dx-\int_{\mathbb{R}}\phi_t(y,t)m_\epsilon(dy,t)\bigg)dt\\
&=\int_0^T\bigg(\int_{\mathbb{R}}\int_{\mathbb{R}}\big[\phi_t(x,t)-\phi_t(y,t)\big]\rho_\epsilon(x-y)m_\epsilon(dy,t)dx\bigg)dt\\
&=\int_0^T\bigg(\int_{\mathbb{R}}\int_{-L}^L\big[\phi_t(x,t)-\phi_t(X^\epsilon(\theta,t),t)\big]\rho_\epsilon(x-X^\epsilon(\theta,t))dm_0(\theta)dx\bigg)dt\\
&\leq M_1||\phi_{tx}||_{L^\infty}T\epsilon.
\end{align*}
For the second term of \eqref{Eepsilon}, because $\rho_\epsilon$ is an even function, by the definition of $U^\epsilon$ we can obtain
\begin{align*}
&\quad\langle  U_\epsilon m^\epsilon-U^\epsilon m_\epsilon, \phi_x\rangle\\
&=\int_0^T\int_{\mathbb{R}}\int_{-L}^LU_\epsilon(x,t)\phi_x(x,t)\rho_\epsilon(x-X^\epsilon(\theta,t))dm_0(\theta)dxdt\\
&\qquad\qquad\qquad-\int_0^T\int_{-L}^LU^\epsilon(X^\epsilon(\theta,t),t)\phi_x(X^\epsilon(\theta,t),t)dm_0(\theta)dt\\
&=\int_0^T\int_{\mathbb{R}}\int_{-L}^LU_\epsilon(x,t)\phi_x(x,t)\rho_\epsilon(x-X^\epsilon(\theta,t))dm_0(\theta)dxdt\\
&\qquad\qquad\qquad-\int_0^T\int_{\mathbb{R}}\int_{-L}^LU_\epsilon(x,t)\rho_\epsilon(x-X^\epsilon(\theta,t))\phi_x(X^\epsilon(\theta,t),t)dm_0(\theta)dxdt\\
&\leq M_1||U_\epsilon||_{L^\infty}||\phi_{xx}||_{L^\infty}T\epsilon\leq \frac{1}{2}M_1^3||\phi_{xx}||_{L^\infty}T\epsilon.
\end{align*}
 This ends the proof.

\end{proof}

Next, we state our main theorem in this section,  which contains Theorem \ref{maintheorem4}.
\begin{theorem}\label{globalweaksolution}
Assume that initial data $m_0\in\mathcal{M}(\mathbb{R})$ satisfies \eqref{initialRadon}. $u^\epsilon(x,t)$ and $m^\epsilon(x,t)$ are defined by \eqref{umepsilon}. Then, the limit function $u$ given by Lemma \ref{uepsilonlemma} is a global weak solution of the mCH equation \eqref{mCH}-\eqref{initial m} and
$$u\in C([0,+\infty);H^1(\mathbb{R}))\cap L^\infty(0,+\infty; W^{1,\infty}(\mathbb{R})).$$
Furthermore, for any $T>0$, we have
$$u\in BV(\mathbb{R}\times[0,T));\quad u_x\in BV(\mathbb{R}\times[0,T)),$$
$$m:=(1-\partial_{xx})u\in \mathcal{M}(\mathbb{R}\times[0,T)),$$
and there exists subsequence of $m^\epsilon$ (also labelled as $m^\epsilon$) such that
$$m^\epsilon\stackrel{\ast}{\rightharpoonup} m\textrm{ in } \mathcal{M}(\mathbb{R}\times[0,T))~\textrm{ as }\epsilon\rightarrow0.$$

\end{theorem}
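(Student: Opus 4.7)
The approach is to pass to the limit $\epsilon\to 0$ in the regularized solutions, exploiting the strong $L^1_{loc}$ compactness from Lemma \ref{uepsilonlemma} together with the weak consistency from Proposition \ref{consistency}. The argument splits naturally into three steps.

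\emph{Step 1 (compactness and regularity of the limit).} By Lemma \ref{uepsilonlemma}, pick subsequences with $u^\epsilon \to u$ and $u_x^\epsilon \to u_x$ in $L^1_{loc}(\mathbb{R}\times[0,\infty))$, and pass to a further subsequence converging a.e. The distributional identity $\partial_x u^\epsilon = u_x^\epsilon$ passes to the limit, so the second limit really is $\partial_x u$. Lower semicontinuity of the $L^\infty$ and BV norms gives $u,u_x\in L^\infty\cap BV(\mathbb{R}\times[0,T))$ with the same constants. The finite speed estimate $|\dot X^\epsilon|\leq \tfrac{1}{2}M_1^2$ combined with $\mathrm{supp}\,m_0\subset(-L,L)$ confines $m_\epsilon(\cdot,t)$ to a uniformly bounded interval over $[0,T]$; since $u^\epsilon=G^\epsilon*m_\epsilon$, this forces uniform exponential decay in $x$, hence a uniform $L^\infty(0,T;H^1)$ bound on $u^\epsilon$. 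Interpolating the time-Lipschitz $L^1$ estimates of Lemma \ref{uepsilonlemma}(iii) against the $L^\infty$ bounds yields uniform Lipschitz-in-$L^2$ bounds, so that $u\in\mathrm{Lip}([0,T];L^2)\cap L^\infty(0,T;H^1)$, and a Strauss-type weak-continuity argument plus $H^1$-norm conservation gives $u\in C([0,\infty);H^1)\cap L^\infty(0,\infty;W^{1,\infty})$.

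\emph{Step 2 (weak formulation).} For any $\phi\in C_c^\infty(\mathbb{R}\times[0,T))$, Proposition \ref{consistency} gives
$$\mathcal{L}(u^\epsilon,\phi)+\int_{\mathbb{R}}\phi(x,0)\,dm_0(x)=E_\epsilon,\qquad |E_\epsilon|\leq C\epsilon.$$
The linear terms in $\mathcal{L}$ pass to the limit by $L^1_{loc}$ convergence and compact support of $\phi$, while the cubic terms $(u_x^\epsilon)^3\phi_{xx}$, $(u^\epsilon)^3\phi_{xxx}$, and $((u^\epsilon)^3+u^\epsilon(u_x^\epsilon)^2)\phi_x$ pass by Lebesgue dominated convergence: the a.e. convergence of $u^\epsilon, u_x^\epsilon$, the uniform bound $\tfrac{1}{2}M_1$ on $\|u^\epsilon\|_{L^\infty}$ and $\|u_x^\epsilon\|_{L^\infty}$, and the compact support of $\phi$ together supply both the pointwise convergence of the integrands and an integrable dominant. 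This yields $\mathcal{L}(u,\phi)=-\int\phi(x,0)\,dm_0(x)$, proving that $u$ is a global weak solution in the sense of Definition \ref{weaksolution}.

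\emph{Step 3 (identification of $m$ and weak-$*$ convergence).} Since $u_x(\cdot,t)\in BV(\mathbb{R})$, its distributional derivative $u_{xx}$ is a locally finite signed measure, so $m:=u-u_{xx}\in\mathcal{M}(\mathbb{R}\times[0,T))$. For $\phi\in C_c^\infty(\mathbb{R}\times[0,T))$, integration by parts and $L^1_{loc}$ convergence give
$$\int\phi\,m^\epsilon\,dx\,dt=\int\big(u^\epsilon\phi+u_x^\epsilon\phi_x\big)\,dx\,dt\longrightarrow\int\big(u\phi+u_x\phi_x\big)\,dx\,dt=\langle m,\phi\rangle,$$
and the uniform total-variation bound $|m^\epsilon|(\mathbb{R})\leq M_1$ (inherited from $|m_\epsilon|(\mathbb{R})=M_1$ via Young's inequality) extends the convergence to all $\phi\in C_0$ by density, establishing $m^\epsilon\stackrel{*}{\rightharpoonup} m$ in $\mathcal{M}(\mathbb{R}\times[0,T))$.

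\emph{Main obstacle.} The one step not essentially automatic is upgrading weak $H^1$ time-continuity of $u$ to strong continuity in Step 1; this needs the formal $H^1$-conservation of the mCH equation to survive the regularization, which requires a careful commutator estimate to show that testing the regularized equation against $u^\epsilon$ produces an error vanishing with $\epsilon$. Everything else follows from strong $L^1_{loc}$ convergence and the uniform $L^\infty$ bounds already in hand.
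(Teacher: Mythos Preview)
Your overall architecture matches the paper's: extract compactness from Lemma \ref{uepsilonlemma}, pass to the limit in $\mathcal{L}(u^\epsilon,\phi)$ using the uniform $L^\infty$ bounds and Proposition \ref{consistency}, then identify $m$ via integration by parts. Steps 2 and 3 are essentially identical to the paper's argument.

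The one real discrepancy is your ``main obstacle.'' You claim that obtaining $u\in C([0,\infty);H^1)$ requires $H^1$-norm conservation surviving the regularization plus a commutator estimate. In fact this step is entirely elementary, and the paper dispatches it in two lines: the interpolation you already apply to $u$ (time-Lipschitz in $L^1$ against the $L^\infty$ bound to get time-Lipschitz in $L^2$) applies \emph{verbatim} to $u_x$, because Lemma \ref{uepsilonlemma}(iii) supplies the same time-Lipschitz $L^1$ estimate for $u_x^\epsilon$. Concretely,
\[
\|u_x(\cdot,t)-u_x(\cdot,s)\|_{L^2}^2 \le \|u_x(\cdot,t)-u_x(\cdot,s)\|_{L^\infty}\,\|u_x(\cdot,t)-u_x(\cdot,s)\|_{L^1} \le M_1\cdot M_1^3|t-s|,
\]
and combining this with the analogous bound for $u$ gives $\|u(\cdot,t)-u(\cdot,s)\|_{H^1}^2 \le C|t-s|$ directly. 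No Strauss argument, no conservation law, no commutator estimate. You had all the ingredients in hand and took an unnecessary detour; the route you propose would probably work, but it introduces genuine extra labor where none is needed.
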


\begin{proof}
Step 1. Global weak solution.

As it is shown in Lemma \ref{uepsilonlemma}, we have $u,~u_x\in BV(\mathbb{R}\times[0,T))$ such that
\begin{align*}
u^\epsilon\rightarrow u, \quad \partial_xu^\epsilon\rightarrow u_x\textrm{ in } L_{loc}^1(\mathbb{R}\times[0,+\infty)).
\end{align*}
Moreover, for any $T>0$, the limit functions $u,u_x$ satisfy
$$u\in BV(\mathbb{R}\times[0,T)),\quad u_x\in BV(\mathbb{R}\times[0,T)),$$
$$|u(x,t)|\leq\frac{1}{2}M_1,\quad |u_x(x,t)|\leq\frac{1}{2}M_1$$
and
\begin{align*}
\int_{\mathbb{R}}|u(x,t)-u(x,s)|dx\leq \frac{1}{2}M_1^3|t-s|,~\int_{\mathbb{R}}|u_x(x,t)-u_x(x,s)|dx\leq M_1^3|t-s|
\end{align*}
for $t,s\in[0,+\infty)$. Hence,
\begin{align*}
||u(\cdot ,t)-u(\cdot ,s)||_{L^2}^2&=\int_{\mathbb{R}}|u(x,t)-u(x,s)|^2dx\\
&\leq M_1\int_{\mathbb{R}}|u(x,t)-u(x,s)|dx\leq\frac{1}{2}M_1^4|t-s|.
\end{align*}
Similarly, we have
$$||u_x(\cdot ,t)-u_x(\cdot ,s)||_{L^2}^2\leq M_1^4|t-s|.$$
These two inequalities imply
\begin{align*}
||u(\cdot ,t)-u(\cdot ,s)||_{H^1}^2&\leq 2\Big(||u(\cdot ,t)-u(\cdot ,s)||_{L^2}^2+||u_x(\cdot ,t)-u_x(\cdot ,s)||_{L^2}^2\Big)\\
&\leq 3M_1^4|t-s|.
\end{align*}
Therefore
$$u\in C([0,+\infty);H^1(\mathbb{R}))\cap L^\infty(0,+\infty; W^{1,\infty}(\mathbb{R})).$$

For each $\phi\in C_c^\infty(\mathbb{R}\times[0,+\infty))$, there exists $T=T(\phi)$ such that $\phi\in C_c^\infty(\mathbb{R}\times[0,T))$
We now consider convergence for each term of $\mathcal{L}(u^\epsilon,\phi)$,
\begin{align*}
\mathcal{L}(u^\epsilon,\phi)&=\int_0^T\int_{\mathbb{R}}u^\epsilon[\phi_t-\phi_{txx}]dxdt-\frac{1}{3}\int_0^T\int_{\mathbb{R}}(\partial_xu^\epsilon)^3\phi_{xx}dxdt\\
&\quad-\frac{1}{3}\int_0^T\int_{\mathbb{R}}(u^\epsilon)^3\phi_{xxx}dxdt+\int_0^T\int_{\mathbb{R}}((u^\epsilon)^3+u^\epsilon(\partial_xu^\epsilon)^2)\phi_xdxdt.
\end{align*}

For the first term, because supp$\{\phi\}$ is compact,  we can see
\begin{align*}
\int_0^T\int_{\mathbb{R}}u^\epsilon[\phi_t-\phi_{txx}]dxdt\rightarrow \int_0^T\int_{\mathbb{R}}u[\phi_t-\phi_{txx}]dxdt\quad(\epsilon\rightarrow0).
\end{align*}
The second term can be estimated as follows
\begin{align*}
&\quad\int_0^T\int_{\mathbb{R}}[(\partial_xu^\epsilon)^3-u_x^3]\phi_{xx}dxdt\\
&=\int_0^T\int_{\mathbb{R}}(\partial_xu^\epsilon-u_x)[(\partial_xu^\epsilon)^2+u_x^2+\partial_xu^\epsilon u_x]\phi_{xx}dxdt\\
&\leq \frac{3}{4}M_1^2||\phi_{xx}||_{L^\infty}\int_{\mathrm{supp}\{\phi\}}|\partial_xu^\epsilon-u_x|dxdt\rightarrow0\quad (\epsilon\rightarrow0).
\end{align*}
Similarly, we obtain
\begin{align*}
\int_0^T\int_{\mathbb{R}}[(u^\epsilon)^3-u^3]\phi_{xxx}dxdt\rightarrow0\quad(\epsilon\rightarrow0),\\
\int_0^T\int_{\mathbb{R}}[(u^\epsilon)^3-u^3]\phi_{x}dxdt\rightarrow0\quad(\epsilon\rightarrow0),
\end{align*}
and
\begin{align*}
&\quad\int_0^T\int_{\mathbb{R}}[u^\epsilon(\partial_xu^\epsilon)^2-uu_x^2]\phi_xdxdt\\
&=\int_0^T\int_{\mathbb{R}}[(u^\epsilon-u)(\partial_xu^\epsilon)^2+u((\partial_xu^\epsilon)^2-u_x^2)]\phi_xdxdt\\
&=\int_0^T\int_{\mathbb{R}}[(u^\epsilon-u)(\partial_xu^\epsilon)^2+u(\partial_xu^\epsilon+u_x)(\partial_xu^\epsilon-u_x)]\phi_xdxdt\\
&\rightarrow0\quad(\epsilon\rightarrow0).
\end{align*}
Combining the above estimates and Proposition \ref{consistency} gives
$$\mathcal{L}(u,\phi)=-\int_{\mathbb{R}}\phi(x,0)dm_0(x).$$
This proves that $u$ is a global weak solution to the mCH equation.

Step 2.  Now we prove that
$$m^\epsilon\stackrel{\ast}{\rightharpoonup} m\textrm{ in } \mathcal{M}(\mathbb{R}\times[0,T))\quad (\epsilon\rightarrow0).$$

For any test function $\phi\in C_c^1(\mathbb{R}\times[0,T))$, integrating by parts and using the relationship $m^\epsilon=(1-\partial_{xx})u^\epsilon$ imply that
\begin{align*}
\int_0^T\int_{\mathbb{R}}\phi(x,t)dm^\epsilon(x,t)=\int_0^T\int_{\mathbb{R}}\phi(x,t)(1-\partial_{xx})u^\epsilon(x,t)dxdt\\
=\int_0^T\int_{\mathbb{R}}\phi(x,t)u^\epsilon(x,t)+\phi_x(x,t)\partial_xu^\epsilon(x,t)dxdt.
\end{align*}
Taking $\epsilon\rightarrow0$, the right hand side of the above equality converges to
$$\int_0^T\int_{\mathbb{R}}\phi(x,t)u(x,t)+\phi_x(x,t)u_x(x,t)dxdt=\int_0^T\int_{\mathbb{R}}\phi(x,t)m(dx,dt).$$
Hence, $m^{\epsilon}\stackrel{\ast}{\rightharpoonup} m\textrm{ in } \mathcal{M}(\mathbb{R}\times[0,T))$. This ends the proof.

\end{proof}

\begin{remark}
In \cite{GaoLiu}, the authors also provee the total variation stability of $m(\cdot,t)$. That is 
$$|m(\cdot,t)|(\mathbb{R})\leq |m_0|(\mathbb{R}).$$
The weak solution is unique when $u\in L^\infty(0,\infty;W^{2,1}(\mathbb{R})).$ Moreover, examples about nonuniqueness of peakon weak solutions can also be found in \cite{GaoLiu}.
Notice that peakon solutions are not in the solution class $W^{2,1}(\mathbb{R})$.
\end{remark}

\newpage

\bibliographystyle{plain}
\bibliography{bibofLagrangemCH}

\end{document}